\let\origsection=\section \def\section{\@ifstar{\origsection*}{\mysection}}
\def\mysection{\@startsection{section}{1}\z@{.7\linespacing\@plus\linespacing}{.5\linespacing}{\normalfont\scshape\centering\S}}
\renewcommand{\PrintDOI}[1]{\doi{#1}}
\numberwithin{equation}{section}
\numberwithin{figure}{section}
\def\rmlabel{\upshape({\itshape \roman*\,})}
\def\alabel{\upshape({\itshape \alph*\,})}
\def\nlabel{\upshape({\itshape \arabic*\,})}
\let\polishlcross=\l
\def\l{\ifmmode\ell\else\polishlcross\fi}
\def\qand{\quad\text{and}\quad}
\def\qqand{\qquad\text{and}\qquad}
\let\emptyset=\varnothing
\let\setminus=\smallsetminus
\def\moverlay{\mathpalette\mov@rlay}
\def\mov@rlay#1#2{\leavevmode\vtop{   \baselineskip\z@skip \lineskiplimit-\maxdimen
   \ialign{\hfil$\m@th#1##$\hfil\cr#2\crcr}}}
\newcommand{\charfusion}[3][\mathord]{
    #1{\ifx#1\mathop\vphantom{#2}\fi
        \mathpalette\mov@rlay{#2\cr#3}
      }
    \ifx#1\mathop\expandafter\displaylimits\fi}
\newcommand{\dcup}{\charfusion[\mathbin]{\cup}{\cdot}}
\DeclareFontFamily{U}  {MnSymbolC}{}
\DeclareSymbolFont{MnSyC}         {U}  {MnSymbolC}{m}{n}
\DeclareFontShape{U}{MnSymbolC}{m}{n}{
    <-6>  MnSymbolC5
   <6-7>  MnSymbolC6
   <7-8>  MnSymbolC7
   <8-9>  MnSymbolC8
   <9-10> MnSymbolC9
  <10-12> MnSymbolC10
  <12->   MnSymbolC12}{}
\DeclareMathSymbol{\powerset}{\mathord}{MnSyC}{180}
\newcommand{\qedge}[7]{

	\ifx\relax#4\relax
		\def\qoffs{0pt}
	\else
		\def\qoffs{#4}
	\fi

	\def\qhedge{
		($#1+#3!\qoffs!-90:#2-#3$) --
		($#2+#1!\qoffs!-90:#3-#1$) --
		($#3+#2!\qoffs!-90:#1-#2$) -- cycle}

	\coordinate (12) at ($#1!\qoffs!90:#2$);
	\coordinate (13) at ($#1!\qoffs!-90:#3$);
	\coordinate (23) at ($#2!\qoffs!90:#3$);
	\coordinate (21) at ($#2!\qoffs!-90:#1$);
	\coordinate (31) at ($#3!\qoffs!90:#1$);
	\coordinate (32) at ($#3!\qoffs!-90:#2$);
	
	\def\nqhedge{
		(13) let \p1=($(13)-#1$), \p2=($(12)-#1$) in
			arc[start angle={atan2(\y1,\x1)}, delta angle={atan2(\y2,\x2)-atan2(\y1,\x1)-360*(atan2(\y2,\x2)-atan2(\y1,\x1)>0)}, x radius=\qoffs, y radius=\qoffs] --
		(21) let \p1=($(21)-#2$), \p2=($(23)-#2$) in
			arc[start angle={atan2(\y1,\x1)}, delta angle={atan2(\y2,\x2)-atan2(\y1,\x1)-360*(atan2(\y2,\x2)-atan2(\y1,\x1)>0)}, x radius=\qoffs, y radius=\qoffs] --
		(32) let \p1=($(32)-#3$), \p2=($(31)-#3$) in
			arc[start angle={atan2(\y1,\x1)}, delta angle={atan2(\y2,\x2)-atan2(\y1,\x1)-360*(atan2(\y2,\x2)-atan2(\y1,\x1)>0)}, x radius=\qoffs, y radius=\qoffs] --
		cycle}

		\ifx\relax#5\relax
		\def\qlwidth{1pt}
	\else
		\def\qlwidth{#5}
	\fi
	
		\ifx\relax#7\relax
		\fill \nqhedge;
	\else
		\fill[#7]\nqhedge;
	\fi

		\ifx\relax#6\relax
		\draw[line width=\qlwidth,rounded corners=\qoffs]\nqhedge;
	\else
		\draw[line width=\qlwidth,#6]\nqhedge;
	\fi
}
\let\epsilon=\varepsilon
\let\eps=\epsilon
\let\rho=\varrho
\let\theta=\vartheta
\let\wh=\widehat
\def\EE{{\mathds E}}
\def\NN{{\mathds N}}
\def\PP{{\mathds P}}
\def\RR{{\mathds R}}
\newcommand{\cA}{\mathcal{A}}
\newcommand{\cF}{\mathcal{F}}
\newcommand{\cG}{\mathcal{G}}
\newcommand{\cR}{\mathcal{R}}
\newcommand{\cS}{\mathcal{S}}
\newcommand{\cX}{\mathcal{X}}
\newcommand{\ccB}{\mathscr{B}}
\newcommand{\ccC}{\mathscr{C}}
\newcommand{\ccP}{\mathscr{P}}
\newcommand{\gS}{\mathfrak{S}}
\newtheoremstyle{note}  {4pt}  {4pt}  {\sl}  {}  {\bfseries}  {.}  {.5em}          {}
\newtheoremstyle{introthms}  {3pt}  {3pt}  {\itshape}  {}  {\bfseries}  {.}  {.5em}          {\thmnote{#3}}
\newtheoremstyle{remark}  {2pt}  {2pt}  {\rm}  {}  {\bfseries}  {.}  {.3em}          {}
\theoremstyle{plain}
\newtheorem{theorem}{Theorem}[section]
\newtheorem{lemma}[theorem]{Lemma}
\newtheorem{prop}[theorem]{Proposition}
\newtheorem{cor}[theorem]{Corollary}
\newtheorem{fact}[theorem]{Fact}
\newtheorem{claim}[theorem]{Claim}
\theoremstyle{note}
\newtheorem{dfn}[theorem]{Definition}
\newtheorem{setup}[theorem]{Setup}
\theoremstyle{remark}
\newtheorem{exmpl}[theorem]{Example}
\newcommand{\seq}[1]{\accentset{\rightharpoonup}{#1}}
\newcommand*\patchAmsMathEnvironmentForLineno[1]{\expandafter\let\csname old#1\expandafter\endcsname\csname #1\endcsname
\expandafter\let\csname oldend#1\expandafter\endcsname\csname end#1\endcsname
\renewenvironment{#1}{\linenomath\csname old#1\endcsname}{\csname oldend#1\endcsname\endlinenomath}}\newcommand*\patchBothAmsMathEnvironmentsForLineno[1]{\patchAmsMathEnvironmentForLineno{#1}\patchAmsMathEnvironmentForLineno{#1*}}\AtBeginDocument{\patchBothAmsMathEnvironmentsForLineno{equation}\patchBothAmsMathEnvironmentsForLineno{align}\patchBothAmsMathEnvironmentsForLineno{flalign}\patchBothAmsMathEnvironmentsForLineno{alignat}\patchBothAmsMathEnvironmentsForLineno{gather}\patchBothAmsMathEnvironmentsForLineno{multline}}
\def\odd{\text{\rm odd}}
\def\even{\text{\rm even}}
\def\flex{\text{\rm flex}}
\def\Ubad{U_\text{\rm bad}}
\begin{document}

\title[Minimum degree condition for tight Hamiltonian cycles in hypergraphs]{Minimum vertex degree condition for tight Hamiltonian cycles in $3$-uniform hypergraphs}

\dedicatory{Dedicated to the memory of Andr\'as Hajnal}

\author[Chr.~Reiher]{Christian Reiher}
\address{Fachbereich Mathematik, Universit\"at Hamburg, Hamburg, Germany}
\email{Christian.Reiher@uni-hamburg.de}
\email{schacht@math.uni-hamburg.de}

\author[V.~R\"{o}dl]{Vojt\v{e}ch R\"{o}dl}
\address{Department of Mathematics and Computer Science,
Emory University, Atlanta, USA}
\email{rodl@mathcs.emory.edu}
\thanks{The second author is supported by NSF grant DMS 1301698.}

\author[A.~Ruci\'nski]{Andrzej Ruci\'nski}
\address{A. Mickiewicz University, Department of Discrete Mathematics, Pozna\'n, Poland}
\email{rucinski@amu.edu.pl}
\thanks{The third author is supported by Polish NSC grant 2014/15/B/ST1/01688.}

\author[M.~Schacht]{Mathias Schacht}

\author[E.~Szemer\'edi]{Endre Szemer\'edi}
\address{Alfr\'ed R\'enyi Institute of Mathematics,
	 Hungarian Academy of Sciences,
	 Budapest, Hungary}
\email{szemered@cs.rutgers.edu}
\thanks{The fifth author is supported by ERC Advanced Grant 321104.}

\subjclass[2010]{Primary: 05C65. Secondary: 05C45}
\keywords{hypergraphs, Hamiltonian cycles, Dirac's theorem}

\begin{abstract}
We show that every 3-uniform hypergraph with $n$ vertices and minimum vertex degree at least
$(5/9+o(1))\binom{n}2$ contains a tight Hamiltonian cycle. Known lower bound
constructions show that this degree condition is asymptotically optimal.
\end{abstract}

\maketitle

\section{Introduction}
G.~A.~Dirac~\cite{dirac} proved that every graph $G=(V,E)$ on at least $3$ vertices and with minimum vertex degree $\delta(G)\geq |V|/2$ contains a
Hamiltonian cycle. This result is best possible, as there are graphs~$G$ with minimum
degree $\delta(G)=\big\lceil|V|/2\big\rceil -1$ not containing a Hamiltonian cycle.

We continue the study to which extent Dirac's theorem can be generalised to hypergraphs.
Here we shall restrict to $3$-uniform hypergraphs and if not mentioned otherwise by a hypergraph we
will mean a 3-uniform hypergraph. Note that in this case there
are at least two natural concepts of a minimum degree condition and several
notions of cycle, and we briefly introduce some of them below.

For a hypergraph $H=(V,E)$  and a vertex $v\in V$ we denote by $N_H(v)$ \emph{the neighbourhood of $v$} and by
$d_H(v)$  \emph{the degree of $v$} defined as
\[
	N_H(v)=\{e\in E\colon v\in e\}\qqand d_H(v)=\big|N_H(v)\big|\,.
\]
Let $\delta(H)=\min d_H(v)$ be the \emph{minimum vertex degree} of $H$ taken over all $v\in V$.
Similarly,  for any two vertices $u$, $v\in V$ we denote by $N_H(u,v)$ their \emph{pair neighbourhood} and by~$d_H(u,v)$ their
\emph{pair degree} defined by
\[
	N_H(u,v)=\{e\in E\colon u, v\in e\}\qqand d_H(u,v)=\big|N_H(u,v)\big|\,.
\]
Let $\delta_2(H)=\min d_H(u,v)$ be the \emph{minimum pair degree} over
all pairs of vertices of~$H$. We will sometimes skip the subscript and write $d(v)$, $N(v)$, $d(u,v)$, and $N(u,v)$ instead.

An early notion of cycles in hypergraphs appeared in the
work of Berge~\cite{berge} (see also~\cite{Bermond}) more than 40 years ago.
More recently,  Katona and Kierstead~\cite{KK} considered
the following types of paths and cycles.
A hypergraph $P$ is a \emph{tight path of length $\l$},
if $|V(P)|=\l+2$ and there is an ordering of the vertices $V(P)=\{x_1,\dots,x_{\l+2}\}$
such that a triple~$e$ forms a hyperedge of~$P$  if and only if $e=\{x_i,x_{i+1},x_{i+2}\}$
for some $i\in[\l]$. The ordered pairs $(x_1,x_2)$ and $(x_{\l+1},x_{\l+2})$
are the \emph{end-pairs} of~$P$ and we say that $P$ is a tight $(x_1,x_2)$-$(x_{\l+1},x_{\l+2})$ path. This
definition of end-pairs is not symmetric and implicitly fixes a direction on~$P$ and the order of the end-pairs.
Hence, we may refer to $(x_1,x_2)$ as the \emph{starting pair} and to $(x_{\l+1},x_{\l+2})$ as the \emph{ending pair}.
All other vertices of $P$ are called \emph{internal}.
We sometimes identify such a path~$P$ with the sequence of its vertices $x_1\dots x_{\l+2}$.
Moreover, a \emph{tight cycle~$C$ of length~$\l\geq 4$} consists
of a path $x_1\dots x_\l$ of length $\l-2$ and the two additional hyperedges
$\{x_{\l-1},x_\l,x_1\}$ and $\{x_{\l},x_1,x_2\}$. In both cases the \emph{length} of a tight cycle
and of a tight path is measured by the number of hyperedges and we will use the same convention
for the length of cycles, paths, and walks in graphs. For simplicity we denote
edges and hyperedges by $xy$ and $xyz$ instead of~$\{x,y\}$ and $\{x,y,z\}$.

Roughly speaking, one may think of tight paths
and cycles as ordered hypergraphs such that ``consecutive'' edges overlap in exactly
two vertices. Similarly, one may consider so-called \emph{loose} paths and cycles, where the overlap
is restricted to one vertex only. Given a hypergraph $H$, a  cycle, tight or loose, is called \emph{Hamiltonian} if it is a subhypergraph of $H$ passing through all the vertices of $H$. The optimal approximate minimum pair and vertex degree conditions
for the existence of loose Hamiltonian cycles were obtained in~\cites{KO,BHS} and precise versions
for large hypergraphs appeared in~\cites{CM,YiJie}.
Results on pair degree conditions implying tight Hamiltonian cycles were obtained in~\cites{rrs3,3}.
For minimum vertex degrees, $(5/9-o(1))n^2/2$ provides a lower bound
(see Examples~\ref{exmpl1}\,\ref{it:exmpl:1}\,--\,\ref{it:exmpl:3} below), which was conjectured to be optimal.
So far only suboptimal upper bounds were obtained in~\cites{GPW,1112,RRSSz}.
We close this gap here, as the following result provides an
asymptotically
optimal minimum vertex degree condition for tight Hamiltonian cycles.
\begin{theorem}	\label{thm:main}
	For every $\alpha>0$ there exists an integer $n_0$ such that every
	$3$-uniform hypergraph $H$ with $n\ge n_0$ vertices and with minimum vertex
	degree $\delta(H)\geq \bigl(\tfrac59+\alpha\bigr)\tfrac{n^2}{2}$ contains
	a tight Hamiltonian cycle.
\end{theorem}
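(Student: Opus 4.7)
The plan is to apply the absorbing method in the vertex-degree regime. I would first set aside a small random reservoir $R\subseteq V(H)$, then build an absorbing tight path $A$ on a sublinear number of vertices such that for every sufficiently small $X$ disjoint from $V(A)$ the vertex set $V(A)\cup X$ spans a tight path between the two end-pairs of $A$. Using a Tur\'an-type or regularity-based argument I would cover almost all vertices of $V(H)\setminus(V(A)\cup R)$ by a long tight path $P$. A connecting lemma through the reservoir would then splice the end-pairs of $A$ and $P$ into a single tight cycle, and the absorbing property of $A$ would finally mop up the residual vertices to produce the desired tight Hamilton cycle.

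The heart of the matter is the connecting lemma, namely the statement that any two prescribed ordered pairs can be joined by a short tight path through $R$. Under a pair-degree hypothesis, as in~\cite{rrs3}, this is comparatively clean; under the present vertex-degree hypothesis it is delicate, because pairs with codegree zero can exist and cannot be connected at all. One must therefore single out a class of \emph{good} end-pairs that are connectable, and show that an arbitrary end-pair can be converted into a good one by prepending or appending a bounded number of vertices.

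This motivates an extremal-versus-non-extremal dichotomy keyed to the lower-bound constructions that realise the bound $5/9$. In the non-extremal case I would apply a (weak) hypergraph regularity lemma, analyse the reduced structure on the cluster hypergraph, and exploit both the $5/9+\alpha$ edge density and the hypothesis that $H$ does not resemble any extremal example to prove a strong connecting lemma and an almost-covering lemma. In the extremal case, where $H$ is $\mu$-close to one of the extremal constructions for some $\mu\ll\alpha$, I would recover the approximate vertex partition, repair atypical vertices by local rerouting, and build the tight Hamilton cycle directly by following the partition pattern dictated by the relevant extremal family.

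The main obstacle I anticipate is the connecting lemma in the non-extremal case. Vertex degree alone gives no direct control over pair degrees, so one is forced to exploit the global structure of the link graphs simultaneously and prove a stability-type statement asserting that any pair can be turned into a connectable one unless $H$ already looks like an extremal example. Bridging this gap is essentially what it takes to move the bound from the suboptimal values obtained in~\cites{GPW,1112,RRSSz} down to the sharp $5/9$, and it is where I expect the bulk of the technical work to lie.
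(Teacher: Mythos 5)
Your high-level architecture (reservoir, absorbing path, almost-spanning path, connecting lemma, final absorption) matches the paper's, but the crux --- a connecting lemma valid under a vertex-degree hypothesis only --- is precisely the part you leave open, and the substitute you sketch for it would not work as stated. The paper's key idea, which is absent from your proposal, is to work inside the link graphs: since $\delta(H)\ge(\tfrac59+\alpha)\tfrac{n^2}2$, every link graph $L_v$ has density at least $\tfrac59+\alpha$ and therefore contains an induced $(\beta,\ell)$-robust subgraph $R_v$ on more than $(\tfrac23+\tfrac\alpha2)n$ vertices (Proposition~\ref{prop:robust}); because each $R_v$ spans more than two thirds of $V$ and retains almost all of the density, \emph{any two} of these robust subgraphs must share at least $\alpha n^2/2$ edges (Proposition~\ref{prop:intersect}). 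A pair is then declared connectable if it lies in $\Omega(n)$ of the graphs $R_v$, and the Connecting Lemma joins two such pairs by routing walks inside many robust link graphs that meet at a common ``middle'' edge guaranteed by the intersection property. This works uniformly for every $H$ satisfying the hypothesis; no regularity lemma and no extremal/non-extremal dichotomy are needed (the theorem is only asymptotic, so no stability analysis is required either). The long path is likewise obtained elementarily, via the Faudree--Schelp extension of the Erd\H{o}s--Gallai theorem applied inside the robust link subgraphs.

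Two concrete points where your plan would fail. First, your proposed repair step --- ``an arbitrary end-pair can be converted into a good one by prepending or appending a bounded number of vertices'' --- is false under the vertex-degree hypothesis: in the hypergraph with $X\dcup Y=V$, $|X|=\xi n$ for $\xi<1/3$, and $E=\{e:|e\cap X|\neq 2\}$, a pair inside $X$ has linear codegree yet every tight path starting from it remains inside $X$ forever, so no bounded extension can make it connectable; note also that this $H$ is \emph{not} close to any of the extremal examples, so non-extremality of $H$ does not rescue the claim. The paper instead arranges that every path it ever constructs (absorbers, segments of the long path) ends in connectable pairs \emph{by construction}. Second, your non-extremal connecting lemma is asserted rather than proved, and it is exactly the step that separates the sharp $5/9$ bound from the suboptimal ones in~\cites{GPW,1112,RRSSz}; without an explicit mechanism such as the robust-subgraph intersection property, the proposal does not close that gap.
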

A recent result of Cooley and Mycroft~\cite{CoMy} establishes the existence of
an almost spanning tight cycle under the same degree condition as in Theorem~\ref{thm:main}.
Moreover, both these results
are asymptotically best possible, as the following well known examples show.

\begin{exmpl}\label{exmpl1}
\begin{enumerate}[label=\rmlabel]
	\item\label{it:exmpl:1} Consider a partition $X\dcup Y=V$ of a vertex set $V$ of size $n$
		with $|X|=\lceil (n+1)/3\rceil$ and let $H$ be the hypergraph containing all
		triples $e\in V^{(3)}$ such that $|e\cap X|\neq 2$. An averaging argument shows that
		a Hamiltonian cycle in $H$ would need to contain an edge $e$ with at least two vertices
		from $X$. Consequently~$e\subseteq X$ and the cycle could never ``leave'' $X$.
		Therefore, $H$ contains no Hamiltonian cycle (see, e.g.,~\cite{1112}).
		Moreover, we have  $\delta(H)\geq (5/9-o(1))n^2/2$.
	\item\label{it:exmpl:2} Similarly, one may consider a partition $X\dcup Y=V$ with $|X|=\lceil 2n/3\rceil$
		and let~$H$ be the hypergraph consisting of all
		triples $e\in V^{(3)}$ such that $|e\cap X|\neq 2$. Again $H$ has $\delta(H)\geq (5/9-o(1))n^2/2$
		and it contains no tight Hamiltonian cycle.
	\item\label{it:exmpl:3} The last example utilises the fact that every tight Hamiltonian cycle contains a matching of
		size~$\lfloor n/3\rfloor$. Again we consider a partition $X\dcup Y=V$ this time with $|X|=\lfloor n/3\rfloor-1$
		and let $H$ consist of all triples having at least one vertex in~$X$. Consequently,~$H$
		contains no matching of size $\lfloor n/3\rfloor$ and, hence, no tight Hamiltonian cycle.
		On the other hand, $\delta(H)\geq (5/9-o(1))n^2/2$.
\end{enumerate}
\end{exmpl}

We also would like to mention that in addition to the results on Hamiltonian cycles
in $3$-uniform hypergraphs discussed here, quite a few extensions and related results
for $k$-uniform hypergraphs already appeared in the literature and we refer to the surveys~\cites{sur,Zhao-sur}
(and the references therein) for a more detailed discussion.

\subsection*{Organisation} The proof of Theorem~\ref{thm:main} is based on the \emph{absorption method} developed in~\cite{rrs3}
and we discuss this approach in Section~\ref{sec:absmethod}. In Section~\ref{sec:main-proof}
we introduce the main concepts and lemmas for the proof of  Theorem~\ref{thm:main} and deduce
the theorem based on the lemmas. Each of the subsequent Sections~\ref{sec:robust}\,--\,\ref{sec:longpath}
is devoted to the proof of one of the main lemmas from Section~\ref{sec:main-proof}.

\section{Building Hamiltonian cycles in hypergraphs}
\label{sec:plan}
\subsection{Absorption method}
\label{sec:absmethod}
In~\cite{rrs3}
the  \emph{absorption method} was introduced, which
turned out to be a very well suited approach for extremal degree-type problems
forcing the existence of
spanning subhypergraphs. Our proof is also guided
by this strategy, which in the context of  Hamiltonian cycles can be summarised as
follows: Construct an almost spanning cycle~$C$ that contains a special,
so-called \emph{absorbing path}~$P_A$. The absorbing path has the special property
that it can \emph{absorb} the vertices outside~$C$ in such a way that
a Hamiltonian cycle is created.

For example, in the context of graphs a vertex $v$ outside~$C$ could be easily added to~$C$
if it formed a triangle with some edge $xy$ of~$C$, i.e., we would replace
the edge $xy$ of~$C$ by the path $x$-$v$-$y$ of length~$2$. Obviously, this
would have no effect on the remainder of~$C$, since $xy$ and the path
$x$-$v$-$y$ have the same end vertices. However, in order to repeat such a procedure
for $m$ vertices outside~$C$ it would be convenient if each such vertex would form a
triangle with at least $m$ mutually disjoint edges in~$P_A\subseteq C$. Then we could absorb one
vertex after another in a greedy manner into $P_A$ and its extensions.
However, in the proof we may not have much control on the
set of vertices left out by the almost spanning cycle~$C$.

In order to prepare for such a scenario we ensure that~$P_A$ can absorb
any set of vertices, which is not too large. For this it would be desirable to know
that for every vertex $v$ there exist many edges that form a triangle with~$v$, i.e.,
there are many \emph{$v$-absorbers}. Let us remark that if one would like to prove an
approximate version of Dirac's theorem for $n$-vertex graphs $G$
with $\delta(G)\geq (1/2+\alpha)n$,
then these edges would exist. Indeed, one can observe that the degree assumption forces for every vertex $v$
at least~$\alpha n^2/2$ triangles containing it.
Based on this fact one can show that $\eps n$  edges selected independently at random will
contain, with high probability, at least $\delta n$  $v$-absorbers for any vertex~$v$, for some suitably chosen
constants
satisfying $\alpha>\eps>\delta>0$. Moreover, the degree condition allows us to put
all these edges onto one  path,  an absorbing path~$P_A$ with the desired property. Consequently,
the problem of finding a Hamiltonian cycle reduces to finding an almost spanning cycle
$C$ containing~$P_A$ and covering all but at most $\delta n$ vertices of~$G$.

In the context of Dirac's theorem for graphs such a ``reduction'' seems to be somewhat
going overboard, as much simpler proofs even of the exact result are known.
However, for hypergraphs no such simple proof surfaced yet and the
absorption method seems to provide an appropriate approach.

For tight cycles in $3$-uniform hypergraphs, the following absorbers were
considered in~\cite{3}: two hyperedges $xyz$ and $yzw$ (which themselves form a tight
$(x,y)$-$(z,w)$-path of length~2) are a $v$-absorber if $v$ forms a hyperedge with each of the three
consecutive pairs $xy$, $yz$, and $zw$. These three hyperedges
 allow us to insert $v$ between $y$ and $z$, leading
to a tight path of length three with the same end-pairs $(x,y)$ and $(z,w)$.
It is not hard to show  that the minimum pair degree condition $\delta_2(H)\geq (1/2+\alpha)n$
for an $n$-vertex hypergraph $H$ guarantees the existence of $\Omega(n^4)$
$v$-absorbers for any vertex~$v$, which is a good starting point for
building an absorbing path in this context. However, for building such a path
(and  for creating an almost spanning tight cycle~$C$) we would also
need to connect the end-pairs of absorbers (and eventually
the end-pairs of paths to build up~$C$). Again, the minimum pair degree assumption
was utilised for these connections in~\cite{3} and it could be shown that any pair
of pairs can be connected by a relatively short tight path.

For the proof of Theorem~\ref{thm:main}, however, we only have a minimum vertex degree
condition at hand and this calls for more complex $v$-absorbers and a more complicated
connecting mechanism. In \cites{1112,RRSSz} this problem was addressed
by removing hyperedges containing pairs with too small degree, which led to suboptimal
minimum degree conditions. For the asymptotically optimal condition of $\big(\tfrac59+o(1)\big)\tfrac{n^2}2$
new ideas for the absorbers and the connectability were required.

Roughly speaking, the absorbers we shall use here consist of two parts. First, we show that
there are $\Omega(n)$ vertices~$z$ for which there exist $\Omega(n^4)$ tight paths $P_z=xyy'x'$, which can absorb~$z$ in the  way described above, and we call
such vertices~$z$~\emph{absorbable} (see Figure~\ref{fig:absorber}).
Moreover,
for every vertex~$v$ and every absorbable vertex~$z$
there are at least~$\Omega(n^4)$
quadruples $(a,b,c,d)$ such that both vertices $v$ and $z$ form a hyperedge with
all three pairs $ab$, $bc$, and $cd$. In particular, $abvcd$ and $abzcd$
form tight paths of length three in~$H$. Consequently, the two-edge path
$P_z=xyy'x'$ together with the three-edge path $abzcd$ can absorb~$v$ without
changing the end pairs of~$P_z$ and of $abzcd$. Indeed, we may
replace $z$ in $abzcd$ by  $v$ and then include $z$ between $y$ and $y'$ in $P_z$
(see Definition~\ref{dfn-vabs} and Figure~\ref{fig:absorber}). Most importantly, for every vertex $v$ such an argument
would give rise to~$\Omega(n^9)$  absorbers  consisting of a tight path
$abzcd$ of length three and a tight two-edge path~$P_z$, which, in principle,
would allow us to apply the absorption method in a similar manner as in~\cite{3}.

However, connecting the end-pairs of paths arising in the proof requires
more involved changes. In~\cite{3}, the minimum pair degree assumption
allows a \emph{Connecting Lemma} which asserts that for every pair of disjoint  pairs of vertices there exists
a relatively short tight path connecting them.

A similar statement in the context of Theorem~\ref{thm:main}
fails to be true.
In fact, there might be pairs of vertices
that are not contained in any hyperedge at all. More interestingly,
even when restricting to pairs of degree $\Omega(n)$, a corresponding connecting lemma might fail, as the following
example shows.

Similarly as in Examples~\ref{exmpl1}\,\ref{it:exmpl:1} and~\ref{it:exmpl:2}
consider a hypergraph $H=(V,E)$ with  partition $X\dcup Y=V$, where $|X|=\xi n$
for some $\xi<1/3$, and with an edge set defined by~$E=\{e\in V^{(3)}\colon |X\cap e|\neq 2\}$.
For sufficiently large $n$ such a hypergraph $H$ satisfies the degree condition in
Theorem~\ref{thm:main}, but every tight path~$P$ starting with a
pair  of vertices in~$X$ is bound to stay in $X$, i.e., $V(P)\subseteq X$.
Owing to such examples we will define a suitable notion of \emph{connectable pairs}, i.e.,
pairs of vertices for which a restricted Connecting Lemma can be proved (see Definition~\ref{def:connectable}
and Proposition~\ref{lem:con} in the next subsection).
On the other hand, this notion must be flexible and general enough, so that we can show that
all paths considered in the proof have such connectable pairs as ends. In fact,
this adjustment led to a few, somewhat technical, problems that we had to address here.
In the next section we present the notion of connectable pairs and the main lemmas which
lead to the proof of Theorem~\ref{thm:main}.

\subsection{Outline of the proof}
\label{sec:main-proof}
	In this section we present the proof of Theorem~\ref{thm:main} based on
	Propositions~\ref{prop:robust},~\ref{lem:con},~\ref{prop:reservoir},~\ref{prop:absorbingP}, and~\ref{prop:longpath}.
	These propositions will be stated here and we defer
	their proofs to separate later sections. The interplay of these propositions makes
	use of some auxiliary constants. For a simpler presentation we will note their
	dependencies along the
	way	by writing $a\gg b$ to indicate that $b$ will be chosen sufficiently small
	depending on~$a$ (and other constants appearing to the left of~$b$).
	
	More precisely,
	we are first given $\alpha>0$ by Theorem~\ref{thm:main} and without loss of generality we
	may assume that $1\gg\alpha$. Then we fix the following
	auxiliary constants $\beta$, $\zeta_*$, $\zeta_{**}$, $\theta_*$, $\theta_{**}>0$ and
	integers $\l$, $n\in \NN$ obeying the following hierarchy
	\begin{equation}\label{eq:consthierall}
		1
		\gg
		\alpha
		\gg
		\beta,\frac{1}{\l},\zeta_*
		\gg
		\theta_*\gg\zeta_{**}
		\gg
		\theta_{**}
		\gg
		\frac{1}{n}\,.
	\end{equation}
	These constants will be introduced together with the propositions  and
	the quantification of the propositions will allow us to fix them under the hierarchy given
	in~\eqref{eq:consthierall}.

	Theorem~\ref{thm:main} concerns $n$-vertex hypergraphs $H=(V,E)$ with minimum vertex degree
	$\delta(H)\geq \big(\frac{5}{9}+\alpha\big)\frac{n^2}{2}$. This degree condition
	implies a corresponding edge density of the \emph{link graphs} defined below.
	\begin{dfn} \label{dfn:link}
		For a $3$-uniform hypergraph $H=(V,E)$ and a vertex $v\in V$ we define
		the \emph{link graph $L_v$ of $v$} as the graph with vertex set
		$V(L_v)=V$ and edge set
		\[
			E(L_v)=\{ yz \colon vyz\in E(H) \}\,.
		\]
	\end{dfn}
	Observe that $v$ is an isolated vertex in the link graph $L_v$ and $e(L_v)=d_H(v)\geq \delta(H)$.
	The minimum degree assumption of Theorem~\ref{thm:main} implies that
	every link graph has density at least $5/9+\alpha$ and
	in Section~\ref{sec:robust}
	we investigate structural properties
	of such graphs. In particular,
	we shall show that these link graphs contain a ``well connected'' large subgraph,
	which will allow us to build and connect tight paths in the hypergraph (see Proposition~\ref{lem:con} below).
	More precisely, we consider subgraphs satisfying the following property.
	\begin{dfn} \label{dfn:robust}
		A graph $R$ is said to be \emph{$(\beta, \l)$-robust} if for any two distinct
		vertices~$x$ and~$y$ of~$R$ the number of $x$-$y$-paths in $R$ of length $\ell$ is at least
		$\beta |V(R)|^{\ell-1}$.
	\end{dfn}

	The following proposition, which will be proved in Section~\ref{sec:robust}, asserts that all
	link graphs contain a robust subgraph with many vertices and edges. For a graph $G$ and $A$,
	$B\subseteq V(G)$, let $e_G(A,B)$ be the number of edges of $G$ with one vertex in $A$ and one in $B$.

	\begin{prop}[Robust subgraphs] \label{prop:robust}
		For every $\alpha>0$ there are $\beta>0$ and an odd integer $\l\geq 3$
		such that for sufficiently large~$n$
		every $n$-vertex graph $L=(V,E)$ with $|E|\geq \left(\frac{5}{9}+\alpha\right)\frac{n^2}{2}$
		contains an induced subgraph $R\subseteq L$ satisfying
		\begin{enumerate}[label=\rmlabel]
		\item\label{it:rc1} $|V(R)|\ge \bigl(\frac23+\frac\alpha 2\bigr)n$,
		\item\label{it:rc2} $e_L\big(V(R),V\setminus V(R)\big)\leq \alpha n^2/4$ and
			$e(R)\geq \left(\frac{5}{9}+\frac{\alpha}{2}\right)\frac{n^2}{2}-\frac{(n-|V(R)|)^2}{2}$,
		\item\label{it:rc3} and $R$ is $(\beta,\l)$-robust.
	\end{enumerate}
	\end{prop}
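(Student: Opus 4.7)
The plan is to construct $R$ via an iterative peeling procedure that removes ``sparse cuts'' from $L$, and then to deduce robustness from the expansion of the resulting core. Initialize $R^{(0)}=L$. At step $i$, if there exists a nonempty subset $S\subsetneq V(R^{(i)})$ with $|S|\leq \tfrac12|V(R^{(i)})|$ and
\[
e_{R^{(i)}}\bigl(S,\,V(R^{(i)})\setminus S\bigr)\;<\;\gamma\,|S|\bigl(|V(R^{(i)})|-|S|\bigr)
\]
for a fixed threshold $\gamma=\gamma(\alpha)\ll\alpha$, then discard it by setting $R^{(i+1)}=R^{(i)}\bigl[V(R^{(i)})\setminus S\bigr]$. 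Otherwise, terminate and let $R:=R^{(i)}$.

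For \ref{it:rc1} and \ref{it:rc2}, the starting point is the extremal observation that a graph with an independent set of size $s$ has at most $\binom{n}{2}-\binom{s}{2}$ edges; thus the density hypothesis implies that $L$ has no independent set of size larger than $(2/3-c\alpha)n$ for some absolute constant $c>0$. I would combine this with a potential-function argument, tracking, say, $\binom{|V(R^{(i)})|}{2}-e(R^{(i)})$ (or a suitable affine variant): each peeling step changes this quantity by a controlled amount relative to the discarded set's size, while the boundary weight added to the total loss is at most $\gamma|S|\cdot|V(R^{(i)})|$. Summing over all steps bounds the total vertex loss by $(1/3-\alpha/2)n$ (giving \ref{it:rc1}) and the cumulative boundary weight by $\alpha n^2/4$ (giving \ref{it:rc2}, after accounting for edges among the discarded vertices).

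For \ref{it:rc3}, observe that after termination every bipartition $(A,B)$ of $V(R)$ with both parts of linear size satisfies $e_R(A,B)\geq\gamma|A||B|$. The expander mixing lemma then forces a spectral gap: all non-principal eigenvalues of the adjacency matrix of $R$ are bounded away from $\lambda_1$ in absolute value. Since the density of $R$ remains strictly above $1/2$, the graph is far from bipartite, so $|\lambda_m|$ is also bounded away from $\lambda_1$; if necessary, one strengthens the peeling rule to additionally discard subsets creating near-bipartite structure. The standard walk-counting identity then yields $(A_R^\ell)_{xy}=(1+o(1))\lambda_1^\ell/m$ uniformly in $x,y$, so for a sufficiently large odd $\ell$ one obtains at least $\beta\,m^{\ell-1}$ walks of length $\ell$ from $x$ to $y$, where $m=|V(R)|$; at most $O(m^{\ell-2})$ of these revisit a vertex, so the number of genuine paths stays above $\beta\,m^{\ell-1}$ after mildly shrinking $\beta$.

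The sharpest point of the argument is balancing \ref{it:rc1} and \ref{it:rc3}: the peeling must be aggressive enough to remove all sparse cuts, yet gentle enough to preserve a $(2/3+\alpha/2)$-fraction of the vertices. This forces $\gamma$ to be chosen with care, and is precisely where the density threshold $5/9$ is used tightly, through the independence-number calculation above.
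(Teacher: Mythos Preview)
Your argument for \ref{it:rc3} has a genuine gap. The expander mixing lemma runs in the wrong direction: it deduces pseudorandom edge counts \emph{from} a spectral gap, not the reverse. The partial converses (Cheeger, or Bilu--Linial) require regularity or a two-sided discrepancy hypothesis, neither of which your one-sided bound $e_R(A,B)\ge\gamma|A||B|$ provides. More seriously, even granting a gap for $\lambda_2$, the identity $(A_R^\ell)_{xy}=(1+o(1))\lambda_1^\ell/m$ holds only when the Perron eigenvector is nearly constant, i.e., when $R$ is close to regular. Your peeling guarantees only $\delta(R)\ge\gamma|V(R)|$ with $\gamma\ll\alpha$, so degrees can vary by a factor of order $1/\gamma$; for two low-degree vertices the walk count can be far below $\lambda_1^\ell/m$. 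The claim that density above $1/2$ forces $|\lambda_m|$ bounded away from $\lambda_1$ is also unjustified (and fails near $1/2$: add a sparse graph inside each side of $K_{m/2,m/2}$), and your proposed fix of further peeling ``near-bipartite structure'' is not shown to preserve \ref{it:rc1}. Separately, the sketch for \ref{it:rc1}--\ref{it:rc2} is incomplete: the removed sets are not independent, so the independence-number bound does not directly control the total vertex loss, and no potential function is actually exhibited that yields the sharp $(2/3+\alpha/2)n$ rather than some weaker constant.

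The paper avoids spectra entirely. It first establishes an \emph{inseparable} subgraph (Lemma~\ref{lem:robust1}) via a maximal partition into large pieces with small total crossing, and extracts $|V(R)|\ge(2/3+\alpha/2)n$ from the quadratic $(\eta-\tfrac13)(\eta-\tfrac23)\ge\tfrac{\alpha}{2}-4\mu$, which is exactly where $5/9$ enters. Robustness is then proved combinatorially in three steps: inseparability makes the set of vertices reachable from any $x$ by many short walks grow until it is almost all of $V(R)$; the edge lower bound in \ref{it:rc2} then yields $\Omega(n)$ ``parity-flexible'' vertices reachable by both odd and even short walks, which lets one force odd length; finally all walks are padded to a common odd length $\ell$ by bouncing inside first and second neighbourhoods. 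This needs only inseparability and linear minimum degree, not regularity or eigenvalue control.
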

	
	For the proof of Theorem~\ref{thm:main} we fix for every vertex $v\in V$ a
	$(\beta,\l)$-robust subgraph $R_v\subseteq L_v$
	as guaranteed by Proposition~\ref{prop:robust}. In other words, after $\alpha >0$ was revealed in Theorem~\ref{thm:main},
	we use
	Proposition~\ref{prop:robust} to define constants $\beta>0$ and $\l\in \NN$.  We indicate this dependency  by
	\[
		\alpha \gg \beta,\frac{1}{\l}\,.
	\]
	Moreover, we may assume that $n$ is sufficiently large, as it will
	be the last constant to be chosen in the proof of Theorem~\ref{thm:main}. Consequently,
	for any given hypergraph~$H=(V,E)$ concerned in
	Theorem~\ref{thm:main} we can appeal to Proposition~\ref{prop:robust} and this way we fix a
	$(\beta,\l)$-robust subgraph $R_v\subseteq L_v$ for every vertex $v\in V$. We summarise this in
	the following setup.
	
	\begin{setup}\label{setup}
		Suppose  $\alpha$, $\beta>0$, suppose $\l\geq 3$ is an odd integer,
		and suppose $H=(V,E)$ is a $3$-uniform hypergraph with $|V|=n$ sufficiently large,
		with $\delta(H)\geq \left(\frac{5}{9}+\alpha\right)\frac{n^2}{2}$,
		and with $(\beta,\l)$-robust subgraphs $R_v\subseteq L_v$ for every $v\in V$
		given by Proposition~\ref{prop:robust}.
	\end{setup}
	
	As discussed in Section \ref{sec:absmethod}, under the degree assumption of Theorem~\ref{thm:main}
	it is not necessarily true that any two pairs of vertices can be connected at all by a tight path,
	even if we only consider pairs of high degree. Still there is a
	reasonably large collection of pairs admitting such mutual connections. In fact, pairs that
	are contained in sufficiently many robust subgraphs can be connected by tight paths
	in~$H$. This will be made precise in the definition of \emph{connectable pairs}
	and in the \emph{Connecting Lemma} stated below.	
	\begin{dfn}\label{def:connectable}
		Given Setup~\ref{setup} and $\zeta>0$, an \emph{unordered} pair $xy$ of vertices in $V$ is said to be
		\emph{$\zeta$-connectable} if the set
		\[
			U_{xy}=\{v\in V \colon xy\in E(R_v)\}
		\]
		of all vertices $v$ having $xy$ as an edge of their robust subgraph has size
		$|U_{xy}|\ge\zeta |V|$.
		The \emph{ordered} pair $(x, y)$ is called \emph{$\zeta$-connectable} if $xy$ is.
	\end{dfn}
	
	The Connecting Lemma below asserts that pairs of connectable pairs can be connected by many tight paths.
	Section~\ref{sec:connecting} is devoted to the proof of Proposition~\ref{lem:con}.
	
	\begin{prop}[Connecting Lemma] \label{lem:con}
		Given Setup~\ref{setup} and $\zeta>0$, there exists  $\theta>0$ such that
		every two disjoint $\zeta$-connectable ordered pairs~$(x, y)$ and $(z, w)$
		are connected by at least $\theta n^{3\ell+1}$ tight $(x, y)$-$(z, w)$-paths of
		length $3(\ell+1)$ in~$H$.
	\end{prop}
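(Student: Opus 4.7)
The goal is to count tight $(x,y)$-$(z,w)$-paths of length $3(\ell+1)$; with $3\ell+1$ internal vertices the target order is $n^{3\ell+1}$. My plan is to build such a path as a concatenation of three blocks, each a tight sub-path of length $\ell+1$, where consecutive blocks share their two boundary vertices. Splitting the $3\ell+1$ internal vertices as $\ell+1$, $\ell+1$, and $\ell-1$ among the three blocks matches the target count.

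For the left block I would use $\zeta$-connectability of $(x,y)$: the set $U_{xy}$ contains at least $\zeta n$ pivots $v$, each satisfying $xy \in E(R_v) \subseteq E(L_v)$, so that $xyv$ is a hyperedge. The $(\beta,\ell)$-robustness of $R_v$ then supplies $\beta n^{\ell-1}$ length-$\ell$ paths in $R_v$ between a designated starting vertex (say a neighbour of $y$ chosen in $R_v$) and a target vertex; each edge of such a path corresponds to a hyperedge of $H$ containing $v$, and a short combinatorial argument converts the resulting sequence of hyperedges, interleaved with auxiliary vertices drawn from joint neighbourhoods in $H$, into a tight path of length $\ell+1$ in $H$ starting at $(x,y)$ and ending at some pair $(p_1,p_2)$. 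A symmetric construction, using pivots in $U_{zw}$ and their robust subgraphs, produces the right block terminating at $(z,w)$. Finally, a third pivot-plus-robust-path construction, analogous to the previous two, links $(p_1,p_2)$ to the start pair $(q_1,q_2)$ of the right block, yielding the middle block.

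The principal obstacle is that the middle block requires its boundary pairs to themselves be connectable, whereas \emph{a priori} the end pair of an extending construction need not lie in the robust subgraphs of many other vertices. To overcome this, in parallel with the three-block construction I would prove a \emph{connectable-extension lemma}: for a typical pivot $v \in U_{xy}$ and a typical length-$\ell$ path in $R_v$, the resulting end pair $(p_1,p_2)$ is $\zeta'$-connectable for some $\zeta'>0$ depending only on $\alpha$, $\beta$, and $\zeta$. This is established by averaging over $v$ and over the robust paths: since $L_u$ has density at least $5/9+\alpha$ for every $u \in V$, the pair $p_1p_2$ is an edge of $L_u$ for a positive-density set of $u$, and a refinement via Proposition~\ref{prop:robust} together with a further averaging argument ensures the corresponding containment in $R_u$. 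Armed with this lemma, the three blocks combine by multiplication; the rare collisions between vertices of different blocks contribute only a negligible error by a standard union bound, and the resulting count is at least $\theta n^{3\ell+1}$ for a suitable $\theta>0$ compatible with the hierarchy~\eqref{eq:consthierall}.
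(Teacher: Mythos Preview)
Your three-block decomposition breaks down at the most basic step: converting a length-$\ell$ graph path in a single robust subgraph $R_v$ into a tight hypergraph path. If $y_0y_1\cdots y_\ell$ is a path in $R_v$, the hyperedges you obtain are $vy_0y_1, vy_1y_2, \ldots, vy_{\ell-1}y_\ell$; these form a fan through $v$, not a tight path. Consecutive edges of a tight path must overlap in a pair, and here the overlap of $vy_{i-1}y_i$ and $vy_iy_{i+1}$ is $vy_i$, so the sequence $\ldots y_{i-1}\, v\, y_i\, ?$ has no valid continuation unless $y_i y_{i+1}\,?$ is a hyperedge for some new vertex---which nothing in the setup guarantees. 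Your phrase ``interleaved with auxiliary vertices drawn from joint neighbourhoods'' gestures at a fix, but no such fix exists with a single pivot; this is not a ``short combinatorial argument'' but the heart of the matter.

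The paper's solution is to use \emph{many} pivots, one every three positions: the walk has the shape
\[
x\,y\,u(i_1)\,r_1\,r_2\,u(i_2)\,r_3\,r_4\cdots u(i_{(\ell+1)/2})\,a\,b\,v(j_1)\,s_1\,s_2\cdots v(j_{(\ell+1)/2})\,z\,w,
\]
with $u(i_k)\in U_{xy}$ and $v(j_k)\in U_{zw}$. The triples along this sequence are hyperedges precisely when the single graph path $y\,r_1\,r_2\cdots r_{\ell-1}\,a$ lies in $R_{u(i_k)}$ for \emph{every} $k$ (and symmetrically with $b\,s_1\cdots s_{\ell-1}\,z$ and the $R_{v(j_k)}$). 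The count becomes $\sum_{\seq r}|D(\seq r)|^{(\ell+1)/2}$, where $D(\seq r)$ is the set of indices $i$ for which $y\seq r a$ is a path in $R_{u(i)}$; robustness gives $\sum_{\seq r}|D(\seq r)|\ge |I_{ab}|\cdot\beta(2n/3)^{\ell-1}$, and Jensen's inequality handles the exponent. Crucially, the middle pair $(a,b)$ is never required to be connectable: instead Proposition~\ref{prop:intersect} yields $|E(R_u)\cap E(R_v)|\ge \alpha n^2/2$ for every $u,v$, and a further convexity step over $(a,b)$ locates many pairs lying in $E(R_{u(i)})\cap E(R_{v(i)})$ for a positive fraction of indices $i$. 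This intersection property---not any connectable-extension lemma---is what bridges the two halves, and it is entirely absent from your outline.

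Finally, your proposed middle block ``analogous to the previous two'' is circular: to run it you would need the Connecting Lemma itself for the smaller parameter $\zeta'$, and that would produce a connecting path of length $3(\ell+1)$, not $\ell+1$, so neither the total length nor the vertex count matches the statement.
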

	
	The Connecting Lemma  plays a crucial role in building an absorbing path $P_A$
	(guaranteed by Proposition~\ref{prop:absorbingP}), as well as in building an almost spanning
	cycle $C$ (see Proposition~\ref{prop:longpath} below). For the former application
 we shall fix $\zeta_*$ with $\alpha\gg \zeta_*$ and  the  Connecting Lemma
	will yield some constant $\theta_*$ with $\zeta_*\gg \theta_*$.
	Given $\theta_*$ we will then choose $\zeta_{**}$ for the latter application,
 obtaining $\theta_{**}$ with $\zeta_{**}\gg \theta_{**}$. This gives rise to the hierarchy
	\[
		\alpha\gg \zeta_*\gg\theta_*\gg\zeta_{**}\gg\theta_{**}\,,
	\]
	as declared in~\eqref{eq:consthierall}.
	
	The Connecting Lemma will allow us to connect tight paths that start and end
	with a connectable pair. However, in the process of building longer paths, we
	must not interfere with already constructed subpaths. For that we put a small \emph{reservoir}
	of vertices aside and in the proof of Proposition~\ref{prop:longpath}
	connections will only be created by using new vertices from this reservoir.
	The existence of such a reservoir set is given by the following proposition and its
	probabilistic proof is given in Section~\ref{sec:reservoir}.
	
	\begin{prop}[Reservoir Lemma]\label{prop:reservoir}
		Given Setup~\ref{setup} and,  in addition, let $\theta_*$, $\zeta_{**}>0$ and suppose that
		$\theta_{**}=\theta_{**}(\alpha,\beta,\l,\zeta_{**})>0$ is given by Proposition~\ref{lem:con}.
	
		There exists a reservoir set $\cR\subseteq V$ with
		$\frac{\theta_*^2}{2}n\leq|\cR|\leq \theta_*^2n$
		such that for all disjoint pairs of $\zeta_{**}$-connectable pairs~$(x,y)$ and~$(z,w)$
		there are at least $\theta_{**} |\cR|^{3\ell+1}/2$ tight
		$(x, y)$-$(z, w)$-paths of length $3(\ell+1)$ in $H$ whose internal vertices
		belong to $\cR$.
	\end{prop}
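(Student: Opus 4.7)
The plan is to obtain $\cR$ by a standard random selection: include each vertex of~$V$ in~$\cR$ independently with probability $p = (1-\eta)\theta_*^2$, where $\eta = \eta(\l) > 0$ is chosen small enough that $(1-\eta)^{3\l+1} \geq 3/4$. I shall show that with probability $1-o(1)$ the resulting random set meets both requirements simultaneously, which yields the existence of a deterministic~$\cR$ as asserted.

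The size bound is immediate from Chernoff's inequality, since $\EE|\cR| = pn$ lies comfortably inside the interval $\bigl[\tfrac{\theta_*^2}{2}n,\, \theta_*^2 n\bigr]$, so $|\cR|$ falls in this interval with probability $1-e^{-\Omega(n)}$. For the path-count condition, fix two disjoint $\zeta_{**}$-connectable ordered pairs $(x,y)$ and $(z,w)$. By Proposition~\ref{lem:con} applied with $\zeta = \zeta_{**}$, there are at least $\theta_{**} n^{3\l+1}$ tight $(x,y)$-$(z,w)$-paths of length $3(\l+1)$ in~$H$, each having exactly $3\l+1$ internal vertices. Let $Y = Y_{(x,y),(z,w)}$ be the number of such paths whose internal vertices all lie in~$\cR$. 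Since the event that a specific set of $3\l+1$ vertices is contained in~$\cR$ has probability $p^{3\l+1}$, linearity of expectation gives
\[
\EE Y \;\geq\; \theta_{**}\,(pn)^{3\l+1} \;\geq\; \tfrac{3}{4}\,\theta_{**}\,(\theta_*^2 n)^{3\l+1}\,.
\]

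The main technical point is the lower-tail concentration of~$Y$: the indicators corresponding to different paths are not independent, because distinct paths may share internal vertices. I would handle this by applying the Azuma--Hoeffding inequality to the vertex-exposure martingale associated with the independent Bernoulli choices $\{v \in \cR\}_{v \in V}$. Since any single vertex is internal to at most $(3\l+1)n^{3\l}$ of the paths under consideration, the martingale differences are bounded by $(3\l+1)n^{3\l}$, which yields
\[
\PP\bigl[Y < \tfrac{2}{3}\,\EE Y\bigr] \;\leq\; 2\exp(-c n)
\]
for some constant $c = c(\theta_*,\theta_{**},\l) > 0$. A union bound over the at most $n^4$ ordered pairs of $\zeta_{**}$-connectable pairs then leaves total failure probability $o(1)$. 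Combining with the size estimate, with probability $1-o(1)$ we have $|\cR| \leq \theta_*^2 n$ and, for every relevant pair of pairs,
\[
Y_{(x,y),(z,w)} \;\geq\; \tfrac{2}{3}\cdot\tfrac{3}{4}\,\theta_{**}(\theta_*^2 n)^{3\l+1} \;=\; \tfrac{\theta_{**}}{2}\,(\theta_*^2 n)^{3\l+1} \;\geq\; \tfrac{\theta_{**}}{2}\,|\cR|^{3\l+1}\,,
\]
which is exactly the conclusion of the proposition. Hence some~$\cR$ satisfying both requirements exists.
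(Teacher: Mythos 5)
Your proposal is correct and follows essentially the same route as the paper: a binomial random subset with inclusion probability slightly below $\theta_*^2$, Chernoff for the size bounds, Azuma--Hoeffding with Lipschitz constant $(3\l+1)n^{3\l}$ for the lower tail of the path count, and a union bound over the at most $n^4$ pairs of connectable pairs. The only cosmetic difference is that the paper fixes $\eta=\tfrac{1}{10\l}$ explicitly where you leave it as a generic small constant with $(1-\eta)^{3\l+1}\ge 3/4$.
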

	We summarise the situation by the following setup extending Setup~\ref{setup}.
	
	\begin{setup}\label{setup2}
		Let Setup~\ref{setup} and constants as stated in~\eqref{eq:consthierall}
		be given, where $\theta_{*}\!=\!\theta_{*}(\alpha,\beta,\l,\zeta_{*})$
		and $\theta_{**}=\theta_{**}(\alpha,\beta,\l,\zeta_{**})$ are given
		by Proposition~\ref{lem:con}. In addition, let $\cR\subseteq V$  be a reservoir set
		given by Proposition~\ref{prop:reservoir}.
	\end{setup}

	After these preparatory propositions we are ready to build a Hamiltonian cycle.
	As outlined above, we first create and put aside an \emph{absorbing path~$P_A$}, which at the end of the proof
	will allow us to `absorb' an arbitrary (but not too large) set $X$ of leftover vertices into an almost
	spanning tight cycle, thus creating a tight Hamiltonian cycle.
	\begin{prop}[Absorbing path]\label{prop:absorbingP}
		Given Setup~\ref{setup2}, there exists a tight (absorbing) path~$P_A$ which is a subhypergraph
		of $H-\cR$ and has the following properties.
		\begin{enumerate}[label=\rmlabel]
		\item\label{it:PA1} $ |V(P_A)|\le \theta_*n$,
		\item\label{it:PA2} the end-pairs of $P_A$ are $\zeta_*$-connectable, and
		\item\label{it:PA3} for every set $X\subseteq V\setminus V(P_A)$
			with $|X|\le 2\theta_*^2n$ there is a tight path in $H$ whose set of vertices
			is $V(P_A)\cup X$ and whose end-pairs are the same as those of $P_A$.
		\end{enumerate}
	\end{prop}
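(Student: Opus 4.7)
The plan is to follow the absorption scheme sketched in Section~\ref{sec:absmethod}. I call a vertex $z\in V$ \emph{absorbable} if there are $\Omega(n^4)$ quadruples $(x,y,y',x')$ of distinct vertices for which $xyy'x'$ is a tight path and $zxy,zyy',zy'x'\in E(H)$; then a \emph{$v$-absorber} is a $9$-tuple $(a,b,c,d,z,x,y,y',x')$ of pairwise distinct vertices in $V\setminus(\{v\}\cup\cR)$ such that $z$ is absorbable via $(x,y,y',x')$, the string $abzcd$ is a tight $(a,b)$-$(c,d)$-path, $vab,vbc,vcd\in E(H)$, and all four end-pairs $(a,b)$, $(c,d)$, $(x,y)$, $(y',x')$ are $\zeta_*$-connectable. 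As explained in Section~\ref{sec:absmethod}, if $P_A$ contains such an absorber among its consecutive sub-paths, then replacing $z$ by $v$ inside $abzcd$ and inserting $z$ between $y$ and $y'$ inside $xyy'x'$ absorbs $v$ without altering the end-pairs of either sub-path, and hence without destroying $P_A$ as a tight path.

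First I would verify that every $v\in V$ admits at least $\gamma n^9$ such $v$-absorbers for some $\gamma=\gamma(\alpha,\beta,\l)>0$. Since each link graph $L_z$ has density at least $5/9+\alpha$, standard counting produces $\Omega(n^4)$ tight length-$3$ paths inside $L_z$, making in particular $\Omega(n)$ vertices absorbable; for any fixed absorbable $z$ and any $v\in V$, the intersection $L_v\cap L_z$ has density at least $(5/9+\alpha)+(5/9+\alpha)-1=1/9+2\alpha$ and therefore contains $\Omega(n^4)$ length-$3$ paths $a\text{-}b\text{-}c\text{-}d$ whose every edge lies in both link graphs, yielding the required tight paths $abzcd$. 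Multiplying by the $\Omega(n^4)$ choices of $(x,y,y',x')$ produces $\Omega(n^9)$ pre-absorbers, and the $\zeta_*$-connectability demand on the four end-pairs removes only a bounded fraction because averaging $\sum_w e(R_w)$ through Proposition~\ref{prop:robust}\,\ref{it:rc2} shows that the pairs that lie in many link graphs satisfy $|U_{ab}|\ge\zeta_*n$ whenever $\zeta_*\ll\alpha$. I would then select each $v$-absorber independently with probability $p=\Theta(\theta_*^2/n^8)$. A Chernoff bound applied separately to each $v\in V$, together with a Markov bound on the number of intersecting pairs of chosen absorbers, gives with positive probability a family $\cF$ with $|\cF|=O(\theta_*^2n)$ that contains at least $4\theta_*^2n$ $v$-absorbers for every $v$; deleting one absorber from each intersecting pair and any absorber meeting $\cR$ leaves a vertex-disjoint subfamily $\cF'\subseteq V\setminus\cR$ still providing at least $3\theta_*^2n$ $v$-absorbers per~$v$.

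To build $P_A$, I regard each absorber in $\cF'$ as two tight sub-paths with $\zeta_*$-connectable end-pairs, fix some ordering of all $2|\cF'|$ sub-paths, and apply the Connecting Lemma (Proposition~\ref{lem:con}) repeatedly to splice them in that order into a single tight path. At each step the $O(\theta_*^2n)$ already used vertices together with $\cR$ forbid at most $O(\theta_*^2n)\cdot n^{3\l}$ of the $\theta_*n^{3\l+1}$ connecting paths supplied by Proposition~\ref{lem:con}, a negligible fraction because $\theta_*^2\ll\theta_*$, so a valid connection always exists. The resulting path $P_A\subseteq H-\cR$ has $|V(P_A)|\le 9|\cF'|+O(\l\theta_*^2n)\le\theta_*n$ and inherits $\zeta_*$-connectable end-pairs from the outermost sub-paths, giving~\ref{it:PA1} and~\ref{it:PA2}. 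For~\ref{it:PA3}, given any $X\subseteq V\setminus V(P_A)$ with $|X|\le2\theta_*^2n$, I absorb its vertices greedily: at each step pick an unused $v$-absorber in $\cF'$ (at least $3\theta_*^2n-|X|\ge\theta_*^2n$ remain) and carry out the two local substitutions, which preserve the end-pairs of the affected sub-paths and therefore keep $P_A$ a tight path throughout. The main obstacle is the absorber-counting step: the $\Omega(n^9)$ lower bound has to handle simultaneously the interaction of two link graphs, the absorbability of~$z$, and the connectability of four end-pairs, and this is where the structural information supplied by the robust subgraphs~$R_v$ of Proposition~\ref{prop:robust}---rather than sheer density---is essential.
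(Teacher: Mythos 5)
Your overall architecture (absorbable vertices, $9$-tuple absorbers, random selection of a disjoint family, splicing via the Connecting Lemma, greedy absorption) matches the paper's, but the counting of absorbers --- which you yourself identify as the main obstacle --- contains two genuine gaps. First, your claim that ``the intersection $L_v\cap L_z$ has density at least $1/9+2\alpha$ and therefore contains $\Omega(n^4)$ length-$3$ paths'' followed by ``the $\zeta_*$-connectability demand \dots\ removes only a bounded fraction because averaging $\sum_w e(R_w)$ \dots\ shows that pairs that lie in many link graphs satisfy $|U_{ab}|\ge\zeta_* n$'' does not work. Connectability is defined through the \emph{robust subgraphs}: $U_{ab}=\{w\colon ab\in E(R_w)\}$, and a pair lying in many link graphs need not lie in many robust subgraphs (e.g.\ if $a$ has small pair degree $d_H(a,w)$ for most $w$, then $a\notin V(R_w)$ for those $w$, yet $ab$ can still have large codegree $d_H(a,b)$). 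The only tool available for discarding non-connectable pairs is Fact~\ref{fact:easy}, which bounds the number of triples $(x,y,z)$ with $xy\in E(R_z)$ and $xy$ not connectable; to invoke it you must produce the path $a$-$b$-$c$-$d$ inside $E(R_v)\cap E(R_z)$, not inside $E(L_v)\cap E(L_z)$. This is exactly why the paper proves Proposition~\ref{prop:intersect} (any two robust subgraphs share at least $\alpha n^2/2$ edges --- a statement that needs the quantitative size and edge-count guarantees of Proposition~\ref{prop:robust}, not mere inclusion--exclusion on link densities) and then applies Blakley--Roy to $R_v\cap R_z$.

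Second, the ``standard counting'' that is supposed to produce $\Omega(n)$ absorbable vertices is not standard: the quadruple $(x,y,y',x')$ must satisfy $zxy,zyy',zy'x'\in E$ \emph{and} $xyy',yy'x'\in E$ simultaneously, so you need a length-$3$ path in $L_z$ whose two interior pairs $xy$ and $y'x'$ each have linearly many common neighbours with $yy'$ in the hypergraph sense. Density of $L_z$ alone does not give this; the paper's Lemma~\ref{lem:28} establishes it via the ``central edge'' device (the weight $f(x,y,z)=n/d(x,y)+n/d(x,z)+n/d(y,z)$ and a Cauchy--Schwarz step), which also supplies the codegree bound $d(y,z)>\tfrac{5}{12}n$ needed later to show that $y$ typically lies in $V(R_z)$, so that Fact~\ref{fact:easy} can again be used to make $xy$ and $y'x'$ connectable. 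Without these two ingredients your $\Omega(n^9)$ lower bound on connectable $v$-absorbers is unsubstantiated; the remainder of your argument (Chernoff/Markov selection of $\cF$, connecting, and clause~\ref{it:PA3}) is sound and agrees with the paper.
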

	The proof of Proposition~\ref{prop:absorbingP} is the content of Section~\ref{sec:absorbing}.
	The last proposition (see Section~\ref{sec:longpath} for its proof)
	establishes the existence of an  almost spanning tight cycle
	containing~$P_A$ and covering all but at most $2\theta_*^2n$ vertices of $H$.
	
	\begin{prop}[Almost spanning cycle]\label{prop:longpath}
		Given Setup~\ref{setup2} and a tight absorbing path $P_A\subseteq H-\cR$
		from Proposition~\ref{prop:absorbingP},
		there exists a tight cycle $C\subseteq H$ containing $P_A$ and passing through at least
		$(1-2\theta_{*}^2)n$ vertices.
	\end{prop}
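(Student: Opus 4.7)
The plan is to cover almost all of $V\setminus(V(P_A)\cup\cR)$ by a small family of vertex-disjoint tight paths with $\zeta_{**}$-connectable end-pairs, and then to stitch these paths together with $P_A$ into one tight cycle by repeatedly invoking the Reservoir Lemma (Proposition~\ref{prop:reservoir}). Since $\zeta_*\gg\zeta_{**}$ in the hierarchy~\eqref{eq:consthierall}, the end-pairs of $P_A$, which are $\zeta_*$-connectable by Proposition~\ref{prop:absorbingP}\,\ref{it:PA2}, are a fortiori $\zeta_{**}$-connectable, so $P_A$ fits naturally into this framework as one of the paths.

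\textbf{Step~1 (almost spanning path cover).} I would first establish a covering statement: the sub-hypergraph $H-\cR-V(P_A)$ contains pairwise vertex-disjoint tight paths $Q_1,\dots,Q_k$ such that each $Q_i$ has $\zeta_{**}$-connectable end-pairs, the number of paths satisfies $k\l^2\ll \theta_{**}|\cR|$, and $|V\setminus V(P_A)\setminus\cR\setminus\bigcup_i V(Q_i)|\le \theta_*^2 n$. Such a family would be produced by a greedy procedure: start from a $\zeta_{**}$-connectable pair and extend the current tight path vertex by vertex, at each step appending a vertex that both lies in many robust link graphs witnessing the current end-pair and yields a new $\zeta_{**}$-connectable end-pair; when no such extension is available, close the current path and restart from another unused connectable pair on the remaining vertices. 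Using the robust subgraphs $R_v$ from Proposition~\ref{prop:robust} and the density threshold of Setup~\ref{setup}, one shows that ``most'' candidate extensions preserve $\zeta_{**}$-connectability, so extension failures are rare, only few paths $Q_i$ are spawned, and the uncovered set shrinks below $\theta_*^2 n$.

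\textbf{Step~2 (stitching via the reservoir).} Arrange $P_A,Q_1,\dots,Q_k$ in a cyclic order. For each of the $k+1$ adjacent pairs of end-pairs, Proposition~\ref{prop:reservoir} supplies at least $\tfrac12\theta_{**}|\cR|^{3\l+1}$ tight connecting paths of length $3(\l+1)$ whose $3\l+2$ internal vertices lie entirely in $\cR$. Process the $k+1$ connections one by one; at each step a single reservoir vertex already committed forbids at most $O(\l)|\cR|^{3\l}$ candidate connecting paths, so in total at most $O(k\l^2)|\cR|^{3\l}$ candidates are excluded. Since $k\l^2\ll\theta_{**}|\cR|$ by the choice made in Step~1, a valid internally disjoint connection always remains. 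Splicing the chosen connections yields a tight cycle $C$ containing $P_A$ whose missed vertices lie in the Step~1 leftover together with the at most $|\cR|\le\theta_*^2 n$ reservoir vertices, giving $|V\setminus V(C)|\le 2\theta_*^2 n$, as required.

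The main obstacle is \textbf{Step~1}: one must maintain $\zeta_{**}$-connectability of end-pairs throughout a greedy extension while still covering almost all non-reservoir, non-$P_A$ vertices with only a very small number of paths. The difficulty is that the pair degree of an end-pair is not directly prescribed by the minimum \emph{vertex} degree assumption of Setup~\ref{setup}, so extension cannot proceed by a naive pigeonhole on pair degrees. The argument must instead leverage the robust link-graph structure from Proposition~\ref{prop:robust}, perhaps via an averaging or random sampling argument showing that the set of ``bad'' end-pair configurations—those from which no $\zeta_{**}$-connectable extension exists—is negligibly small. This is where the density threshold $5/9+\alpha$ plays its essential role and where I expect the technical heart of the proof to lie.
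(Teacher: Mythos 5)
Your Step~2 is essentially the paper's reduction: the paper proves a covering statement (Lemma~\ref{lem:long}, producing a single long path $Q$ with $\zeta_{**}$-connectable ends that misses at most $\theta_*^2n$ non-reservoir vertices and uses at most $\theta_{**}^2n$ reservoir vertices) and then splices $Q$ and $P_A$ through the reservoir exactly as you describe, using Lemma~\ref{lem:use-reservoir} to keep the connections internally disjoint. So the architecture is right. But Step~1 is where the entire content of the proposition lives, and what you offer there is not a proof but a description of the difficulty. A greedy vertex-by-vertex extension cannot be made to work as stated: the minimum vertex degree hypothesis gives no lower bound on pair degrees, so nothing prevents the greedy process from stalling very often, spawning $\Omega(n)$ short paths and leaving a leftover set of size $\Omega(n)$ on which, say, no pair is $\zeta_{**}$-connectable. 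You give no mechanism for bounding the number of restarts $k$ or the size of the uncovered set, and you explicitly defer the key point (``perhaps via an averaging or random sampling argument''), which is precisely the part that needs an idea.

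The paper's actual mechanism is an exchange/augmentation argument on a maximal configuration, not a greedy one. It fixes a maximal ``candidate'' $(\ccC,Q)$ built from $M$-vertex pieces with connectable ends, and assumes for contradiction that the uncovered set $U$ is large. For each $u\in U$ (outside a small bad set) it passes to the subgraph $\overline R_u\subseteq R_u$ of $\zeta_{**}$-connectable edges, partitions the covered and uncovered vertices into blocks, and shows by a counting argument that some ``society'' of $m=O(1/\alpha)$ blocks is \emph{useful} for $\Omega(|U|)$ vertices $u$: the restriction of $\overline R_u$ to that society is so dense that the Faudree--Schelp extension of the Erd\H{o}s--Gallai theorem forces a long graph path $W$ inside it. Pigeonholing gives many $u$ whose $\overline R_u$ all contain the \emph{same} $W$; interleaving $W$ with these vertices $u$ produces a tight path $T$ that splits into $m+6$ new pieces, and swapping out the at most $m$ old pieces inside the society for these $m+6$ new ones contradicts maximality. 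This augmentation step, resting on Proposition~\ref{prop:intersect}-style density of the robust subgraphs and on Theorem~\ref{thm:FS}, is the missing idea in your proposal; without it (or a genuine substitute) the covering statement in Step~1 is unproved and the argument does not go through. (A minor further slip: a tight connecting path of length $3(\l+1)$ has $3\l+1$ internal vertices, not $3\l+2$.)
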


	Finally, we observe that combining Propositions~\ref{prop:absorbingP} and~\ref{prop:longpath} implies
	the existence of a Hamiltonian tight cycle in~$H$.
	\begin{proof}[Proof of Theorem~\ref{thm:main}]
		Given $\alpha>0$ we choose all auxiliary constants as described above
		and assume Setup~\ref{setup2}. Proposition~\ref{prop:absorbingP} yields
		an absorbing path $P_A$ and then Proposition~\ref{prop:longpath}
		guarantees the existence of an almost spanning cycle~$C$ which contains the
		absorbing path $P_A$ and covers all but at most $2\theta_*^2n$ vertices.
		Property~\ref{it:PA3} of the absorbing path $P_A$ allows us to absorb the
		remaining vertices into the cycle. This concludes the proof of Theorem~\ref{thm:main}. 	
	\end{proof}
	
	It is left to prove
	Propositions~\ref{prop:robust},~\ref{lem:con},~\ref{prop:reservoir},~\ref{prop:absorbingP},
	and~\ref{prop:longpath}, which is the content of Sections~\ref{sec:robust}\,--\,\ref{sec:longpath}.

\section{Robust subgraphs} \label{sec:robust}
In this section we establish the existence of robust subgraphs within the link graphs of the given
hypergraph~$H$.
The proof of Proposition~\ref{prop:robust} splits into two parts. In the first part
(rendered in Lemma~\ref{lem:robust1} below)
we establish the existence of a subgraph~$R$ satisfying properties~\ref{it:rc1}
and~\ref{it:rc2} of Proposition~\ref{prop:robust}, and the following strong connectivity property.

\begin{dfn} \label{def:nonsep}
	A graph $R$ is said to be \emph{$\mu$-inseparable} if $\delta(R)\geq \mu |V(R)|$
	and for every partition $X\dcup Y=V(R)$ into parts of size at least $\mu|V(R)|$ we have
	$e(X,Y)\geq \mu^2|V(R)|^2$.
\end{dfn}
\begin{lemma}\label{lem:robust1}
For every $\alpha>0$ and sufficiently large~$n$
every $n$-vertex graph $L=(V,E)$ with $|E|\geq \left(\frac{5}{9}+\alpha\right)\frac{n^2}{2}$
contains an induced subgraph $R\subseteq L$ satisfying
\begin{enumerate}[label=\rmlabel]
\item\label{it:rc1p} $|V(R)|\ge \bigl(\frac23+\frac\alpha 2\bigr)n$,
\item\label{it:rc2p} $e_L\big(V(R),V\setminus V(R)\big)\leq \alpha n^2/4$ and
	$e(R)\geq \left(\frac{5}{9}+\frac{\alpha}{2}\right)\frac{n^2}{2}-\frac{(n-|V(R)|)^2}{2}$,
\item\label{it:rc3p} and $R$ is $(\alpha/72)$-inseparable.
\end{enumerate}
\end{lemma}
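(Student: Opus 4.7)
The plan is a greedy cleaning of $L$: set $\mu := \alpha/72$, initialise $R := L$, and iterate: either delete a vertex $v\in V(R)$ with $d_R(v) < \mu|V(R)|$ or, failing that, delete the smaller side $X$ of a partition $V(R) = X \dcup Y$ with $|X|, |Y| \ge \mu|V(R)|$ and $e_R(X,Y) < \mu^2|V(R)|^2$. The process halts exactly when $R$ is $\mu$-inseparable, which is precisely~\ref{it:rc3p}. It remains to control how much is lost in the cleaning.

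To do this, I would track the potential
\[
  \phi(R) \;:=\; e(R) + \tfrac{1}{2}\bigl(n - |V(R)|\bigr)^2,
\]
which starts at $\phi(L) \ge (5/9 + \alpha)n^2/2$ and for which~\ref{it:rc2p} amounts to the single inequality $\phi(R) \ge (5/9 + \alpha/2)n^2/2$. A vertex removal decreases $e(R)$ by less than $\mu|V(R)|$ while $(n - |V(R)|)^2$ only grows, so it costs at most $\mu n$. A partition removal of $X$ of size $s$ decreases $e(R)$ by at most $\binom{s}{2} + \mu^2|V(R)|^2 \le s^2/2 + \mu^2 n^2$ and raises $\tfrac12(n-|V(R)|)^2$ by at least $s^2/2$, costing at most $\mu^2 n^2$. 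Suppose temporarily that $|V(R)| \ge (2/3+\alpha/2)n$ is maintained throughout. Then at most $n/3$ vertex removals occur and at most $1/(2\mu)$ partition removals occur (each removing $\ge \mu(2/3+\alpha/2)n$ vertices), so
\[
  \phi(L) - \phi(R) \;\le\; \tfrac{n}{3}\cdot \mu n + \tfrac{1}{2\mu}\cdot \mu^2 n^2 \;=\; \tfrac{5\mu n^2}{6} \;\ll\; \tfrac{\alpha n^2}{4}.
\]
The same arithmetic bounds the total cross edges $e_L(V(R),V\setminus V(R))$ by $5\mu n^2/6 < \alpha n^2/4$, since each removal contributes at most $\mu n$ or $\mu^2 n^2$ new cross edges. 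This yields both halves of~\ref{it:rc2p}.

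The main obstacle is~\ref{it:rc1p}, and this is exactly where the threshold $5/9$ enters through a convex-function argument. Suppose for contradiction that the process first drops $|V(R)|$ below $(2/3+\alpha/2)n$ at some step~$T$. A vertex removal shrinks $|V(R)|$ by one, while a partition removal retains at least half of $V(R_{T-1})$, so $|V(R_T)|/n$ lies in $\bigl(1/3+\alpha/4,\,2/3+\alpha/2\bigr)$. On this interval the parabola $f(x) := x^2 + (1-x)^2 = 2x^2 - 2x + 1$, which satisfies $f(2/3) = 5/9$, attains its maximum at the right endpoint, where $f(2/3+\alpha/2) = 5/9 + \alpha/3 + \alpha^2/2$. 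Thus
\[
  \phi(R_T) \;\le\; \tfrac{|V(R_T)|^2}{2} + \tfrac{(n-|V(R_T)|)^2}{2} \;\le\; \tfrac{n^2}{2}\Bigl(\tfrac59 + \tfrac{\alpha}{3} + \tfrac{\alpha^2}{2}\Bigr).
\]
On the other hand, applying the loss bound through step~$T$ gives $\phi(R_T) \ge (5/9 + \alpha - O(\mu))n^2/2$; since $\mu = \alpha/72$ and $\alpha < 1$, the gap $2\alpha/3 - \alpha^2/2$ between the two sides is of order $\alpha$ and dwarfs $O(\mu) = O(\alpha/72)$, contradicting $\phi(R_T) \le \phi(R_T)$. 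Hence the cleaning halts while $|V(R)| \ge (2/3+\alpha/2)n$, and the output $R$ satisfies all three properties. The only delicate point is this convexity-at-$2/3$ inequality, which is precisely what the hypothesis $e(L) \ge (5/9+\alpha)n^2/2$ is designed to supply.
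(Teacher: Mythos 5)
Your proof is correct, and it takes a genuinely different route from the paper's. The paper obtains $R$ from a single extremal object: it fixes the maximum number $t$ of parts in a partition $V_1\dcup\dots\dcup V_t$ of $V$ with all parts of size at least $\mu n/2$ and few cross edges, proves $|V_1|\ge\bigl(\tfrac23+\tfrac23\alpha\bigr)n$ by essentially the same quadratic inequality you use, and then deletes a greedily chosen set $W$ of low-degree vertices from $V_1$; both the bound $|W|<\mu n/2$ and the inseparability of $L[V_1\setminus W]$ are read off from the maximality of $t$ (a large $W$ or a sparse cut would permit refining into $t+1$ parts). You instead run the natural iterative cleaning procedure and control it with the potential $\phi(R)=e(R)+\tfrac12(n-|V(R)|)^2$, which packages the second half of~\ref{it:rc2p} into a single monotonicity statement, while inseparability falls out of the halting condition rather than being extracted from an extremal choice. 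The two arguments meet at the same crux: $x^2+(1-x)^2$ dips below $\tfrac59$ only for $x$ strictly between $\tfrac13$ and $\tfrac23$, so the solution branch near $\tfrac13$ must be excluded --- the paper does this by a preliminary estimate showing $\eta>\tfrac13$ directly from the edge count, whereas you do it by always deleting the smaller side of a cut, so that a single step at most halves $|V(R)|$ and the first failing step lands in $\bigl(\tfrac13+\tfrac\alpha4,\tfrac23+\tfrac\alpha2\bigr)$. Your bookkeeping of step counts is sound and the constants close with ample room ($\tfrac{5\mu}{6}n^2\le\tfrac{\alpha}{4}n^2$ for $\mu=\alpha/72$, and the gap $\tfrac{2\alpha}{3}-\tfrac{\alpha^2}{2}$ dominates the accumulated loss since $\alpha\le\tfrac49$); the only blemish is the final phrase ``contradicting $\phi(R_T)\le\phi(R_T)$'', which should say that the lower bound on $\phi(R_T)$ exceeds the upper bound.
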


In the second part of the proof we deduce
Proposition~\ref{prop:robust} from Lemma~\ref{lem:robust1}
and for that we utilise the inseparability
of~$R$ to deduce the robustness. We first give the proof of the lemma.

\begin{proof}[Proof of Lemma~\ref{lem:robust1}]
	We may assume $\alpha\in(0,4/9]$, since otherwise no graph $L$ satisfying the assumption exists.
	For convenience set
	\begin{equation}\label{eq:rc:mu}
		\mu=\frac{\alpha}{72}
	\end{equation}
	and for sufficiently large $n$ let $L=(V,E)$ be an $n$-vertex  graph with
	$e(L)\geq \big(\frac{5}{9}+\alpha\big)\frac{n^2}{2}$.
	
	\subsection*{Defining the subgraph \texorpdfstring{$R$}{{\it R}}} We fix the maximum $t\in\NN$ for which there exists a
	partition $V_1\dcup\dots\dcup V_t=V$ with
	\begin{enumerate}[label=\alabel]
	\item\label{it:rc:p:1} $|V_1|\geq \dots\geq |V_t|\geq \mu n/2$ and
	\item\label{it:rc:p:2} $\sum_{1\leq i<j\leq t}e_{L}(V_i,V_j)\leq 2(t-1)\mu^2 n^2$.
	\end{enumerate}
	Since the trivial partition $V_1=V$ satisfies properties~\ref{it:rc:p:1} and~\ref{it:rc:p:2}
	we know $t\geq 1$ and from~\ref{it:rc:p:1} we infer that $t\leq 2/\mu$. Moreover, the upper bound
	on~$t$ combined with~\ref{it:rc:p:2} implies
	\begin{equation}\label{eq:b'}
		\sum_{1\leq i<j\leq t}e_{L}(V_i,V_j)
		<
		4\mu n^2\,.
	\end{equation}
	Let $\eta\in (0,1]$ be given by
	\[
		|V_1|=\eta n\,.
	\]
	It is easy to check that $\eta>1/3$, as otherwise
	\begin{align*}
		e(L)
		=
		\sum_{i=1}^te_L(V_i)+\sum_{1\leq i<j\leq t}e_{L}(V_i,V_j)
		& <
		\sum_{i=1}^t\frac{|V_i|^2}{2}+4\mu n^2 \\
		& \leq
		\frac{n}{3}\sum_{i=1}^t\frac{|V_i|}{2}+4\mu n^2
		=
		\left(\frac{1}{3}+8\mu\right)\frac{n^2}{2}
		\overset{\eqref{eq:rc:mu}}{\leq}\frac{5}{9}\frac{n^2}{2}
	\end{align*}
	contradicts our assumption on $e(L)$.
	However, below we even show $\eta>2/3$ and in
	the proof of that we will consider a quadratic inequality
	where the weak bound $\eta>1/3$ from above rules out one interval
	of possible solutions.
	In fact, we have
	\[
		\frac{\eta^2n^2}{2}
		\geq
		e_L(V_1)
		>
		e(L)-\frac{(n-|V_1|)^2}{2}-4\mu n^2
		\geq
		\left(\frac{5}{9}+\alpha-(1-\eta)^2-8\mu\right)\frac{n^2}{2}\,.
	\]
	This leads to the quadratic inequality
	\[
		\eta^2\geq \frac{5}{9}+\alpha - (1 -\eta)^2 -8\mu
		\quad\Longleftrightarrow\quad
		\left(\eta-\frac{1}{3}\right)\left(\eta-\frac{2}{3}\right)\geq \frac{\alpha}{2}-4\mu\,.
	\]
	Since assuming that $\eta\in(\tfrac{1}{3}, \tfrac{2}{3}+\tfrac{2}{3}\alpha)$ would yield
	\[
		\left(\eta-\frac{1}{3}\right)\left(\eta-\frac{2}{3}\right)
		<
		\left(\eta-\frac{1}{3}\right)\cdot\frac{2}{3}\alpha
		<
		\frac{2}{3}\cdot\frac{2}{3}\alpha
		=
		\frac{\alpha}{2}-\frac{\alpha}{18}
		=
		\frac{\alpha}{2}-4\mu\,,
	\]
	we have
	\begin{equation}\label{eq:rc:V1}
		|V_1|
		=
		\eta n
		\geq
		\left(\frac{2}{3}+\frac{2}{3}\alpha\right)n
		\qqand
		e_L(V_1)>\frac{2}{9}n^2\geq \mu n^2\,.
	\end{equation}
	Let $W=\{w_1,\dots,w_m\}\subseteq V_1$ be a maximal (ordered) subset such that
	\[
		\big|N_L(w_i)\cap (V_1\setminus \{w_1,\dots,w_{i-1}\})\big|
		<\mu n
	\]
	for every $i\in [m]$. Owing to the second part of~\eqref{eq:rc:V1} we have
	$V_1\setminus W\neq\emptyset$. Moreover, by definition $V_1\setminus W$ induces a subgraph
	of minimum degree at least $\mu n$ in $L$
	and we set
	\[
		U=V_1\setminus W
		\qqand
		R=L[U]\,,
	\]
	and below we verify that~$R$ has the desired properties.
	
	\subsection*{Verifying the properties of \texorpdfstring{$R$}{{\it R}}}
	We first observe that $|W|<\mu n/2$. Suppose for a contradiction
	that there exists a subset $W'=\{w_1,\dots,w_{\lceil \mu n/2\rceil}\}\subseteq W$. Then we can
	replace the set $V_1$
	in the partition
	$V_1\dcup\dots\dcup V_t=V$ by $W' \dcup (V_1\setminus W')$ and obtain a partition into $t+1$ parts,
	which satisfies~\ref{it:rc:p:1}, as
	\begin{equation}\label{eq:deltaR}
		|V_1\setminus W'|\geq |V_1\setminus W|=|U|>\delta(R)\geq \mu n.
	\end{equation}
	Moreover, the ordering of the vertices in~$W$ yields
	\begin{equation}\label{eq:rc:eV1U}
			e_L(W', V_1\setminus W')
			\leq
			\sum_{w_i\in W'}\big|N_L(w_i)\cap (V_1\setminus \{w_1,\dots,w_{i-1}\})\big|
			<
			\mu n\cdot |W'|
			\leq
			\mu^2 n^2,
	\end{equation}
	which shows that the partition $W'\dcup (V_1\setminus W')\dcup V_2\dcup\dots\dcup V_t=V$
	also satisfies~\ref{it:rc:p:2}. Consequently, this partition would contradict the
	maximal choice of~$t$ and, hence, we have indeed $|W|<\mu n/2$.
	
	Property~\ref{it:rc1p} of Lemma~\ref{lem:robust1} then follows from
	\begin{align*}
		|V(R)|=|U|=|V_1\setminus W|&=|V_1|-|W|\\
		&>
		|V_1|-\frac{\mu}{2} n
		=
		(\eta-\tfrac{\mu}{2})n
		\overset{\eqref{eq:rc:V1}}{\geq}
		\left(\frac{2}{3}+\frac{2\alpha}{3}-\frac{\mu}{2}\right)n
		\overset{\eqref{eq:rc:mu}}{\geq}
		\left(\frac{2}{3}+\frac{\alpha}{2}\right)n.	
	\end{align*}
	For property~\ref{it:rc2p}, note that
	\begin{align*}
		e_L\big(U,V\setminus U\big)
		&=
		\sum_{i=2}^te_{L}(U,V_i)+e_L(U,W)\\
		&\leq
		\sum_{i=2}^te_{L}(V_1,V_i)+\mu n|W|
		\overset{\text{\ref{it:rc:p:2}}}{\leq}
		2(t-1)\mu^2n^2+\mu^2 n^2
		<
		4\mu n^2,
	\end{align*}
	where we used $t\leq 2/\mu$ in the last inequality. Consequently,
	the first inequality of property~\ref{it:rc2p} follows from the choice of $\mu$ in~\eqref{eq:rc:mu}.
	The second inequality is a direct consequence of the first and the lower bound on $e(L)$ given by
	the assumption of the lemma
	\[
		e(R)
		=
		e(L)-e_L(U,V\setminus U)-e_L(V\setminus U)
		\geq
		\left(\frac{5}{9}+\alpha\right)\frac{n^2}{2}-\frac{\alpha}{2}\frac{n^2}{2}-\frac{(n-|U|)^2}{2}\,.
	\]

	For property~\ref{it:rc3p}  we first note that we already observed the
	required minimum degree condition $\delta(R)\geq \mu |U|$
	in~\eqref{eq:deltaR}.
	For the second property in Definition~\ref{def:nonsep} consider an arbitrary partition
	$X\dcup Y=U$ with parts of size at least~$\mu |U|>2\mu n/3$. We appeal to the maximality of~$t$ and infer
	from~\ref{it:rc:p:2} that
	\[
		e_L(X,V_1\setminus X)>2\mu^2n^2\,.
	\]
	Consequently, since $V_1\setminus X=Y\dcup W$, we have
	\[
		e_R(X,Y)
		=
		e_L(X,Y\cup W)-e_L(X,W)
		\geq
		e_L(X,V_1\setminus X) - e_L(U,W)
		\geq
		2\mu^2n^2-\mu^2n^2=\mu^2n^2,
	\]
	which implies that $R$ is $\mu$-inseparable and this concludes the proof of
	Lemma~\ref{lem:robust1}. 	
\end{proof}

Next we deduce Proposition~\ref{prop:robust} from Lemma~\ref{lem:robust1}.

\begin{proof}[Proof of Proposition~\ref{prop:robust}]		
	For $\alpha\in(0,4/9]$ set $\mu=\alpha/72$.
	We set $\l$ to be the smallest odd integer such that
	\begin{equation}\label{eq:rc:betal}
		\l>\frac{8}{\mu^2}+1
		\qquad\text{and set}\qquad
		\beta=\frac{1}{72\ell}\left(\frac{\mu}{2}\right)^{6\l}\,.
	\end{equation}
	For sufficiently large $n$ let $L=(V,E)$ be an $n$-vertex graph with
	$e(L)\geq \big(\frac{5}{9}+\alpha\big)\frac{n^2}{2}$. Moreover, let
	$U\subseteq V$ and $R=L[U]$ be the induced subgraph guaranteed by
	Lemma~\ref{lem:robust1}. In particular, 	$V(R)=U$,
	\begin{equation}\label{eq:rc:U}
		|U|\geq \left(\frac{2}{3}+\frac{\alpha}{2}\right)n\,,\quad
		e(R)\geq \left(\frac{5}{9}+\frac{\alpha}{2}\right)\frac{n^2}{2}-\frac{(n-|U|)^2}{2}\,,
		\qand
		\delta(R)\geq \mu |U|\,.
	\end{equation}

	It remains to show that $R$ is $(\beta,\l)$-robust for the choice of
	$\beta$ and $\l$ in~\eqref{eq:rc:betal}. This proof will be carried out in
	three steps. First we show that for every pair of distinct vertices
	$x$, $z\in V(R)$ there exist at least $\Omega(n^{s-1})$ $x$-$z$-walks
	in $R$ of length~$s=s(x,z)\leq \l$ (see~\eqref{eq:rc:step1} below).
	In the second step we ensure that
	$s(x,z)$ can be chosen to be odd (see~\eqref{eq:rc:step2})
	and in the last step we show that we can insist
	that the walks have length $\l$ independent of the pair $x$ and $z$.
	Noting that most of these walks will indeed be paths then concludes
	the proof.
	Below we give the details of each
	of the three steps.
	
	\subsection*{First step} For an arbitrary vertex $x\in U$ and for every integer $i\geq 1$ we define
	\[
		Y^i_x=\{y\in U\colon \text{there are at least $(\mu^4/4)^s|U|^{s-1}$ $x$-$y$-walks of length $s$
			in $R$ for some $s\leq i$}\}\,.
	\]
	For every $i\geq 2$ we have $Y^i_x\supseteq Y^{i-1}_x$ and, consequently,
	\[
		|Y^i_x|\geq |Y^1_x|\geq|N_R(x)|\geq \delta(R)\geq \mu |U|\,.
	\]
	Next we show that for every integer $i$ with $1\leq i\leq 2/\mu^2$ at least one of the following  holds:
	\begin{equation}\label{eq:rc:claim}
		\big|U\setminus Y^i_x\big|<\mu |U|
		\qquad\text{or}\qquad
		\big|Y^{i+1}_x\setminus Y^i_x\big|\geq \frac{\mu^2}{2}|U|\,.
	\end{equation}
	
	If $|U\setminus Y^i_x|\geq \mu |U|$, then the $\mu$-inseparability of~$R$
	implies
	\begin{equation*}\label{eq:rc:claim:1}
		e_L\big(Y^i_x,U\setminus Y^i_x\big)\geq \mu^2|U|^2\,.
	\end{equation*}
	This means however that at least $\mu^2 |U|/2$ vertices $U\setminus Y^i_x$
	have at least $\mu^2|U|/2$ neighbours in $Y^i_x$. For every such vertex
	in $U\setminus Y^i_x$ at least $1/i\geq \mu^2/2$ proportion of its neighbours
	in~$Y^i_x$ are connected to~$x$ by walks of the same length, which
	implies $\big|Y^{i+1}_x\setminus Y^i_x\big|\geq \mu^2|U|/2$
	and this establishes~\eqref{eq:rc:claim}.
	
	From~\eqref{eq:rc:claim} we infer that for $j=\lfloor 2/\mu^2\rfloor$ we have
	$|U\setminus Y^j_x|<\mu |U|$. Since $x\in U$ was arbitrary, the same
	conclusion holds for every vertex $z\in U$, i.e., we also have
	$|U\setminus Y^j_z|<\mu |U|$. Therefore, at least $|U|-2\mu |U|> |U|/2$ vertices~$y$
	are contained in the intersection $Y^j_x\cap Y^j_z$. Each of these vertices
	gives rise to constants $s_1$, $s_2\leq j\leq 2/\mu^2$ such that there are at least
	$(\mu^4/4)^{s_1}|U|^{s_1-1}$ $x$-$y$-walks of length $s_1$
	and there are at least $(\mu^4/4)^{s_2}|U|^{s_2-1}$ $z$-$y$-walks of length $s_2$.
	Consequently, for $s_y=s_1+s_2\geq 2$ there are at least $(\mu^4/4)^{s_y}|U|^{s_y-2}$
	$x$-$z$-walks of length $s_y$ in $R$ passing through~$y$. Repeating
	this argument for all vertices $y\in Y^j_x\cap Y^j_z$ shows that there is a subset of at least
	$\frac{|U|}{2}/\frac{4}{\mu^4}$ vertices yielding the same pair $(s_1, s_2)$ and, hence, the
	same value $s_y$. Consequently, for some
	 $s(x,z)$ with $2\leq s(x,z)\leq 4/\mu^2$ there are at least
	\begin{equation}\label{eq:rc:step1}
		\frac{\mu^4}{8}|U|\cdot \left(\frac{\mu^4}{4}\right)^{s(x,z)} |U|^{s(x,z)-2}
		\geq
		\left(\frac{\mu}{2}\right)^{6s(x,z)}|U|^{s(x,z)-1}
	\end{equation}
	$x$-$z$-walks of length $s(x,z)$ in~$R$.
	It will be convenient to define for every pair of vertices $x$, $z\in U$ the set
	\begin{equation}\label{eq:rc:step1a}
		\cS_{x,z}=\big\{s\geq 2\colon \text{there are at least $(\mu/2)^{6s}|U|^{s-1}$ $x$-$z$-walks in $R$}\big\}
	\end{equation}
	and~\eqref{eq:rc:step1} asserts $\cS_{x,z}\cap [2,4/\mu^2]\neq \emptyset$.
	 This concludes the discussion of the first step.
	
	\subsection*{Second step} We elaborate on~\eqref{eq:rc:step1} and show that we can obtain
	a similar formula with the additional restriction that $s(x,z)$ is odd.	 For that let
	$x\in U$ be arbitrary and consider the disjoint sets
	\[
		Y^{\odd}_x\dcup Y^{\even}_x\subseteq U
	\]
	defined through the parity of the integers $s(x,y)$ for which the lower bound in~\eqref{eq:rc:step1} holds for
	the number of $x$-$y$-walks in $R$, i.e.,
	\begin{align*}
		Y^{\odd}_x&=\big\{y\in U\colon
			\text{$\cS_{x,y}\cap [2,4/\mu^2]$ contains only odd integers} \big\}
	\intertext{and}
		Y^{\even}_x&=\big\{y\in U\colon
			\text{$\cS_{x,y}\cap [2,4/\mu^2]$ contains only even integers} \big\}\,.
	\end{align*}
	Moreover, we consider the set $Y^{\flex}_x$ of ``parity-wise flexible'' vertices covering
	the remainder of $U$, i.e.,
	\[
		Y^{\flex}_x
		=
		\big\{y\in U\colon
			\text{$\cS_{x,y}\cap [2,4/\mu^2+1]$ contains both odd and even integers} \big\}\,.
	\]
	Owing to the additional ``$+1$'' in the definition, the set $Y^{\flex}_x$ may not be disjoint
	from $Y^{\odd}_x\cup Y^{\even}_x$. However, all three sets together cover $U$.
	More importantly,
	the vertices $y\in Y^{\flex}_x$ connect to $x$ by many odd and many even walks of short length,
	which will allow us to ``fix'' the parity for every vertex $z\in U$ by first connecting $z$ with
	some $y\in Y^{\flex}_x$
	and then, depending on the parity of the $z$-$y$-walk, continuing by a walk of different parity to~$x$.
	Obviously, for such an approach it will be useful that $Y^{\flex}_x$ indeed contains many vertices
	and, therefore, below we show
	\begin{equation}\label{eq:rc:step2a}
		\big|Y^{\flex}_x\big|\geq \frac{n}{36}\geq \frac{|U|}{36}\,.
	\end{equation}
	For that we note that $Y^{\odd}_x\setminus Y^{\flex}_x$ induces at most $\mu |U|^2$ edges,
	as otherwise some vertex in $y\in Y^{\odd}_x\setminus Y^{\flex}_x$ would have
	at least $2\mu |U|$ neighbours in $Y^{\odd}_x$. Any such a neighbour $y'$ and its odd $x$-$y'$-walks
	can be used to build even $x$-$y$-walks of length at most $4/\mu^2+1$ and at least a $(2/\mu^2+1)^{-1}$ proportion of
	these walks would have the same length. Consequently, there would be some even integer contained in
	$\cS_{x,y}\cap[2, 4/\mu^2+1]$, which contradicts
	$y\in Y^{\odd}_x\setminus Y^{\flex}_x$. Applying the same argument to $Y^{\even}_x\setminus Y^{\flex}_x$
	tells us
	\[
		e_R\big(Y^{\odd}_x\setminus Y^{\flex}_x\big) + e_R\big(Y^{\even}_x\setminus Y^{\flex}_x\big)	
		\leq
		2\mu |U|^2\,.
	\]	
	Since, trivially, $e_R\big(Y^{\odd}_x\setminus Y^{\flex}_x,Y^{\even}_x\setminus Y^{\flex}_x\big)\leq |U|^2/4$
	and all edges of~$R$ not counted
	so far are incident with a vertex in $Y^{\flex}_x$,
	we have
	\begin{align*}
		e(R)
		&\overset{\phantom{\eqref{eq:rc:U}}}{\leq}
		\left(\frac{1}{2}+4\mu\right)\frac{|U|^2}{2}+\sum_{v\in Y^{\flex}_x} d_R(v)\,.
	\intertext{On the other hand, we have}
		e(R)
		&\overset{\eqref{eq:rc:U}}{\geq}
		\left(\frac{5}{9}+\frac{\alpha}{2}\right)\frac{n^2}{2}-\frac{(n-|U|)^2}{2}
		\,.
	\end{align*}
	For $\rho$ defined by $|U|=\rho n$ these two estimates on $e(R)$ lead to
	\[
		\frac{2}{n}|Y^{\flex}_x|
		\geq
		\frac{2}{n}\sum_{v\in Y^{\flex}_x}\frac{d_R(v)}{n}
		\geq
		\left(\frac{5}{9}+\frac{\alpha}{2}\right)-(1-\rho)^2-\left(\frac{1}{2}+4\mu\right)\rho^2
		\geq
		\rho\left(2-\frac{3}{2}\rho\right)-\frac{4}{9}\,,
	\]
	where we used the choice $\mu=\alpha/72<\alpha/8$ for the last inequality.
	Since $\rho\in(2/3,1]$, the right-hand side is minimised for $\rho=1$ and~\eqref{eq:rc:step2a}
	follows.
	
	Having established~\eqref{eq:rc:step2a} below we shall show that for every vertex $z\in U$
	there exists some odd integer $s'(x,z)\leq 8/\mu^2+1$ such that there are at least
	\begin{equation}\label{eq:rc:step2}
			\frac{1}{36}\left(\frac{\mu}{2}\right)^{6s'(x,z)+4}|U|^{s'(x,z)-1}
	\end{equation}
	$x$-$z$-walks of length $s'(x,z)$ in~$R$. In fact, for every vertex~$z$ and every
	$y\in Y^{\flex}_x$ we appeal to~\eqref{eq:rc:step1a} and obtain many
	$z$-$y$-walks of length $s(z,y)$. Since $y\in Y^{\flex}_x$,  there is some
	\[
		s(y,x)\in \cS_{y,x}\cap [2,4/\mu^2+1]
	\]
	of different parity than $s(z,y)$ and connecting
	the corresponding walks gives us
	\[
		\left(\frac{\mu}{2}\right)^{6s(z,y)}|U|^{s(z,y)-1}
		\times
		\left(\frac{\mu}{2}\right)^{6s(y,x)}|U|^{s(y,x)-1}
		=
		\left(\frac{\mu}{2}\right)^{6s(z,y)+6s(y,x)}|U|^{s(z,y)+s(y,x)-2}
	\]
	$x$-$z$-walks of odd length $s(z,y)+s(y,x)\leq 8/\mu^2+1$ passing through~$y$.
	Similarly as in the first step
	we repeat
	this argument for all vertices $y\in Y^{\flex}_x$ and conclude that there must be a subset of
	$\frac{|U|}{36}/\frac{8}{\mu^4}$ vertices $y$ leading to the same pair $\bigl(s(z, y), s(y, x)\bigr)$
	with odd sum and thus to odd walks of the same length $s'(x,z)$.
	Hence, there are at least
	\[
		\frac{|U|}{36}\cdot\frac{\mu^4}{8}\cdot\left(\frac{\mu}{2}\right)^{6s'(x,z)}|U|^{s'(x,z)-2}
	\]
	$x$-$z$-walks of length $s'(x,z)$ in $R$ and~\eqref{eq:rc:step2} follows.
	
	\subsection*{Third step} In the last step we finally show that $R$ is $(\beta,\l)$-robust.
	So far we achieved in the second step that for every pair of vertices there are many
	short walks of odd length connecting them. However, so far the length may depend on the pair
	that is connected and below we extend many walks so that they all have the same length $\l$
	independent of the pair.
	In fact,
	we shall show that for every pair of distinct vertices $x$ and $z$ in $R$ there are at least
	$2\beta |U|^{\l-1}$ $x$-$z$-walks of length $\l$ in $R$.
	
	For an arbitrary vertex $x\in U$ we consider its neighbourhood $N_R(x)$ and
	let $S_R(x)$ be its  second neighbourhood, i.e.,  the set of vertices
	connected by a walk of length two with~$x$ in~$R$. In particular, $N_R(x)$ and
	$S_R(x)$ might not be disjoint. Since $\delta(R)\geq \mu |U|$, we have
	\begin{equation}\label{eq:rc:NSx}
		\big|N_R(x)\big|\geq \mu |U|
		\qqand
		e_R\big(N_R(x),S_R(x)\big)\geq \frac{1}{2}\mu |U|\cdot\big|N_R(x)\big|\geq \frac{\mu^2}{2} |U|^2\,,
	\end{equation}
	where the factor $1/2$ takes into account that $N_R(x)$ and $S_R(x)$ may not be disjoint.
	Consequently one can show that there are subsets $N_x\subseteq N_R(x)$ and  $S_x\subseteq S_R(x)$
	of size at least $\mu^2|U|/4$
	such that for every vertex $y\in N_x$ we have
	\[
		\big|N_R(y)\cap S_x\big|\geq \frac{\mu^2}{4} |U|
	\]
	and, similarly, $\big|N_R(y')\cap N_x\big|\geq \mu^2 |U|/4$ for every $y'\in S_x$.
	Indeed the sets $N_x$ and $S_x$ exist, as otherwise we could keep deleting edges incident
	to vertices of small degree in $N_R(x)$ (resp.\ $S_R(x)$). More precisely, we consider vertices
	one by one and if $v\in N_x$ (resp.\ $S_x$) has fewer than $\mu^2 |U|/4$ neighbours in $S_x$ (resp.\ $N_x$),
	then we remove the edges between $v$ and its neighbourhood in~$S_x$ (resp.\ $N_x$).
	However, this way less than
	\[
		 \big(\big|N_R(x)\big|+\big|S_R(x)\big|\big)\cdot \frac{\mu^2}{4}|U|
		\leq
		\frac{\mu^2}{2}|U|^2
	\]
	edges would be deleted altogether, which
	by~\eqref{eq:rc:NSx} implies that the procedure ends with a non-empty subgraph with the required degree
	condition.
	Therefore, for every vertex $y'\in S_x$ and every odd integer $s''$
	there exist at least $(\mu^2/4)^{s''}|U|^{s''}$ walks of length $s''$ that start in $y'$ and
	end in $N_x\subseteq N_R(x)$.
	
	Let $z\in U$ be distinct from~$x$. For every $y'\in  S_x$
	there is an odd integer $s'(z,y')\leq 8/\mu^2+1$ such that~\eqref{eq:rc:step2} holds
	for the vertex pair $(z,y')$. Since $\l$ and $s'(z,y')$ are odd and since
	$s'(z,y')\leq 8/\mu^2+1<\l$, for the odd integer
	\[
		s''=\l-s'(z,y')-1\geq 1
	\]
	there are $(\mu^2/4)^{s''}|U|^{s''}$ walks of length $s''$ from $y'$ to some
	vertex $y\in N_x\subseteq N_R(x)$, which then extends to a $z$-$x$-walk of
	length $\l$. In other words for every $y'\in S_x$ there are
	at least
	\[
		\frac{1}{36}\left(\frac{\mu}{2}\right)^{6s'(z,y')+4}|U|^{s'(z,y')-1}
		\times
		\left(\frac{\mu^2}{4}\right)^{s''}|U|^{s''}
		\geq
		\frac{1}{36}\left(\frac{\mu}{2}\right)^{6\l-2}|U|^{\l-2}
	\]
	$x$-$z$-walks of length $\l$ in $R$ passing through~$y'$. Repeating this argument for every vertex
	$y'\in  S_x$ leads by our choice of $\beta$ in~\eqref{eq:rc:betal} on first sight to
	$2\beta\ell |U|^{\l-1}$ $x$-$z$-walks of length~$\l$. However, each walk may be counted once
	for each of its interior vertices. Thus the total number of distinct $x$-$z$-walks arising this
	way is at least $2\beta |U|^{\l-1}$ and for sufficiently large $n$
	at least half of these walks are indeed paths of length $\l$.
	Since $x$ and $z$ were arbitrary this shows that
	$R$ is $(\beta,\l)$-robust and concludes the proof of Proposition~\ref{prop:robust}.
\end{proof}

We close this section with the observation that two graphs $R$ and $R'$
on the same vertex set, obtained by applications of Proposition~\ref{prop:robust},
must share quite a few edges. This will be essential in the proof of
Theorem~\ref{thm:main} as it asserts that any pair of robust subgraphs
from two link graphs share some edges.

\begin{prop} \label{prop:intersect}
	Let $V$ be a set of $n$ vertices and let $R=(U,E)$ and $R'=(U',E')$
	be graphs on vertex sets $U$, $U'\subseteq V$.
	If for some $\alpha>0$ we have
	\[
		|U|\geq \left(\frac{2}{3}+\frac{\alpha}{2}\right) n
		\qand
		|E|\geq \left(\frac{5}{9}+\frac{\alpha}{2}\right)\frac{n^2}{2}-\frac{(n-|U|)^2}{2}
	\]
	and
	\[
		|U'|\geq \left(\frac{2}{3}+\frac{\alpha}{2}\right) n
		\qand
		|E'|\geq \left(\frac{5}{9}+\frac{\alpha}{2}\right)\frac{n^2}{2}-\frac{(n-|U'|)^2}{2}
	\]
	then  $|E\cap E'|\ge \alpha n^2/2$.
\end{prop}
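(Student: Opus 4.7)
My plan is to use a single application of inclusion--exclusion. Since $E\subseteq\binom{U}{2}$ and $E'\subseteq\binom{U'}{2}$, the union $E\cup E'$ lies in $\binom{U}{2}\cup\binom{U'}{2}$, whose size equals $\binom{|U|}{2}+\binom{|U'|}{2}-\binom{|U\cap U'|}{2}$. Therefore
\[
	|E\cap E'|
	\geq
	|E|+|E'|-\binom{|U|}{2}-\binom{|U'|}{2}+\binom{|U\cap U'|}{2}\,.
\]
Because $|U|+|U'|\geq\bigl(\tfrac{4}{3}+\alpha\bigr)n>n$, I also have $|U\cap U'|\geq|U|+|U'|-n\geq 1$, which lets me replace $\binom{|U\cap U'|}{2}$ by the lower bound $\binom{|U|+|U'|-n}{2}$.

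Writing $a=|U|$ and $b=|U'|$ and substituting the hypothesised lower bounds on $|E|$ and $|E'|$, the right-hand side becomes an explicit polynomial in $a$, $b$, $n$. Its simplification hinges on the algebraic identity
\[
	-(n-a)^2-(n-b)^2-a^2-b^2+(a+b-n)^2
	=
	-n^2-(a-b)^2\,,
\]
which one checks by direct expansion, together with the linear cancellation $a+b-(a+b-n)=n$ coming from the $\binom{\cdot}{2}=\tfrac12(\cdot)^2-\tfrac12(\cdot)$ correction terms. After collecting everything I expect to arrive at
\[
	|E\cap E'|
	\geq
	\Bigl(\tfrac{1}{18}+\tfrac{\alpha}{2}\Bigr)n^2+\tfrac{n}{2}-\tfrac{(a-b)^2}{2}\,,
\]
reducing the problem to controlling the single quantity $(a-b)^2$.

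The admissible range $a,b\in\bigl[\bigl(\tfrac{2}{3}+\tfrac{\alpha}{2}\bigr)n,\,n\bigr]$ forces $|a-b|\leq\bigl(\tfrac{1}{3}-\tfrac{\alpha}{2}\bigr)n$ and hence $\tfrac{(a-b)^2}{2}\leq\bigl(\tfrac{1}{18}-\tfrac{\alpha}{6}+\tfrac{\alpha^2}{8}\bigr)n^2$. Plugging this bound in gives
\[
	|E\cap E'|
	\geq
	\tfrac{\alpha}{2}n^2+\Bigl(\tfrac{\alpha}{6}-\tfrac{\alpha^2}{8}\Bigr)n^2+\tfrac{n}{2}
	\geq
	\tfrac{\alpha}{2}n^2\,,
\]
the last inequality holding whenever $\alpha\leq\tfrac{4}{3}$, which is automatic. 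I do not foresee any genuine obstacle: the whole proof is one instance of inclusion--exclusion followed by a short polynomial manipulation, the key structural input being that the two uniform lower bounds on $|U|$ and $|U'|$ pin the gap $|a-b|$ strictly below $n/3$, exactly matching the $\tfrac{1}{9}$ slack visible in the main inequality.
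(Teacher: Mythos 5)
Your proof is correct and is essentially the paper's own argument: both apply inclusion--exclusion to $E$ and $E'$ inside $\binom{U}{2}\cup\binom{U'}{2}$, bound $|U\cap U'|\ge |U|+|U'|-n$, and reduce the claim to the observation that $\bigl||U|-|U'|\bigr|<n/3$ forces the deficit term $(|U|-|U'|)^2/2$ below $n^2/18$. The only cosmetic difference is that you keep the linear correction terms from the binomial coefficients and isolate $|E\cap E'|$ directly, whereas the paper normalises by $n$ and discards them; the algebra and the conclusion are the same.
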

\begin{proof}
Define the real numbers $\rho$, $\rho'$, and $\eta$ by
\[
	|U|=\rho n, \quad |U'|=\rho' n, \qand |E \cap E'| = \eta \frac{n^2}2\,.
\]
The assumptions on the sizes of $U$ and $U'$ assert
\begin{equation}\label{eq:341}
	\rho,\, \rho'\in \bigl[\tfrac23+\tfrac\alpha2, 1\bigr]\,.
\end{equation}
Similarly, the assumptions on $|E|$ and $|E'|$ and the sieve formula yield
\begin{equation}\label{eq:342}
	|E\cup E'|\ge \left(\frac{10}9+\alpha-(1-\rho)^2-(1-\rho')^2-\eta\right)\frac{n^2}2\,.
\end{equation}
On the other hand, we have
\begin{align*}
	|E\cup E'|
	& \le
	\left|\binom{U}2\cup\binom{U'}2\right|
	=
	\binom{|U|}2+\binom{|U'|}2-\binom{|U\cap U'|}2\,.
\end{align*}
Now $|U\cap U'|\ge (\rho+\rho'-1)n$ and by~\eqref{eq:341} the expression $\rho+\rho'-1$
is positive, so
\[
	\big|E\cup E'\big| \le \left(\rho^2+(\rho')^2-(\rho+\rho'-1)^2\right)\frac{n^2}2\,.
\]
Together with~\eqref{eq:342} this gives
\[
	\rho^2+(\rho')^2-(\rho+\rho'-1)^2
	\ge
	\frac{10}9+\alpha-(1-\rho)^2-(1-\rho')^2-\eta\,,
\]
i.e.,
\[
	(\rho-\rho')^2 +\eta \ge \frac19+\alpha\,.
\]
But~\eqref{eq:341} implies $(\rho-\rho')^2<1/9$, and thus we have indeed
$\eta\ge\alpha$.
\end{proof}

\begin{cor}\label{cor:34}
	Given Setup~\ref{setup} we have $|E(R_u)\cap E(R_v)|\ge \alpha n^2/2$
	for all $u, v\in V$.
\end{cor}

\begin{proof}
	By Proposition~\ref{prop:robust}\ref{it:rc1} and~\ref{it:rc2} the graphs $R_u$
	and $R_v$ satisfy the assumptions of Proposition~\ref{prop:intersect}.
\end{proof}

\section{Connectable pairs}
\label{sec:connecting}

In this section we establish the Connecting Lemma (Proposition~\ref{lem:con}) and, therefore,
justify the notion of connectable pairs from Definition~\ref{def:connectable}
by showing that such pairs indeed can be connected by tight paths in~$H$.
\begin{proof}[Proof of Proposition~\ref{lem:con}] Let $\zeta>0$ be given and
set
\begin{equation}\label{eq:theta}
	\theta=\frac{1}{2}\left(\frac{2}{3}\right)^{\l^2-1}\left(\frac{\alpha\beta\zeta}{2}\right)^{\l+1}.
\end{equation}
Let $(x, y)$ and $(z, w)$ be two disjoint $\zeta$-connectable pairs of vertices.
We recall Definition~\ref{def:connectable}, set $t=\lceil \zeta n\rceil$, and let
\[
	\{u(1), \ldots, u(t)\}\subseteq U_{xy}
	\quad \text{ as well as } \quad
	\{v(1), \ldots, v(t)\}\subseteq U_{zw}
\]
be arbitrary $t$-subsets of $U_{xy}$ and $U_{zw}$, respectively.

Let us define
\[
	I_{ab}=\bigl\{i\in [t]\colon ab\in E(R_{u(i)}) \cap E(R_{v(i)})\bigr\}
\]
for any ordered pair $(a, b)$ of vertices from $V$. Then double counting shows that
\begin{equation}\label{eq:sumIab}
	\sum_{(a, b)\in V^2}|I_{ab}|=\sum_{i=1}^t \big|E(R_{u(i)}) \cap E(R_{v(i)})\big|\ge \frac\alpha 2  n^2t\,,
\end{equation}
where the last inequality follows from Corollary~\ref{cor:34}.
We intend to estimate the number $T$ of all  tight $(x, y)$-$(z, w)$-walks of the form
\begin{equation}\label{eq:con-path}
	xy u(i_1) r_1 r_2 u(i_2) \ldots r_{\ell-2} r_{\ell-1} u(i_{(\ell+1)/2)}) a b
	v(j_1) s_1 s_2 v(j_2) \ldots  s_{\ell^-2} s_{\ell-1} v(j_{(\ell+1)/2)}) z w\,,
\end{equation}
where tight walks are defined similarly like tight paths, but vertices are allowed to repeat.
Such walks can be represented by
 sextuples
\[
	\bigl(\seq{\imath}, \seq{\jmath}, \seq{r}, \seq{s}, a, b\bigr)\in
	[t]^{(\ell+1)/2}\times [t]^{(\ell+1)/2}\times V^{\ell-1}\times V^{\ell-1}\times V\times V.
\]

Intuitively, these walks connect $(x, y)$ to $(z, w)$
via an arbitrary ``middle pair'' $(a, b)$ (see Figure~\ref{fig:connect}).
The construction of such walks can be reduced
to a $2$-uniform problem in link graphs by demanding that for every
$k\in[(\ell+1)/2]$ we have:
\begin{enumerate}[label=\alabel]
\item\label{it:con1} $i_k, j_k\in I_{ab}$,
\item\label{it:con2}  $yr_1 \ldots r_{\ell-1}a$ is a path in $R_{u(i_k)}$,
\item\label{it:con3}  and $bs_1 \ldots s_{\ell-1}z$ is a path in $R_{v(j_k)}$.
\end{enumerate}

\begin{figure}[ht]
\centering
\begin{tikzpicture}[scale=1.05]

	\coordinate (x) at (0,0);
	\coordinate (y) at (0.9,0);
	
	\coordinate (r1) at (1.8,0.8);
	\coordinate (r2) at (2.6,1.7);
	\coordinate (r3) at (3.2,2.7);
	\coordinate (r4) at (3.7,3.8);
	
	\coordinate (r6) at (4.4,5.7);
	\coordinate (r7) at (5.1,6.5);

	\coordinate (a) at (6,7);
	\coordinate (b) at (7,7);
	
	\coordinate (s1) at (7.9,6.5);
	\coordinate (s2) at (8.6,5.7);
	\coordinate (s3) at (9.0,4.6);
	\coordinate (s4) at (9.5,3.3);
	
	\coordinate (s6) at (10.4,1.7);
	\coordinate (s7) at (11.2,0.8);

	\coordinate (z) at (12.1,0);
	\coordinate (w) at (13,0);

	\coordinate (u1) at (0.7,3.3);
	\coordinate (u2) at (1.8,5.7);
	\coordinate (u9) at (2.9,8.1);
	
	\coordinate (v1) at (10.1,8.1);
	\coordinate (v2) at (11.2,5.7);
	\coordinate (v9) at (12.3,3.3);

	\begin{pgfonlayer}{front}
		
		\foreach \i in {x, y, z, w, a, b, r1, r2, r3, r4, r6, r7, s1, s2, s3, s4, s6, s7}
			\fill  (\i) circle (2pt);
			
		\foreach \i in {u1, u2, u9, v1, v2, v9}{
			\draw[blue!75!black, very thick]  (\i) circle (2pt);
			\fill[blue!75!white]  (\i) circle (2pt);
		}
		
		\node at (0,-0.4) {\textcolor{green!60!black}{$x$}};
		\node at (0.9,-0.43) {\textcolor{green!60!black}{$y$}};
		\node at (6,6.58) {		{$a$}};
		\node at (7,6.63) {		{$b$}};
		\node at (12.1,-0.4) {\textcolor{green!60!black}{$z$}};
		\node at (13,-0.4) {\textcolor{green!60!black}{$w$}};
		
		\node at (2.1,0.5) {$r_1$};
		\node at (10.9,0.5) {$s_{\l-1}$};

		\node at (2.9,1.4) {$r_2$};
		\node at (10,1.4) {$s_{\l-2}$};
		
		\node at (4.9,5.5) {$r_{\l-2}$};
		\node at (8.3,5.5) {$s_{2}$};

		\node at (5.6,6.2) {$r_{\l-1}$};
		\node at (7.6,6.2) {$s_{1}$};
		
		\node at (3.6,2.5) {$r_3$};
		\node at (4.1,3.7) {$r_4$};
		
		\node at (8.7,4.4) {$s_3$};
		\node at (9.2,3.1) {$s_4$};
		
		\node at (1.0,6.5) {\textcolor{blue!75!black}{$U_{xy}$}};
		\node at (0.85,3.7) {\footnotesize \textcolor{blue!75!black}{$u(i_1)$}};
		\node at (1.95,6.1) {\footnotesize \textcolor{blue!75!black}{$u(i_2)$}};
		\node at (1.9,8.15) {\footnotesize \textcolor{blue!75!black}{$u(i_{\frac{\l+1}{2}})$}};
		
		\node at (12.0,6.5) {\textcolor{blue!75!black}{$U_{zw}$}};
		\node at (10.9,8.25) {\footnotesize \textcolor{blue!75!black}{$v(j_1)$}};
		\node at (11.25,6.1) {\footnotesize \textcolor{blue!75!black}{$v(j_2)$}};
		\node at (12.1,3.75) {\footnotesize \textcolor{blue!75!black}{$v(j_{\frac{\l+1}{2}})$}};

	\end{pgfonlayer}
	
	\begin{pgfonlayer}{background}
		\draw[green, line width=2pt] (x) -- (y);
		\draw[black!40!white, line width=2pt] (a) -- (b);
		\draw[green, line width=2pt] (z) -- (w);
		
		\draw[black!40!white, line width=2pt] (y) -- (r1);
		\draw[black!40!white, line width=2pt] (r1) -- (r2);
		\draw[black!40!white, line width=2pt] (r2) -- (r3);
		\draw[black!40!white, line width=2pt] (r3) -- (r4);
		
		\draw[black!40!white, line width=2pt, decorate, decoration={snake,segment length=9,amplitude=4}] (r4) -- (r6);
		
		\draw[black!40!white, line width=2pt] (r6) -- (r7);
		\draw[black!40!white, line width=2pt] (r7) -- (a);
		
		\draw[black!40!white, line width=2pt] (b) -- (s1);
		\draw[black!40!white, line width=2pt] (s1) -- (s2);
		\draw[black!40!white, line width=2pt] (s2) -- (s3);
		\draw[black!40!white, line width=2pt] (s3) -- (s4);
		
		\draw[black!40!white, line width=2pt, decorate, decoration={snake,segment length=9,amplitude=4}] (s4) -- (s6);
		
		\draw[black!40!white, line width=2pt] (s6) -- (s7);
		\draw[black!40!white, line width=2pt] (s7) -- (z);
	\end{pgfonlayer}
	
	\draw[rotate around={-26:(u2)},blue!75!black, line width=2pt] (u2) ellipse (18pt and 3.4cm);
	\fill[blue!75!white,opacity=0.2,rotate around={-26:(u2)}] (u2) ellipse (18pt and 3.4cm);
	
	\draw[rotate around={26:(v2)},blue!75!black, line width=2pt] (v2) ellipse (18pt and 3.4cm);
	\fill[blue!75!white,opacity=0.2,rotate around={26:(v2)}] (v2) ellipse (18pt and 3.4cm);
	
	\qedge{(u1)}{(y)}{(x)}{4.5pt}{1.5pt}{red!70!black}{red!70!black,opacity=0.2};
	\qedge{(u1)}{(r1)}{(y)}{4.5pt}{1.5pt}{red!70!black}{red!70!black,opacity=0.2};
	\qedge{(u1)}{(r2)}{(r1)}{4.5pt}{1.5pt}{red!70!black}{red!70!black,opacity=0.2};
	
	\qedge{(u2)}{(r2)}{(r1)}{4.5pt}{1.5pt}{red!70!black}{red!70!black,opacity=0.2};
	\qedge{(u2)}{(r3)}{(r2)}{4.5pt}{1.5pt}{red!70!black}{red!70!black,opacity=0.2};
	\qedge{(u2)}{(r4)}{(r3)}{4.5pt}{1.5pt}{red!70!black}{red!70!black,opacity=0.2};
	
	\qedge{(u9)}{(r7)}{(r6)}{4.5pt}{1.5pt}{red!70!black}{red!70!black,opacity=0.2};
	\qedge{(u9)}{(a)}{(r7)}{4.5pt}{1.5pt}{red!70!black}{red!70!black,opacity=0.2};
	\qedge{(u9)}{(b)}{(a)}{4.5pt}{1.5pt}{red!70!black}{red!70!black,opacity=0.2};
	
	\qedge{(v1)}{(b)}{(a)}{4.5pt}{1.5pt}{red!70!black}{red!70!black,opacity=0.2};
	\qedge{(v1)}{(s1)}{(b)}{4.5pt}{1.5pt}{red!70!black}{red!70!black,opacity=0.2};
	\qedge{(v1)}{(s2)}{(s1)}{4.5pt}{1.5pt}{red!70!black}{red!70!black,opacity=0.2};
	
	\qedge{(v2)}{(s2)}{(s1)}{4.5pt}{1.5pt}{red!70!black}{red!70!black,opacity=0.2};
	\qedge{(v2)}{(s3)}{(s2)}{4.5pt}{1.5pt}{red!70!black}{red!70!black,opacity=0.2};
	\qedge{(v2)}{(s4)}{(s3)}{4.5pt}{1.5pt}{red!70!black}{red!70!black,opacity=0.2};
	
	\qedge{(v9)}{(w)}{(z)}{4.5pt}{1.5pt}{red!70!black}{red!70!black,opacity=0.2};
	\qedge{(v9)}{(z)}{(s7)}{4.5pt}{1.5pt}{red!70!black}{red!70!black,opacity=0.2};
	\qedge{(v9)}{(s7)}{(s6)}{4.5pt}{1.5pt}{red!70!black}{red!70!black,opacity=0.2};
	
\end{tikzpicture}
\caption{Connecting the \textcolor{green!60!black}{$\zeta$-connectable pairs $(x,y)$}
	and \textcolor{green!60!black}{$(z,w)$} through
	middle pair $(a,b)$
	using vertices from the sets \textcolor{blue!75!black}{$U_{xy}$} and \textcolor{blue!75!black}{$U_{zw}$}.}
\label{fig:connect}
\end{figure}
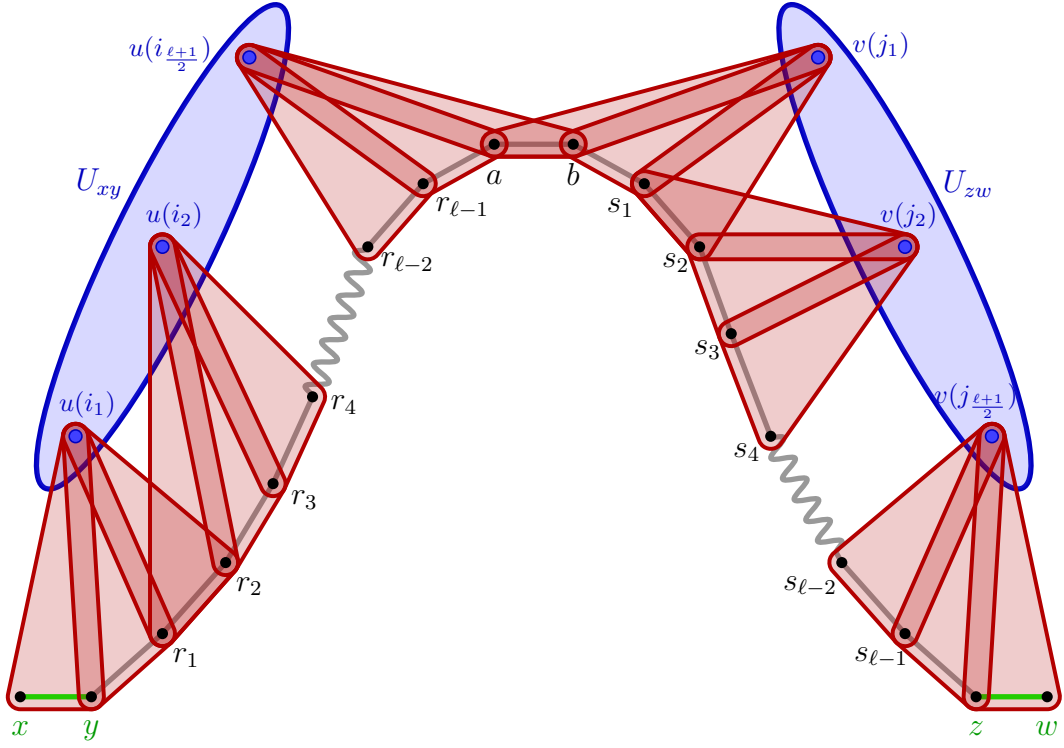

In other words, if $T^*$ denotes the number of sextuples
$\bigl(\seq{\imath}, \seq{\jmath}, \seq{r}, \seq{s}, a, b\bigr)$ satisfying
the conditions~\ref{it:con1},~\ref{it:con2}, and~\ref{it:con3}, then
$T\ge T^*$. Note that the hyperedges $xyu(i_1)$ and $v(j_{(\l+1)/2})zw$ are not forced by~\ref{it:con1}
and~\ref{it:con2}, but are a direct consequence of $u(i_1)\in U_{xy}$ and $v(j_{(\l+1)/2})\in U_{zw}$.
Similarly, the required hyperedges $u(i_{(\l+1)/2})ab$ and $abv(j_1)$ are a consequence of~\ref{it:con1}.
On the other hand, conditions~\ref{it:con1}\,--\,\ref{it:con3} imply several additional hyperedges, which are not
required for the $(x,y)$-$(z,w)$-walk. Hence, indeed we have $T\geq T^*$.
Below we shall show
\begin{equation}\label{eq:sigma-low}
	T^*\ge 2\theta n^{3\ell+1}\,.
\end{equation}
Since at most $O(n^{3\ell})$ of the corresponding walks~\eqref{eq:con-path}
can fail to be a path (due to the presence
of repeated vertices), this trivially implies Proposition~\ref{lem:con}.

As a first step towards the proof of~\eqref{eq:sigma-low} we will fix for a while
the middle vertices $a$ and $b$ and study the number $T_{ab}$ of possibilities
to complete a walk of the desired kind by an appropriate choice of the $3\ell-1$
remaining vertices. Evidently
\begin{equation}\label{eq:sigma-prod}
	T_{ab}=R_{ab}S_{ab}\,,
\end{equation}
where $R_{ab}$ denotes the number of possibilities to choose
$i_1, \ldots, i_{(\ell+1)/2}\in I_{ab}$ and vertices $r_1, \ldots, r_{\ell-1}\in V$
such that \ref{it:con2} holds and $S_{ab}$ has a similar meaning with respect to
the numbers $j_k$, the vertices $s_k$, and property \ref{it:con3}. Given any sequence
$\seq{r}=(r_1, \ldots, r_{\ell-1})\in V^{\ell-1}$ of vertices, we set
\[
	D(\seq{r}\,)=\big\{i\in  I_{ab}\colon y\seq{r}a \text{ is a path in } R_{u(i)}\big\}\,.
\]
Then
\[
	R_{ab}=\sum_{\seq{r}\in V^{\ell-1}}|D(\seq{r}\,)|^{(\ell+1)/2}
\]
and from the $(\beta, \ell)$-robustness of $R_{u(i)}$ combined with property~\ref{it:rc1}
of Proposition~\ref{prop:robust} applied for every $i\in I_{ab}$,
we infer by means of double counting that
\[
	\sum_{\seq{r}\in V^{\ell-1}}|D(\seq{r}\,)|\ge |I_{ab}|\cdot \beta \left(\frac{2}{3}n\right)^{\ell-1}\,.
\]
Thus a standard convexity argument shows
\[
	R_{ab}\ge n^{\ell-1}|I_{ab}|^{(\ell+1)/2}\beta^{(\ell+1)/2}\left(\frac{2}{3}\right)^{(\l^2-1)/2}\,.
\]
Applying this argument also to $S_{ab}$, using~\eqref{eq:sigma-prod}, and
summing over all $(a,b)\in V^2$ we deduce
\begin{align*}
	T^*
	=
	\sum_{(a, b)\in V^2}T_{ab}
	&\overset{\phantom{\eqref{eq:sumIab}}}{\ge}
	\left(\frac{2}{3}\right)^{\l^2-1}\beta^{\ell+1}n^{2(\ell-1)} \times \sum_{(a, b)\in V^2} |I_{ab}|^{\ell+1}\\
	&\overset{\eqref{eq:sumIab}}{\geq}
	\left(\frac{2}{3}\right)^{\l^2-1}\beta^{\ell+1}n^{2(\ell-1)} \times \left(\frac\alpha 2 t\right)^{\l+1} n^2
	\,,
\end{align*}
where we used Jensen's inequality in the last step.
Recalling the choice of $\theta$ in~\eqref{eq:theta}
and that $t=\lceil\zeta n\rceil$
entails~\eqref{eq:sigma-low} and this concludes the proof.
\end{proof}

We close this section with the following immediate consequence of Definition~\ref{def:connectable},
which we shall use at several occasions in the subsequent sections.

\begin{fact}\label{fact:easy}
Given Setup~\ref{setup} and $\zeta>0$, there are at most~$\zeta n^3$ triples
$(x, y, z)\in V^3$ with $xy\in E(R_z)$ such that the pair $xy$ fails to
be $\zeta$-connectable.
\end{fact}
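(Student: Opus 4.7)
The plan is a direct counting argument that exploits the fact that the quantity $|U_{xy}|$ simultaneously controls both the number of admissible $z$'s in the triple and the connectability of the pair $xy$. For any ordered pair $(x,y) \in V^2$ with $x \ne y$, the number of vertices $z \in V$ satisfying $xy \in E(R_z)$ is, by the very definition of $U_{xy}$ in Definition~\ref{def:connectable}, exactly $|U_{xy}|$. If in addition the (unordered) pair $xy$ fails to be $\zeta$-connectable, then, again by that definition, we have $|U_{xy}| < \zeta n$. Summing the contributions over all ordered pairs that fail connectability, I would bound the number of bad triples by
\[
	\sum_{\substack{(x,y) \in V^2 \\ xy \text{ not } \zeta\text{-connectable}}} |U_{xy}|
	\;<\;
	n^2 \cdot \zeta n
	\;=\;
	\zeta n^3\,,
\]
which is exactly the asserted bound.

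There is no real obstacle here; the claim essentially unpacks the definition of connectability. The only minor bookkeeping point is the passage between ordered and unordered pairs when invoking Definition~\ref{def:connectable}, but this costs at most a factor of $2$ that is comfortably absorbed by the crude upper bound $n^2$ on the number of ordered pairs $(x,y)$. Hence the entire argument reduces to the single inequality displayed above.
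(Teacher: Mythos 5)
Your argument is correct and coincides with the paper's own proof: both observe that a non-$\zeta$-connectable pair $xy$ satisfies $|U_{xy}|<\zeta n$ by Definition~\ref{def:connectable}, hence contributes fewer than $\zeta n$ admissible vertices $z$, and then sum over the at most $n^2$ ordered pairs. Nothing further is needed.
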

\begin{proof}
If an (unordered) pair $xy$ fails to be $\zeta$-connectable, then it follows from Definition~\ref{def:connectable}
that $|U_{xy}|\leq \zeta n$ and, hence, $xy$ is an edge in $R_z$ for at most $\zeta |V|=\zeta n$ vertices~$z\in V$.
Since there are at most $n^2$ ordered pairs $(x,y)\in V^2$, the fact follows.
\end{proof}

\section{Reservoir}\label{sec:reservoir}
In this section we focus on the Reservoir lemma (Proposition~\ref{prop:reservoir}).
The existence of such a reservoir set is established by a
standard probabilistic argument.

\begin{proof}[Proof of Proposition~\ref{prop:reservoir}]
	Consider a random subset $\cR\subseteq V$ with elements included
	independently with probability
	\[
		p=\left(1-\frac{1}{10\l}\right)\theta^2_{*}\,.
	\]
	Consequently,~$|\cR|$ is binomially distributed and we infer from
	Chernoff's inequality that
	\begin{equation}\label{eq:sizeRa}
		\PP\big(|\cR|<\theta_*^2n/2\big)=
		o(1)\,.
	\end{equation}
	Moreover, since $\theta^2n\geq(4/3)^{\frac{1}{3\l+1}}pn\geq(1+c)\EE[|\cR|]$ for some sufficiently small $c=c(\l)>0$, we
	also have
	\begin{equation}\label{eq:sizeRb}
		\PP\big(|\cR|>\theta_*^2n\big)
		\leq
		\PP\Big(|\cR|>(4/3)^{\frac{1}{3\l+1}}pn\Big)
		=
		o(1)\,.
	\end{equation}
	Recall that for every disjoint pair $(x,y)$ and $(z,w)$ of $\zeta_{**}$-connectable pairs
	Proposition~\ref{lem:con} ensures the existence of at least $\theta_{**}n^{3\l+1}$
	tight $(x,y)$-$(z,w)$-paths of length $3(\l+1)$ (having $3\l+5$ vertices in total). Let
	$X=X((x,y),(z,w))$ be a random variable counting the number of $(x,y)$-$(z,w)$-paths
	with all $3\l+1$ internal vertices in $\cR$.
	Consequently
	\begin{equation}\label{eq:EEX}
		\EE[X]
		\geq
		p^{3\l+1}\cdot\theta_{**}n^{3\l+1}\,.
	\end{equation}
	Since including or not including a particular vertex into $\cR$ affects the random variable~$X$
	by at most $(3\l+1) n^{3\l}$, the Azuma--Hoeffding inequality (see, e.g.,~\cite{JLR00}*{Corollary~2.27})
	asserts
	\begin{align}
		\PP\big(X\leq \tfrac{2}{3}\theta_{**}(pn)^{3\l+1}\big)
		&\overset{\eqref{eq:EEX}}{\leq}
		\PP\big(X\leq \tfrac{2}{3}\EE[X]\big)\nonumber\\
		&\overset{\phantom{\eqref{eq:EEX}}}{\leq}
		\exp\left(-\frac{\EE[X]^2}{18\cdot n\cdot ((3\l+1)n^{3\l})^2}\right)=\exp\big(-\Omega(n)\big)\,.\label{eq:RAzuma}
	\end{align}
	Since there are at most $n^4$  pairs of $\zeta_{**}$-connectable pairs that we have to consider,
	in view of~\eqref{eq:sizeRb}, the union bound combined with~\eqref{eq:RAzuma},
	implies that a.a.s.\ the set $\cR$ has the property that
	for every pair of connectable
	pairs at least  $\theta_{**}|\cR|^{3\l+1}/2$ tight connecting paths
	have all internal vertices in~$\cR$. In addition, due to~\eqref{eq:sizeRa} and~\eqref{eq:sizeRb}
	a.a.s.\ the set~$\cR$ also satisfies $\theta^2_*n/2\leq|\cR|\leq\theta_*^2n$. Consequently,
	a reservoir set $\cR$ with all required properties indeed exists.
\end{proof}

In Section~\ref{sec:longpath} we will frequently need to connect $\zeta_{**}$-connectable pairs
through the reservoir. Whenever such a connection is made, the part of the reservoir that may
still be used for further connections shrinks by $3\ell+1$ vertices. Although $\Omega(n)$ such
connections are needed, we shall be able to keep the reservoir almost intact throughout this
process, which in turn guarantees that there will always be some permissible connections left.

\begin{lemma}\label{lem:use-reservoir}
		Given Setup~\ref{setup2} with a reservoir set $\cR\subseteq V$,
		let $\cR'\subseteq \cR$ be an arbitrary subset of size at most $2\theta^2_{**}n$.
		Then for all disjoint pairs of $\zeta_{**}$-connectable pairs~$(x,y)$ and~$(z,w)$
		there is a tight $(x, y)$-$(z, w)$-path of length $3(\ell+1)$ in $H$ whose internal vertices
		belong to $\cR\setminus \cR'$.
\end{lemma}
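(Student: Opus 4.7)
The plan is a straightforward counting argument on top of Proposition~\ref{prop:reservoir}: we count the tight $(x,y)$-$(z,w)$-paths with internal vertices in $\cR$, subtract those that meet the forbidden set $\cR'$, and check via the hierarchy~\eqref{eq:consthierall} that the remainder is still positive.

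First I would invoke Proposition~\ref{prop:reservoir} to obtain at least $\tfrac{1}{2}\theta_{**}|\cR|^{3\l+1}$ tight $(x,y)$-$(z,w)$-paths of length $3(\l+1)$ whose $3\l+1$ internal vertices lie in $\cR$. Call this collection $\cP$. A path $P\in\cP$ is \emph{bad} if some internal vertex of $P$ lies in $\cR'$. Choosing which of the $3\l+1$ internal positions is occupied by a vertex of $\cR'$, picking that vertex, and then extending arbitrarily inside $\cR$, the number of bad paths is at most
\[
	(3\l+1)\,|\cR'|\,|\cR|^{3\l}\,.
\]

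Using the bounds $|\cR'|\le 2\theta_{**}^2 n$ and $|\cR|\ge \theta_*^2 n/2$ from the hypothesis and Proposition~\ref{prop:reservoir}, it suffices to verify
\[
	(3\l+1)\cdot 2\theta_{**}^2 n \;<\; \frac{\theta_{**}}{2}\,|\cR|
	\quad\Longleftarrow\quad
	4(3\l+1)\theta_{**} \;<\; \theta_*^2\,.
\]
This last inequality is ensured by the hierarchy~\eqref{eq:consthierall}, since $\theta_*\gg\zeta_{**}\gg\theta_{**}$ and $\l$ is a constant fixed much earlier. Consequently the number of bad paths is strictly smaller than $|\cP|$, so some path in $\cP$ avoids $\cR'$ entirely; its internal vertices then lie in $\cR\setminus\cR'$, as required.

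The only potential obstacle is making sure the constants line up, but this is exactly what the chain $\alpha\gg\beta,1/\l,\zeta_*\gg\theta_*\gg\zeta_{**}\gg\theta_{**}\gg 1/n$ was set up to guarantee; no further hypotheses on $H$ beyond Proposition~\ref{prop:reservoir} are needed.
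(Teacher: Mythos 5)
Your argument is correct and is essentially the paper's own proof: take the $\theta_{**}|\cR|^{3\l+1}/2$ paths guaranteed by Proposition~\ref{prop:reservoir}, bound the number meeting $\cR'$ by $(3\l+1)\,|\cR'|\,|\cR|^{3\l}$, and conclude positivity from the hierarchy~\eqref{eq:consthierall}. (Minor arithmetic: with $|\cR|\ge\theta_*^2n/2$ the sufficient condition is $8(3\l+1)\theta_{**}<\theta_*^2$ rather than $4(3\l+1)\theta_{**}<\theta_*^2$, but this factor of $2$ is immaterial since $\theta_*\gg\theta_{**}$.)
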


\begin{proof}
	Recalling $|\cR|\geq \theta_{*}^2n/2$ and the hierarchy~\eqref{eq:consthierall}
	yields $|\cR'|\leq 2\theta^2_{**}n\leq \frac{\theta_{**}}{8\l}|\cR|$. Moreover,
	every given vertex in $\cR'$ is an internal vertex of at most $(3\l+1)|\cR|^{3\l}$ tight
	$(x, y)$-$(z, w)$-paths of length $3(\ell+1)$ in $H$ whose internal vertices
	belong to~$\cR$. Consequently,  there are still at least
	\[
		\frac{\theta_{**}}{2}|\cR|^{3\l+1}-|\cR'|\cdot (3\l+1)|\cR|^{3\l}
		\geq
		\frac{\theta_{**}}{2}|\cR|^{3\l+1}-\frac{\theta_{**}}{8\l}\cdot (3\l+1)|\cR|^{3\l+1}
		>
		0
	\]
	such paths with all internal vertices in~$\cR\setminus\cR'$.
\end{proof}

\section{Absorbing path}\label{sec:absorbing}
In this section we prove Proposition~\ref{prop:absorbingP}, that is, we establish the existence of an absorbing path. The
following special hypergraph (the so-called $v$-absorber, see Figure~\ref{sfig:absA})
will allow us to absorb a given vertex~$v$ into a path containing a $v$-absorber (see Figure~~\ref{sfig:absB}).

\begin{dfn}\label{dfn-vabs} Given Setup~\ref{setup2} and a vertex $v\in V$,
a $9$-tuple $(a, b, c, d, z, x, y, y', x')\in (V\setminus\{v\})^9$ of distinct vertices such that
\begin{enumerate}[label=\rmlabel]
	\item\label{it:vabs1}   $zab, zbc, zcd, zxy, zyy', zy'x', xyy', yy'x'\in E$, and
	\item\label{it:vabs2}  the pairs $ab$, $cd$, $xy$, and $y'x'$
	are $\zeta_*$-connectable
\end{enumerate}
is called  \emph{a $v$-absorber} if, in addition, $vab$, $vbc$, $vcd\in E$.
\end{dfn}

\begin{figure}[ht]
\centering
\begin{subfigure}[b]{0.55\textwidth}
\begin{tikzpicture}[scale=1.2]
	\coordinate (v) at (0,0);
	
	\coordinate (x1) at (1.4,-2.4);
	\coordinate (x2) at (2,-0.8);
	\coordinate (x3) at (2,0.8);
	\coordinate (x4) at (1.4,2.4);
	
	\coordinate (z) at (4,0);
	
	\coordinate (y1) at (5.4,-2.4);
	\coordinate (y2) at (6,-0.8);
	\coordinate (y3) at (6,0.8);
	\coordinate (y4) at (5.4,2.4);

	\begin{pgfonlayer}{front}
		\fill (v) circle (2pt);
		\fill (z) circle (2pt);
		
		\foreach \i in {1,2,3,4}{
			\fill  (x\i) circle (2pt);
			\fill  (y\i) circle (2pt);
		}

		\node at ($(v)+(180:0.32)$) {\footnotesize $v$};
		
		\node at ($(x2)+(-50:0.34)$) {\footnotesize $b$};
		\node at ($(x3)+(45:0.34)$) {\footnotesize $c$};
		\node at ($(x1)+(-25:0.33)$) {\footnotesize $a$};
		\node at ($(x4)+(25:0.36)$) {\footnotesize $d$};
		
		\node at ($(z)+(100:0.57)$) {\footnotesize $z$};
		
		\node at ($(y1)+(-10:0.39)$) {\footnotesize $x'$};
		\node at ($(y2)+(-5:0.39)$) {\footnotesize $y'$};
		\node at ($(y3)+(5:0.39)$) {\footnotesize $y$};
		\node at ($(y4)+(10:0.39)$) {\footnotesize $x$};
		
	\end{pgfonlayer}
	
	\begin{pgfonlayer}{background}
		\draw[green, line width=2pt] (x1) -- (x2);
		\draw[green, line width=2pt] (x3) -- (x4);
		\draw[green, line width=2pt] (y1) -- (y2);
		\draw[green, line width=2pt] (y3) -- (y4);
	\end{pgfonlayer}

	\qedge{(v)}{(x2)}{(x1)}{4.5pt}{1.5pt}{red!70!white}{red!70!white,opacity=0.2};
	\qedge{(v)}{(x3)}{(x2)}{4.5pt}{1.5pt}{red!70!white}{red!70!white,opacity=0.2};
	\qedge{(v)}{(x4)}{(x3)}{4.5pt}{1.5pt}{red!70!white}{red!70!white,opacity=0.2};
	\qedge{(z)}{(x1)}{(x2)}{4.5pt}{1.5pt}{red!70!black}{red!70!black,opacity=0.2};
	\qedge{(z)}{(x2)}{(x3)}{4.5pt}{1.5pt}{red!70!black}{red!70!black,opacity=0.2};
	\qedge{(z)}{(x3)}{(x4)}{4.5pt}{1.5pt}{red!70!black}{red!70!black,opacity=0.2};
	\qedge{(z)}{(y2)}{(y1)}{6.5pt}{1.5pt}{red!70!white}{red!70!white,opacity=0.2};
	\qedge{(z)}{(y3)}{(y2)}{6.5pt}{1.5pt}{red!70!white}{red!70!white,opacity=0.2};
	\qedge{(z)}{(y4)}{(y3)}{6.5pt}{1.5pt}{red!70!white}{red!70!white,opacity=0.2};
	\qedge{(y1)}{(y3)}{(y2)}{4.0pt}{1.5pt}{red!70!black}{red!70!black,opacity=0.2};
	\qedge{(y4)}{(y3)}{(y2)}{4.0pt}{1.5pt}{red!70!black}{red!70!black,opacity=0.2};
	\end{tikzpicture}
	\caption{$v$-absorber with all hyperedges}
	\label{sfig:absA}
\end{subfigure}\begin{subfigure}[b]{0.45\textwidth}
\begin{tikzpicture}[scale=1.1]
	
	\draw[opacity=0] (-3.2,7.1) rectangle (3.2,13.3);

	\coordinate (v1) at (97:13);
	
	\coordinate (a1) at (101:12);
	\coordinate (b1) at (99:12);
	\coordinate (c1) at (95:12);
	\coordinate (d1) at (93:12);
	
	\coordinate (z1) at (97:12);
	
	\coordinate (x1) at (86:12);
	\coordinate (y1) at (84:12);
	\coordinate (yp1) at (82:12);
	\coordinate (xp1) at (80:12);
	
	\draw[very thick, decorate, decoration={snake,segment length=10,amplitude=3}, red!70!black]
		(x1) arc [start angle=86,radius=12, delta angle=7];
		
	\draw[very thick, decorate, decoration={snake,segment length=10,amplitude=3}, red!70!black]
		(a1) arc [start angle=101,radius=12, delta angle=4];
		
	\draw[very thick, decorate, decoration={snake,segment length=10,amplitude=3}, red!70!black]
		(xp1) arc [start angle=80,radius=12, delta angle=-5];
	
	\begin{pgfonlayer}{front}
		\foreach \i in {v1,a1,b1,c1,d1,z1,x1,y1,xp1,yp1}
			\fill (\i) circle (2pt);
		\node at (97:13.22) {\footnotesize $v$};
		
		\node at (101:12.32) {\footnotesize $a$};
		\node at (99:12.35) {\footnotesize $b$};
		\node at (95:12.32) {\footnotesize $c$};
		\node at (93:12.35) {\footnotesize $d$};
		
		\node at (97:11.57) {\footnotesize $z$};
		
		\node at (86:12.32) {\footnotesize $x$};
		\node at (84:12.35) {\footnotesize $y$};
		\node at (81.5:12.41) {\footnotesize $y'$};
		\node at (79.5:12.38) {\footnotesize $x'$};
		
	\end{pgfonlayer}
	
	\qedge{(z1)}{(a1)}{(b1)}{4.5pt}{1.5pt}{red!70!black}{red!70!black,opacity=0.2};
	\qedge{(z1)}{(b1)}{(c1)}{4.5pt}{1.5pt}{red!70!black}{red!70!black,opacity=0.2};
	\qedge{(z1)}{(c1)}{(d1)}{4.5pt}{1.5pt}{red!70!black}{red!70!black,opacity=0.2};
	
	\qedge{(yp1)}{(x1)}{(y1)}{4.5pt}{1.5pt}{red!70!black}{red!70!black,opacity=0.2};
	\qedge{(xp1)}{(y1)}{(yp1)}{4.5pt}{1.5pt}{red!70!black}{red!70!black,opacity=0.2};
	
	\begin{pgfonlayer}{background}
		\draw[green, line width=2pt] (a1) -- (b1);
		\draw[green, line width=2pt] (c1) -- (d1);
		\draw[green, line width=2pt] (x1) -- (y1);
		\draw[green, line width=2pt] (yp1) -- (xp1);
	\end{pgfonlayer}
	
	\draw[black!50!white, line width=1.5pt, line cap=round, shorten <=3.8pt,shorten >=6.5pt,->] (v1) -- (z1);
	\draw[black!50!white, line width=1.5pt, line cap=round, ->] (96.2:11.55) -| (83:11.78);
	
	\coordinate (v2) at ($(97:12)-(0,3.2)$);
	
	\coordinate (a2) at ($(101:12)-(0,3.2)$);
	\coordinate (b2) at ($(99:12)-(0,3.2)$);
	\coordinate (c2) at ($(95:12)-(0,3.2)$);
	\coordinate (d2) at ($(93:12)-(0,3.2)$);
	
	\coordinate (z2) at ($(83:12)-(0,3.2)$);
	
	\coordinate (x2) at ($(87:12)-(0,3.2)$);
	\coordinate (y2) at ($(85:12)-(0,3.2)$);
	\coordinate (yp2) at ($(81:12)-(0,3.2)$);
	\coordinate (xp2) at ($(79:12)-(0,3.2)$);
	
	\draw[very thick, decorate, decoration={snake,segment length=10,amplitude=3}, red!70!black]
		(x2) arc [start angle=86,radius=12, delta angle=6];
		
	\draw[very thick, decorate, decoration={snake,segment length=10,amplitude=3}, red!70!black]
		(a2) arc [start angle=101,radius=12, delta angle=4];
		
	\draw[very thick, decorate, decoration={snake,segment length=10,amplitude=3}, red!70!black]
		(xp2) arc [start angle=79,radius=12, delta angle=-4];
	
	\begin{pgfonlayer}{front}
		\foreach \i in {v2,a2,b2,c2,d2,z2,x2,y2,xp2,yp2}
			\fill (\i) circle (2pt);
			
		\node at ($(97:12)-(0,3.55)$) {\footnotesize $v$};

		\node at ($(101:12)-(0,2.87)$) {\footnotesize $a$};
		\node at ($(99:12)-(0,2.85)$) {\footnotesize $b$};
		\node at ($(95:12)-(0,2.88)$) {\footnotesize $c$};
		\node at ($(93:12)-(0,2.85)$) {\footnotesize $d$};
		
		\node at ($(83.2:12)-(0,3.55)$) {\footnotesize $z$};
		
		\node at ($(87:12)-(0,2.88)$) {\footnotesize $x$};
		\node at ($(85:12)-(0,2.85)$) {\footnotesize $y$};
		\node at ($(80.5:12)-(0,2.79)$) {\footnotesize $y'$};
		\node at ($(78.5:12)-(0,2.82)$) {\footnotesize $x'$};
		
	\end{pgfonlayer}
	
	\qedge{(v2)}{(a2)}{(b2)}{4.5pt}{1.5pt}{red!70!white}{red!70!white,opacity=0.2};
	\qedge{(v2)}{(b2)}{(c2)}{4.5pt}{1.5pt}{red!70!white}{red!70!white,opacity=0.2};
	\qedge{(v2)}{(c2)}{(d2)}{4.5pt}{1.5pt}{red!70!white}{red!70!white,opacity=0.2};
	
	\qedge{(z2)}{(x2)}{(y2)}{4.5pt}{1.5pt}{red!70!white}{red!70!white,opacity=0.2};
	\qedge{(xp2)}{(z2)}{(yp2)}{4.5pt}{1.5pt}{red!70!white}{red!70!white,opacity=0.2};
	\qedge{(yp2)}{(z2)}{(y2)}{4.5pt}{1.5pt}{red!70!white}{red!70!white,opacity=0.2};
	
	\begin{pgfonlayer}{background}
		\draw[green, line width=2pt] (a2) -- (b2);
		\draw[green, line width=2pt] (c2) -- (d2);
		\draw[green, line width=2pt] (x2) -- (y2);
		\draw[green, line width=2pt] (yp2) -- (xp2);
	\end{pgfonlayer}
	\end{tikzpicture}
	\caption{$v$-absorber \textcolor{red!60!black}{before}/\textcolor{red!60!white}{after} absorption}
	\label{sfig:absB}
    \end{subfigure}
\caption{A $v$-absorber, where the \textcolor{green!60!black}{$\zeta_*$-connectable pairs} are indicated in green,
	hyperedges used  \textcolor{red!60!black}{before absorption} of $v$ are dark red and
	hyperedges used \textcolor{red!60!white}{after absorption} of $v$ are light red.}
\label{fig:absorber}
\end{figure}
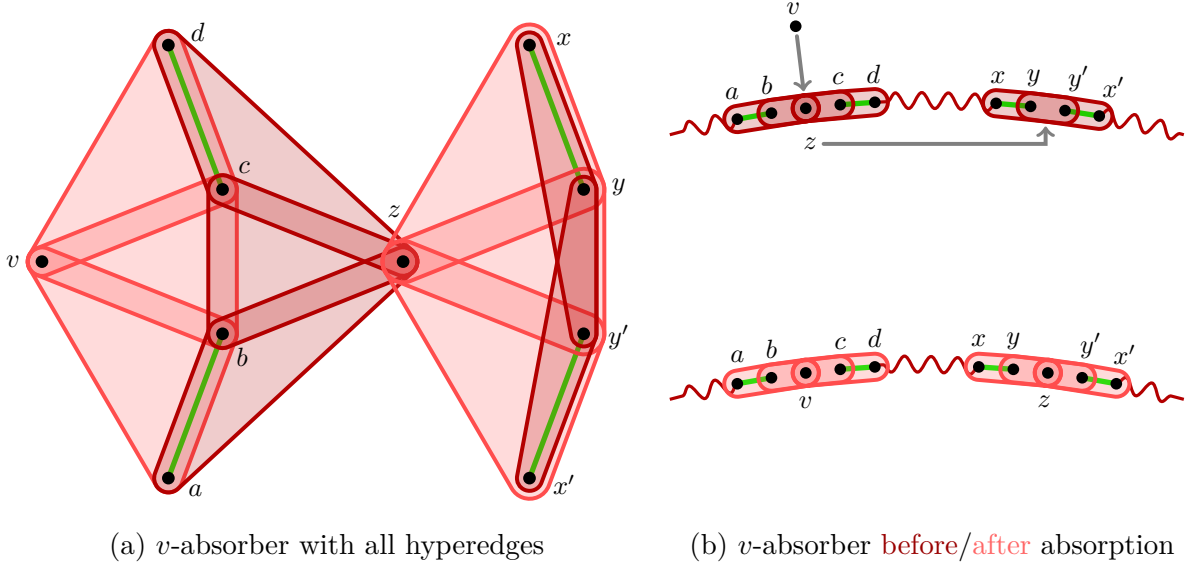
An important property of these configurations proved in
Lemma~\ref{lem:count-absorbers} below asserts
that for every vertex $v\in V$ there exist $\Omega(n^9)$ such $v$-absorbers.
For standard probabilistic reasons this will lead us to a family $\cF$ of
$\Omega(n)$ set-wise mutually disjoint $9$-tuples containing for each $v\in V$ at least
$\Omega(n)$ absorbers (see Lemma~\ref{lem:choose-F}).
Owing to condition~\ref{it:vabs2} of Definition \ref{dfn-vabs},
we may then use the Connecting Lemma (Proposition~\ref{lem:con})
for connecting them, i.e., for producing an {\it absorbing path} $P_A$ of length $\Omega(n)$,
which contains for every $v$-absorber $(a, b, c, d, z, x, y, y', x')\in \cF$
the subpaths $abzcd$ and $xyy'x'$.
If at the end of the proof of Theorem~\ref{thm:main} the need to absorb $v$ arises,
we shall simply replace in $P_A$, for one such $v$-absorber, the subpaths $abzcd$ and $xyy'x'$ by $abvcd$ and $xyzy'x'$ (see Figure~\ref{sfig:absB}).

Towards the goal of estimating the number of $v$-absorbers from below, we shall at first
only deal with  configurations consisting of the five vertices $z$, $x$, $y$, $y'$, and $x'$.
In the lemma that follows we do not pay attention to connectability demands yet.
For potential future references we point out that its proof  requires only a less restrictive minimum degree
condition than the one provided by Theorem~\ref{thm:main}.

\begin{lemma}~\label{lem:28}
	For every hypergraph $H=(V, E)$ with $n$ vertices and $\delta(H)\ge \frac{6}{11}\cdot\frac{n^2}2$
	there exist at least $n^5/28^4$ quintuples $(x, y, y', x', z)\in V^5$
	with the following properties:
	\begin{enumerate}[label=\rmlabel]
	 \item\label{it:labs1} $xyz, yy'z, x'y'z, xyy', yy'x'\in E$;
	\item\label{it:labs2} $d(y, z)> \frac{5}{12}n$.	\end{enumerate}
\end{lemma}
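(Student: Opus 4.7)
I would select the five vertices in stages, starting with those controlled by~\ref{it:labs2}.

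\emph{Step 1.} By double counting,
\[
\sum_{(y,z)\in V^2,\ y\ne z} d(y,z)\;=\;6\,|E(H)|\;\ge\;\frac{6n^3}{11},
\]
while pairs violating~\ref{it:labs2} contribute at most $\frac{5n}{12}\cdot n(n-1)\le \frac{5n^3}{12}$ to this sum, so
\[
\sum_{(y,z)\colon d(y,z)>5n/12}d(y,z)\;\ge\;\frac{17\,n^3}{132}.
\]
Read as a count, this gives at least $17n^3/132$ ordered triples $(y,z,y')$ of distinct vertices with $yy'z\in E$ and $d(y,z)>5n/12$; call these \emph{good triples}.

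\emph{Step 2.} For a good triple $(y,z,y')$, the pairs $(x,x')$ making $(x,y,y',x',z)$ satisfy~\ref{it:labs1} are exactly those with $x\in N(y,z)\cap N(y,y')$ and $x'\in N(y',z)\cap N(y,y')$. Writing
\[
A(y,z,y')=|N(y,z)\cap N(y,y')|\quad\text{and}\quad B(y,z,y')=|N(y',z)\cap N(y,y')|,
\]
the number of valid quintuples is, up to $O(n^4)$ corrections for vertex coincidences,
\[
Q\;=\;\sum_{\text{good}}A(y,z,y')\cdot B(y,z,y').
\]
Note that $A$ is the codegree of $z$ and $y'$ in the link graph $L_y$ (Definition~\ref{dfn:link}), while $B$ is the codegree of $z$ and $y$ in $L_{y'}$.

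\emph{Step 3.} To bound $Q$ from below, I would regroup by fixing the unordered pair $\{y,y'\}$ and the set $M=N(y,y')$; its contribution is
\[
\sum_{z\in M,\ d(y,z)>5n/12}\,d_{L_y[M]}(z)\cdot d_{L_{y'}[M]}(z),
\]
with $L_v[M]$ the induced subgraph of $L_v$ on $M$. Using $|E(L_v)|\ge 3n^2/11$ and the elementary inequality
\[
e(L_v[M])\;\ge\;|E(L_v)|-\binom{n-|M|}{2}-|M|(n-|M|),
\]
both $e(L_y[M])$ and $e(L_{y'}[M])$ are $\Omega(|M|^2)$ whenever $|M|$ is linear in $n$. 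A Chebyshev-type sum inequality in $z\in M$ then transforms the inner double sum into a product of edge counts divided by $|M|$, and summing over those $(y,y')$ for which $|M|$ is of the right order would yield $Q=\Omega(n^5)$. Tracking the constants carefully through Steps~1--3 is expected to give the explicit lower bound $n^5/28^4$.

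\emph{Main obstacle.} The hard part is Step~3: combining the two codegrees $A$ and $B$ into a single product lower bound. Chebyshev's sum inequality has the right shape but requires that the sequences $d_{L_y[M]}(\cdot)$ and $d_{L_{y'}[M]}(\cdot)$ be similarly ordered on $M$, which is not automatic. I would address this either by the involution $(y,y')\mapsto(y',y)$, which swaps $A$ and $B$ and, on a symmetric subfamily of good triples, makes the two sequences interchangeable, or by aggregating across many pairs $(y,y')$ so that anti-correlated configurations cannot dominate. Since the target $n^5/28^4$ is a comfortably small multiple of $n^5$, the resulting constant-tracking should succeed.
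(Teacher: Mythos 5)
Your Step 1 is sound, but the argument breaks at the passage to Steps 2 and 3: controlling $d(y,z)$ alone gives no handle on $A=|N(y,z)\cap N(y,y')|$ or $B=|N(y',z)\cap N(y,y')|$, since both quantities also depend on $d(y,y')$ and $d(y',z)$, about which your good triples say nothing. Concretely, Step 3 needs $e(L_y[M])=\Omega(|M|^2)$ for $M=N(y,y')$, and your inequality $e(L_v[M])\ge |E(L_v)|-\binom{n-|M|}{2}-|M|(n-|M|)$ with $|E(L_v)|\ge\frac{3}{11}n^2$ is nontrivial only when $|M|>\sqrt{5/11}\,n\approx 0.674n$. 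The hypothesis forces the \emph{average} pair degree to be only about $\frac{6}{11}n\approx 0.545n$, so it is entirely consistent with the assumptions that no pair $\{y,y'\}$ has degree above $0.674n$ (e.g., a quasirandom hypergraph of density slightly above $6/11$ has all pair degrees near $0.55n$); in that case every instance of your edge-count bound is vacuous and $Q$ receives no lower bound at all. On top of this, the Chebyshev step requires $z\mapsto d_{L_y[M]}(z)$ and $z\mapsto d_{L_{y'}[M]}(z)$ to be similarly ordered on $M$, and neither proposed fix works: the involution $(y,y')\mapsto(y',y)$ merely swaps $A$ and $B$ while the summand $A\cdot B$ is already symmetric under that swap, so nothing is gained, and for a fixed pair $\{y,y'\}$ the two codegree functions really can be supported on essentially disjoint sets of $z$.

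What is missing is simultaneous control of all three pair degrees of the edge $yy'z$, and this is precisely how the paper proceeds. It sets $f(y,y',z)=\frac{n}{d(y,y')}+\frac{n}{d(y,z)}+\frac{n}{d(y',z)}$, observes $\sum_{e\in E}f(e)=n|\partial H|\le\frac{n^3}{2}$, and calls an edge \emph{central} if $f(e)\le\frac{28}{5}$; combined with $|E|\ge\frac{n^3}{11}$ this yields more than $n^3/28^2$ central edges. For a central edge ordered so that $d(y,y')\ge d(y,z)\ge d(y',z)$, the Cauchy--Schwarz inequality applied to $\frac{2n}{d(y,y')}+\frac{n}{d(y',z)}\le\frac{28}{5}$ gives $d(y,y')+d(y',z)>\frac{29}{28}n$, whence inclusion--exclusion yields $A,B\ge\frac{n}{28}$, and $f(e)\le\frac{28}{5}$ together with $d(y,y')\le n$ gives $d(y,z)>\frac{5}{12}n$. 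Each central edge then contributes at least $(n/28)^2$ choices of $(x,x')$, which is the stated bound $n^5/28^4$. To rescue your route you would need some analogue of this step --- for instance, first restricting to edges all three of whose pair degrees exceed a fixed fraction of $n$ --- which is exactly the role the function $f$ plays.
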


\begin{proof}
	We consider the function $f\colon E\to \RR$ defined by
	\[
		f(x,y,z)=\frac{n}{d(x,y)}+\frac{n}{d(x,z)}+\frac{n}{d(y,z)}
	\]
	and note that by double counting we have
	\begin{equation}\label{eq:trick}
		\sum_{xyz\in E}f(x,y,z)=n\cdot |\partial H|\leq \frac{n^3}2\,,
	\end{equation}
	where $\partial H$ denotes the set of those pairs in $V^{(2)}$
	that are contained in at least one edge of~$H$.
	An edge $e\in E(H)$ is said to be {\it central} if $f(e)\le \frac{28}{5}$.
	In view of~\eqref{eq:trick} the set $C$ of central edges satisfies
	$\tfrac{28}{5}|E\setminus C|\le \tfrac{n^3}{2}$, i.e.,
	$|E\setminus C|\le \tfrac{5}{56}n^3$.
	On the other hand, the minimum degree condition
	imposed on $H$ yields $|E|\ge \tfrac{1}{11}n^3$ and thus we have
	\begin{equation}\label{count-central}
		|C|=|E|-|E\setminus C|\ge \frac{n^3}{11}-\frac{5n^3}{56}=\frac{n^3}{11\cdot 56}
		>\frac{n^3}{28^2}\,.
	\end{equation}
	Next we will show the following statement.
	\begin{claim}
	If $yy'z$ is a central edge with
	\begin{equation}\label{eq:central-order}
		d(y, y')\ge d(y, z)\ge d(y', z)\,,
	\end{equation}
	then
	\begin{equation}\label{eq:xx'}
	|N(y, z)\cap N(y, y')|\ge \frac{n}{28}, \quad
	|N(y', z)\cap N(y, y')|\ge \frac{n}{28}\,,
	\end{equation}
	and~\ref{it:labs2} of Lemma~\ref{lem:28} holds.
	\end{claim}
	
	\begin{proof}
	Let $yy'z$ be a central edge satisfying~\eqref{eq:central-order}.
	Due to $f(y, y', z)\le \tfrac{28}{5}$ we have
	\[
		\frac{2n}{d(y, y')}+\frac n{d(y', z)}\le\frac{28}{5}\,.
	\]
	Moreover, the Cauchy--Schwarz inequality yields
	\[
		\left(\frac2{d(y, y')}+\frac1{d(y', z)}\right)\bigl(d(y, y')+d(y', z)\bigr)
		\ge (\sqrt{2}+1)^2>\frac{29}{5}\,.
	\]
	Hence
$$	d(y, y')+d(y, z)\ge d(y, y')+d(y', z)>\tfrac{29}{28}n,$$ which implies~\eqref{eq:xx'}.

	Finally, $f(y, y', z)\le \tfrac{28}{5}$, $d(y,y')\le n$, and~\eqref{eq:central-order} lead to
	\[
		\frac{28}5
		\ge \frac{n}{d(y, y')}+\frac{n}{d(y, z)}+\frac{n}{d(y', z)}
		\ge 1+\frac{2n}{d(y, z)}\,,
	\]
	which proves~\ref{it:labs2} of Lemma~\ref{lem:28}.
	\end{proof}
	
	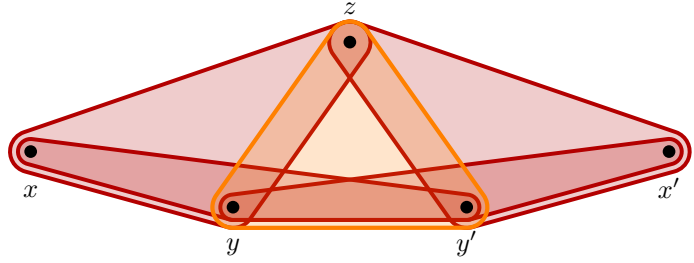
\begin{figure}[ht]
	\centering
	\begin{tikzpicture}[scale=1.2]
		
	\coordinate (z) at (0,-3cm);
	
	\coordinate (y1) at (230:5.5cm);
	\coordinate (y2) at (255:5cm);
	\coordinate (y3) at (285:5cm);
	\coordinate (y4) at (310:5.5cm);

	\begin{pgfonlayer}{front}
		\fill (z) circle (2pt);
		
		\foreach \i in {1,2,3,4}
			\fill  (y\i) circle (2pt);
			
		\node at ($(z)+(90:0.38)$) {\footnotesize $z$};
		
		\node at ($(y1)+(-90:0.44)$) {\footnotesize $x$};
		\node at ($(y2)+(-90:0.44)$) {\footnotesize $y$};
		\node at ($(y3)+(-90:0.40)$) {\footnotesize $y'$};
		\node at ($(y4)+(-90:0.40)$) {\footnotesize $x'$};
		
	\end{pgfonlayer}
	
	\qedge{(z)}{(y2)}{(y1)}{6.5pt}{1.5pt}{red!70!black}{red!70!black,opacity=0.2};
	\qedge{(z)}{(y4)}{(y3)}{6.5pt}{1.5pt}{red!70!black}{red!70!black,opacity=0.2};
	\qedge{(y1)}{(y3)}{(y2)}{4.0pt}{1.5pt}{red!70!black}{red!70!black,opacity=0.2};
	\qedge{(y4)}{(y3)}{(y2)}{4.0pt}{1.5pt}{red!70!black}{red!70!black,opacity=0.2};
	\qedge{(z)}{(y3)}{(y2)}{6.5pt}{1.6pt}{red!50!yellow}{red!50!yellow,opacity=0.2};
	\end{tikzpicture}
	\caption{Quintuple $(x,y,y',x',z)$ from Lemma~\ref{lem:28}
		with \textcolor{red!50!yellow}{central} edge
		\textcolor{red!50!yellow}{$zyy'$}.}
	\label{fig:central}
	\end{figure}
	
	Having thus established the above claim, we continue with the proof of Lemma~\ref{lem:28}
	(see Figure~\ref{fig:central}).
	To this end we remark that for every central edge~$yy'z$
	satisfying~\eqref{eq:central-order} the estimates~\eqref{eq:xx'} imply that there
	are at least $\tfrac{n}{28}$ choices of~$x$ and at least
	  $\tfrac{n}{28}$ choices of~$x'$ such that~\ref{it:labs1} of Lemma~\ref{lem:28} holds.
	Applying this argument to all central edges and taking~\eqref{count-central}
	into account we deduce the existence of
	at least $\frac{n^5}{28^4}$ quintuples $(x, y, y', x', z)\in V^5$
	with the desired properties.
\end{proof}

Next we prove that there are $\Omega(n)$ vertices  which are capable of playing
the r\^ole of $z$ in many absorbers.

\begin{dfn}\label{dfn:absorbable} Given Setup~\ref{setup2}, a
vertex $z\in V$ is said to be \emph{absorbable}
if there exist at least $\tfrac{n^4}{2^{21}}$ quadruples $(x, y, y', x')\in V^4$
such that
\begin{enumerate}[label=\alabel]
	\item\label{it:abs1}  the five triples
	$xyz$, $yzy'$, $zy'x'$, $xyy'$, and $yy'x'$ belong to~$E$,
	\item\label{it:abs2} and the pairs $xy$, $y'x'$ are $\zeta_*$-connectable.
\end{enumerate}
\end{dfn}

\begin{lemma}\label{lem:number-abs}
Given Setup~\ref{setup2},
there exist at least $\tfrac{n}{2^{21}}$ absorbable vertices.
\end{lemma}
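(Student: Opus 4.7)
The plan is to deduce Lemma~\ref{lem:number-abs} from Lemma~\ref{lem:28} together with Fact~\ref{fact:easy} by a pigeonhole argument over the coordinate $z$. Since $5/9>6/11$, the hypothesis $\delta(H)\geq(5/9+\alpha)n^2/2$ comfortably exceeds the $(6/11)n^2/2$ threshold of Lemma~\ref{lem:28}, which therefore produces at least $n^5/28^4$ quintuples $(x,y,y',x',z)\in V^5$ satisfying the five edge conditions \ref{it:labs1} together with the degree condition $d(y,z)>5n/12$. Such a quintuple automatically satisfies clause~\ref{it:abs1} of Definition~\ref{dfn:absorbable}, so the remaining task is to show that for most of them clause~\ref{it:abs2} also holds, i.e.\ that both $xy$ and $y'x'$ are $\zeta_*$-connectable.

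To bound the quintuples in which $xy$ fails to be $\zeta_*$-connectable (the case of $y'x'$ being symmetric), I would split on whether $xy\in E(R_z)$; since $xyz\in E$ forces $xy\in E(L_z)$, this amounts to asking whether $\{x,y\}\subseteq V(R_z)$. If $xy\in E(R_z)$, then Fact~\ref{fact:easy} (applied with $\zeta_*$) provides at most $\zeta_*n^3$ bad triples $(x,y,z)$, and each such triple extends to at most $n^2$ quintuples via the choice of $(y',x')$, contributing at most $\zeta_*n^5$; for $\zeta_*$ sufficiently small this is negligible. If $\{x,y\}\not\subseteq V(R_z)$, I would invoke property~\ref{it:rc2} of Proposition~\ref{prop:robust} to bound the $L_z$-edges incident to $V\setminus V(R_z)$, and combine it with the constraint $d(y,z)>5n/12$ (which forces $y$ to be an atypical high-link-degree vertex outside $V(R_z)$) to show that this case contributes only a small constant fraction of $n^5$.

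Provided the total bad-quintuple count is bounded by $n^5/28^4-n^5/2^{20}$, at least $n^5/2^{20}$ good quintuples remain. A standard pigeonhole then completes the proof: if fewer than $n/2^{21}$ vertices were absorbable, the total number of good quintuples would be at most $(n/2^{21})\cdot n^4+n\cdot(n^4/2^{21})=n^5/2^{20}$, contradicting the lower bound. The main technical obstacle lies in the second bad-quintuple case, where a naive estimate already yields $\Theta(n^5)$ and is therefore far too crude to fit inside the slack $n^5/28^4-n^5/2^{20}\approx 7\cdot 10^{-7}n^5$. The delicate point is to exploit the degree lower bound from \ref{it:labs2} of Lemma~\ref{lem:28} in tandem with the tight edge control provided by property~\ref{it:rc2}, and the freedom built into the hierarchy $1\gg\alpha\gg\zeta_*$ is precisely what lets one tune the constants so that this balancing goes through.
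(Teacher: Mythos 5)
Your proposal is correct and follows essentially the same route as the paper: Lemma~\ref{lem:28} supplies the $n^5/28^4$ quintuples, Fact~\ref{fact:easy} and Proposition~\ref{prop:robust}\,\ref{it:rc2} (the latter combined with $d(y,z)>\tfrac{5}{12}n$, which forces any $y\notin V(R_z)$ to send at least $n/12$ crossing edges of $L_z$ into $V(R_z)$, so at most $3\alpha n$ such $y$ exist per $z$) control the exceptional quintuples, and a pigeonhole over $z$ finishes. The one caveat is that the pair $y'x'$ is not literally symmetric to $xy$, since Lemma~\ref{lem:28} gives no lower bound on $d(y',z)$; the paper handles this by noting that once $y\in V(R_z)$, each of $x$, $y'$, $x'$ falling outside $V(R_z)$ forces one of the $L_z$-edges $xy$, $yy'$, $x'y'$ to cross between $V(R_z)$ and its complement, so this case is likewise absorbed into the $O(\alpha)n^5$ bound from Proposition~\ref{prop:robust}\,\ref{it:rc2}.
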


\begin{proof}
Let $A\subseteq V^5$ denote the set of all quintuples $(x, y, y', x', z)$ satisfying
the conclusion of Lemma \ref{lem:28}, which then states that
\[
	|A|\ge \frac{n^5}{28^4}\,.
\]

We intend to show that for ``most'' of these quintuples the pairs
$xy$ and $x'y'$ are \mbox{$\zeta_*$-connectable}. This will then imply
Lemma~\ref{lem:number-abs} in view of an easy counting argument.

As we shall verify below, the following
three sets of ``exceptional'' quintuples are small:
\begin{align*}
	Q_1 &=\{(x, y, y', x', z)\in A\colon y\not\in V(R_z)\}\,, \\
	Q_2 &=\{(x, y, y', x', z)\in A\colon y\in V(R_z) \text{ but }
			\{x, x', y'\}\not\subseteq V(R_z)\}\,, \\
\intertext{and}
    Q_3 &=\{(x, y, y', x', z)\in A\colon xy, x'y'\in E(R_z)
    		\text{ but one of these pairs is not $\zeta_*$-connectable}\}\,.
\end{align*}

If $(x, y, y', x', z)\in Q_1$, then by clause~\ref{it:labs2} of Lemma~\ref{lem:28}
we have $d_{L_z}(y)\ge \tfrac{5n}{12}=\bigl(\tfrac13+\tfrac1{12}\bigr)n$ and due
to $|V(R_z)|\ge \tfrac23n$
it follows that $y$ is incident to at least $\tfrac n{12}$ edges in the link graph~$L_z$
running from $V\setminus V(R_z)$ to $V(R_z)$. Owing to condition~\ref{it:rc2}
from Proposition~\ref{prop:robust} there are for each $z\in V$ at most
$3\alpha n$ vertices $y$ with this property and, consequently, we have
\[
	|Q_1|\le 3\alpha n^5\,.
\]

Similarly if $(x, y, y', x', z)\in Q_2$, then at least one of the pairs
$xy$, $yy'$, or $x'y'$ connects~$V(R_z)$ to its complement in the link graph~$L_z$,
which shows
\[
	|Q_2|\le \tfrac64\alpha n^5\,.
\]

Moreover, the case $\zeta=\zeta_*$ of Fact~\ref{fact:easy} leads to
\[
	|Q_3|\le 2\zeta_* n^5\,.
\]
Finally, taking $1\gg \alpha, \zeta_*$ into account we get
\[
	|A\setminus (Q_1\cup Q_2\cup Q_3)|
	\ge \left(\frac 1{28^4}-\frac{9}{2}\alpha-2\zeta_*\right)n^5
	>\frac{n^5}{2^{20}}\,.
\]

Definition~\ref{dfn:absorbable} guarantees that for at most $\tfrac{n^5}{2^{21}}$ of the
quintuples
\[
	(x, y, y', x', z)\in A\setminus (Q_1\cup Q_2\cup Q_3)
\]
the vertex~$z$
can fail to be absorbable. Conversely every absorbable vertex can account for at most $n^4$
such quintuples. Thus there are indeed at least $\tfrac{n}{2^{21}}$ absorbable vertices.	
\end{proof}

It remains to consider the other part of our absorbers, i.e., the six hyperedges spanned by
$a$, $b$, $c$, $d$ together with $v$ and $z$ (see Figure~\ref{fig:absorber}).

\begin{lemma}\label{lem:abcd}
Given Setup~\ref{setup2}, for every vertex $v\in V$ there are at least $\alpha^4n^5$ quintuples $(a, b, c, d, z)\in V^5$
such that
\begin{enumerate}[label=\rmlabel]
\item\label{it:abcd1} $vab, vbc, vcd, zab, zbc, zcd\in E$,
\item\label{it:abcd2} $ab$ and $cd$ are $\zeta_*$-connectable,
\item\label{it:abcd3} and $z$ is absorbable.
\end{enumerate}
\end{lemma}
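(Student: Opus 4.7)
My plan is to fix $v\in V$ and, for each absorbable $z$, to count $3$-edge paths $a$-$b$-$c$-$d$ whose three edges all lie in the common edge set of the robust link graphs $R_v$ and $R_z$. Any such path immediately satisfies condition~\ref{it:abcd1}, since every edge $\{a',b'\}\in E(R_v)\cap E(R_z)$ gives both $va'b'\in E(H)$ and $za'b'\in E(H)$; condition~\ref{it:abcd3} is built into the choice of $z$, and only condition~\ref{it:abcd2} has to be verified by pruning.

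First, Lemma~\ref{lem:number-abs} supplies a set $Z\subseteq V\setminus\{v\}$ of at least $n/2^{22}$ absorbable vertices. For every $z\in Z$ the pair $R_v,R_z$ meets the hypotheses of Proposition~\ref{prop:intersect} (by the output of Proposition~\ref{prop:robust}), so the auxiliary graph $J_z=(V,\,E(R_v)\cap E(R_z))$ carries at least $\alpha n^2/2$ edges. I then count ordered walks $(a,b,c,d)$ of length $3$ in $J_z$: writing $A_z$ for its adjacency matrix, a Blakley--Roy style inequality
\[
  \mathbf{1}^{\top} A_z^3 \mathbf{1}
  \;\ge\;
  \frac{(2\,e(J_z))^3}{n^2}
\]
(or an equivalent Cauchy--Schwarz/convexity computation) yields at least $\alpha^3 n^4$ such walks in each $J_z$, and therefore at least $\alpha^3 n^5/2^{22}$ walks when summed over $Z$.

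It only remains to discard walks that repeat a vertex or whose starting or ending pair fails to be $\zeta_*$-connectable. The number of degenerate walks is $O(n^4)$ in total, which is negligible. For each walk violating condition~\ref{it:abcd2}, the offending end-pair is an edge of $E(R_z)$ that is \emph{not} $\zeta_*$-connectable; by Fact~\ref{fact:easy} the number of such triples is at most $\zeta_* n^3$, and each extends to at most $n^2$ walks in $J_z$, contributing no more than $2\zeta_* n^5$ bad walks. Since $\alpha\gg\zeta_*$ in the hierarchy~\eqref{eq:consthierall}, the surviving count comfortably exceeds $\alpha^4 n^5$.

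The main obstacle is exactly this last pruning step: had one worked inside the bulkier common link graph $E(L_v)\cap E(L_z)$ instead of inside the robust common graph $E(R_v)\cap E(R_z)$, Fact~\ref{fact:easy} would not apply and the connectability losses could in principle dominate the main term. Routing all counting through edges of $R_v$ and $R_z$ is what makes the connectability trade-off go through uniformly in $z$.
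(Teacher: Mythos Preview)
Your argument is correct and is essentially the paper's own proof: both fix an absorbable $z$, use Proposition~\ref{prop:intersect} to get $|E(R_v)\cap E(R_z)|\ge \alpha n^2/2$, apply the Blakley--Roy/Sidorenko inequality for $3$-edge paths to obtain $\alpha^3 n^4$ quadruples, multiply by the $\Omega(n)$ absorbable vertices from Lemma~\ref{lem:number-abs}, and then prune the non-$\zeta_*$-connectable end-pairs via Fact~\ref{fact:easy}. The only cosmetic differences are that you take $n/2^{22}$ absorbable vertices (to exclude $v$) and remove degenerate walks explicitly, neither of which is needed here since the lemma does not ask for distinct entries and distinctness is handled later in Lemma~\ref{lem:count-absorbers}.
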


\begin{proof}
For every vertex $v\in V$ and every fixed absorbable vertex $z\in V$ Corollary~\ref{cor:34}
tells us $|E(R_v)\cap E(R_z)|\ge\alpha \frac{n^2}2$ and a result due to
Blakley and Roy~\cite{BR} (asserting the validity of Sidorenko's conjecture~\cites{ES82,Si84}
for paths) entails that there are at least $\alpha^3n^4$ quadruples $(a, b, c, d)\in V^4$
forming a three-edge walk in both graphs $R_v$ and $R_z$. Together with Lemma \ref{lem:number-abs}
this shows that there are at least $\frac{\alpha^3}{2^{21}}n^5$ quintuples
$(a, b, c, d, z)\in V^5$ satisfying properties~\ref{it:abcd1} and~\ref{it:abcd3} of Lemma~\ref{lem:abcd},
and $ab, cd\in E(R_v)\cap E(R_z)$ instead of property~\ref{it:abcd2}. As a consequence of
Fact~\ref{fact:easy}, however, there are at most $2\zeta_* n^5$ such quintuples
for which one of these two pairs fails to be $\zeta_*$-connectable.
As $1\gg \alpha \gg \zeta_*$ implies $\tfrac{\alpha^3}{2^{21}}-2\zeta_*>\alpha^4$,
the lemma follows.
\end{proof}

Lemma~\ref{lem:abcd} easily implies that there are $\Omega(n^9)$
$v$-absorbers for every vertex $v\in V$. In addition we can
also ensure that these absorbers are outside the reservoir~$\cR$.

\begin{lemma}\label{lem:count-absorbers}
Given Setup~\ref{setup2},
for every $v\in V$ the number of $v$-absorbers in $(V\setminus \cR)^9$
is at least $\alpha^5n^9$.
\end{lemma}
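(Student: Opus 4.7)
The plan is to assemble a $v$-absorber by gluing together the two halves of such a configuration: the quintuple $(a,b,c,d,z)$ will come from Lemma~\ref{lem:abcd}, while the quadruple $(x,y,y',x')$ will come from the absorbability of $z$ as guaranteed by Definition~\ref{dfn:absorbable}. Fix $v \in V$. Lemma~\ref{lem:abcd} supplies at least $\alpha^4 n^5$ quintuples $(a,b,c,d,z)$ satisfying clauses~\ref{it:abcd1}\,--\,\ref{it:abcd3}, so in particular $z$ is absorbable. For each such~$z$, Definition~\ref{dfn:absorbable} produces at least $n^4/2^{21}$ quadruples $(x,y,y',x') \in V^4$ that may be appended. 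The resulting $9$-tuple $(a,b,c,d,z,x,y,y',x')$ automatically satisfies the incidence conditions~\ref{it:vabs1} and the connectability conditions~\ref{it:vabs2} of Definition~\ref{dfn-vabs}, and also satisfies $vab, vbc, vcd \in E$ as needed. This yields a candidate pool of size at least $\alpha^4 n^9 / 2^{21}$.

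From this pool I would then discard those candidates that fail to be genuine $v$-absorbers in $(V\setminus\cR)^9$. Tuples with a repeated coordinate, or with some coordinate equal to $v$, together contribute at most $O(n^8)$. Moreover, by Proposition~\ref{prop:reservoir} the number of candidates with some coordinate in $\cR$ is at most $9|\cR|n^8 \le 9\theta_*^2 n^9$. Hence the number of $v$-absorbers in $(V\setminus\cR)^9$ is at least
\[
    \frac{\alpha^4}{2^{21}}\, n^9 \;-\; 9\theta_*^2 n^9 \;-\; O(n^8),
\]
which exceeds $\alpha^5 n^9$ once $n$ is sufficiently large, by the hierarchy~\eqref{eq:consthierall} (which in particular forces $\alpha \ll 1$ and $\theta_*^2 \ll \alpha^4$).

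The entire argument is essentially bookkeeping on top of Lemmas~\ref{lem:abcd} and~\ref{lem:number-abs}, so I do not foresee any substantial obstacle. The only mildly delicate step is confirming that the reservoir is indeed small enough not to spoil the candidate count, which is exactly what the choice $\theta_* \ll \alpha$ in~\eqref{eq:consthierall} was arranged to guarantee.
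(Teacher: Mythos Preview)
Your proposal is correct and follows essentially the same argument as the paper: both combine Lemma~\ref{lem:abcd} with Definition~\ref{dfn:absorbable} to get at least $\tfrac{\alpha^4}{2^{21}}n^9$ candidate $9$-tuples, then subtract $O(n^8)$ for coincident coordinates and $9\theta_*^2 n^9$ for tuples touching~$\cR$, arriving at the bound $\alpha^5 n^9$ via the hierarchy~\eqref{eq:consthierall}.
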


\begin{proof}
Combining Lemma~\ref{lem:abcd} with Definition \ref{dfn:absorbable},
we learn that there are at least $\tfrac{\alpha^4}{2^{21}}n^9$
$9$-tuples meeting all  requirements from that definition
except that some of the $10$ vertices $v, a, \ldots, x'$
might coincide. However, there can be at most~$45n^8$ such bad $9$-tuples.
Moreover, at most $9\theta_*^2n^9$ members of $V^9$ can use a vertex from
the reservoir and, consequently, the number of desired $v$-absorbers is at least
$\bigl(\tfrac{\alpha^4}{2^{21}}-\tfrac{45}{n}-9\theta_*^2\bigr)n\ge\alpha^5 n$.
\end{proof}

Having established that there are at least $\Omega(n^9)$ $v$-absorbers
with connectable pairs for every $v\in V$ we can build the absorbing path
by a  standard probabilistic argument. First we find a suitable selection
of $\Omega(n)$ disjoint $9$-tuples that contain many $v$-absorbers for every~$v$, which
is rendered by the following lemma. In a second step we utilise the
$\zeta_*$-connectable pairs and connect these $9$-tuples to the absorbing path avoiding the
reservoir set~$\cR$.

\begin{lemma}\label{lem:choose-F}
Given Setup~\ref{setup2}, there is a set $\cF\subseteq (V\setminus \cR)^9$ with the following properties:
\begin{enumerate}[label=\rmlabel]
\item\label{it:cF1} $|\cF|\le 8\alpha^{-5}\theta_*^{2}n$,
\item\label{it:cF2} all vertices of every $9$-tuple in $\cF$ are distinct and the 9-tuples in $\cF$ are pairwise disjoint,
\item\label{it:cF3} if $(a, b, c, d, z, x, y, y', x')\in \cF$, then
					$abz, bzc, zcd, xyy', yy'x'\in E$ and the pairs $ab, cd, xy, x'y'$
					are $\zeta_*$-connectable,
\item\label{it:cF4} and for every $v\in V$ there are at least $2\theta_*^2 n$ many
					$v$-absorbers in $\cF$.
\end{enumerate}
\end{lemma}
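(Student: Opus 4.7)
The plan is to construct $\cF$ by a standard probabilistic selection followed by a cleanup step. Let $\cA\subseteq(V\setminus\cR)^9$ denote the family of all $9$-tuples $(a,b,c,d,z,x,y,y',x')$ with pairwise distinct entries such that $abz,bzc,zcd,xyy',yy'x'\in E$ and the pairs $ab,cd,xy,x'y'$ are $\zeta_*$-connectable; thus every member of $\cA$ already fulfils~\ref{it:cF3}. For each $v\in V$ let $\cA_v\subseteq\cA$ denote the subset of $v$-absorbers, so that $|\cA_v|\ge\alpha^5 n^9$ by Lemma~\ref{lem:count-absorbers}.

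First I would form a random family $\cF'\subseteq\cA$ by including each tuple independently with probability
\[
p=\frac{4\theta_*^2}{\alpha^5 n^8}\,.
\]
Then $\EE[|\cF'|]\le pn^9=4\alpha^{-5}\theta_*^2 n$ and, uniformly in $v\in V$, $\EE[|\cF'\cap\cA_v|]\ge p\alpha^5 n^9=4\theta_*^2 n$. Two distinct $9$-tuples in $\cA$ can share a coordinate in at most $81 n^{17}$ ways, so the expected number $Y$ of ordered pairs in $\cF'$ that share at least one vertex is at most $81p^2n^{17}=O(\alpha^{-10}\theta_*^4 n)$, which is $o(\theta_*^2 n)$ by the hierarchy~\eqref{eq:consthierall}.

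Chernoff's inequality applied to the binomial random variables $|\cF'|$ and $|\cF'\cap\cA_v|$, together with Markov's inequality for $Y$ and a union bound over the $n$ vertices $v$, shows that with positive probability $\cF'$ satisfies simultaneously $|\cF'|\le 8\alpha^{-5}\theta_*^2 n$, $|\cF'\cap\cA_v|\ge 3\theta_*^2 n$ for every $v\in V$, and $Y\le\theta_*^2 n$. Fixing such an outcome and deleting one tuple from each colliding pair yields a pairwise vertex-disjoint family $\cF\subseteq\cF'$; at most $\theta_*^2 n$ tuples are discarded in this cleanup, which leaves a set of size at most $8\alpha^{-5}\theta_*^2 n$ containing at least $2\theta_*^2 n$ $v$-absorbers for every $v$. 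Properties~\ref{it:cF1}\,--\,\ref{it:cF4} then hold immediately.

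The one delicate point is the cleanup step: it must not destroy condition~\ref{it:cF4}. This reduces to the estimate $\EE[Y]\ll\theta_*^2 n$, which is exactly the place where the slack $\theta_*\ll\alpha$ in~\eqref{eq:consthierall} is invoked. The minimum-degree input to the whole argument is only used indirectly through Lemma~\ref{lem:count-absorbers}, so the remainder of the proof is entirely a routine first- and second-moment calculation.
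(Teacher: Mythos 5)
Your proposal is correct and follows essentially the same route as the paper: an independent random selection of $9$-tuples with inclusion probability $\Theta(\theta_*^2\alpha^{-5}n^{-8})$, first/second-moment bounds plus Chernoff and a union bound over $v$, and a cleanup deleting at most $\theta_*^2 n$ tuples involved in collisions, with the slack $\theta_*\ll\alpha^5$ from the hierarchy guaranteeing the collision count is negligible. The only cosmetic differences are that you restrict the ground set to tuples already satisfying~\ref{it:cF3} (the paper instead deletes violators afterwards) and use Chernoff where Markov suffices for $|\cF'|$.
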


\begin{proof}
Set
\begin{equation*}\label{eq:gamma}
	\gamma=\frac{4\theta_*^2}{\alpha^5}
\end{equation*}
and consider a random selection $\cX\subseteq (V\setminus \cR)^9$ containing
each such $9$-tuple independently with probability $p=\gamma n^{-8}$.
Since $\EE[|\cX|]\le p n^9=\gamma n$, Markov's inequality yields
\begin{equation}\label{eq:X-big}
	\PP\big(|\cX|>2\gamma n\big)\le \frac12\,.
\end{equation}

Let us call two distinct $9$-tuples from $V^9$ {\it overlapping} if there is a vertex
occurring in both. Evidently, there are at most $81n^{17}$ ordered pairs of overlapping
$9$-tuples. Hence the random variable~$P$ giving the number of such pairs
both of whose components are in $\cX$ satisfies
\[
	\EE[P]\le 81n^{17}p^2=81\gamma^2n\,.
\]
By $\alpha\gg \theta_*$ we have $18 \gamma\le \theta_*$
and thus a further application of Markov's inequality discloses
\begin{equation}\label{eq:P-big}
	\PP\big(P>\theta_*^2n\big)\le \PP\big(P>324\gamma^2 n\big)\le \frac14\,.
\end{equation}

In view of Lemma~\ref{lem:count-absorbers} for each vertex $v\in V$
the set $\cA_v$ containing all $v$-absorbers from~$(V\setminus \cR)^9$ has the
property $\EE[|\cA_v\cap \cX|]\ge \alpha^5n^9p=\alpha^5\gamma n=4\theta_*^2n$.
Since $|\cA_v\cap \cX|$ is binomially distributed, Chernoff's inequality gives
for every $v\in V$
\begin{equation}\label{eq:Av-small}
	\PP\big(|\cA_v\cap \cX|\le 3\theta_*^2n\big)\le \exp\big(-\Omega(n)\big)<\frac 1{5n}
	\,.
\end{equation}

Owing to~\eqref{eq:X-big},~\eqref{eq:P-big}, and~\eqref{eq:Av-small} there is an
``instance'' $\cF_*$ of $\cX$ satisfying the following:
\begin{enumerate}
\item[$\bullet$] $|\cF_*|\le 2\gamma n$,
\item[$\bullet$] $\cF_*$ contains at most $\theta_*^2 n$ overlapping pairs,
\item[$\bullet$] and for every $v\in V$ the number of $v$-absorbers in $\cF_*$ is
				at least $3\theta_*^2n$.
\end{enumerate}

To obtain the desired set $\cF$ we delete from $\cF_*$
all $9$-tuples containing some vertex more than once,
all $9$-tuples belonging to an overlapping pair,
and all $9$-tuples violating~\ref{it:cF3}.
Then~\ref{it:cF1} is immediate from $|\cF|\le |\cF^*|$,~\ref{it:cF2} and~\ref{it:cF3}
hold by construction, and for~\ref{it:cF4} we recall that $v$-absorbers satisfy~\ref{it:cF3}
by definition.
\end{proof}

Finally, we are ready to build an absorbing path and thus  establish Proposition~\ref{prop:absorbingP}.

\begin{proof}[Proof of Proposition~\ref{prop:absorbingP}]
Let $\cF\subseteq (V\setminus \cR)^9$ be as obtained in Lemma~\ref{lem:choose-F}.
By condition~\ref{it:cF3} from this lemma for every $(a, b, c, d, z, x, y, y', x')\in \cF$
we may consider the tight paths $abzcd$ and $xyy'x'$. By~\ref{it:cF2} these paths
are mutually vertex-disjoint and by~\ref{it:cF1} the set~$\cG$ of all these paths
satisfies $|\cG|=2|\cF|\le 16\alpha^{-5}\theta_*^2n$.

Using the connecting lemma we will now prove that there is a path
$P_A\subseteq H-\cR$
\begin{enumerate}[label=\alabel]
\item\label{it:PAa} containing all members of $\cG$ as subpaths,
\item\label{it:PAb} whose end-pairs are $\zeta_*$-connectable,
\item\label{it:PAc} and whose length is at most $(3\ell+6)|\cG|$.
\end{enumerate}

Essentially, the reason why such a path exists is that starting with any member of $\cG$
we can construct $P_A$ by~$|\cG|-1$ successive applications of the connecting lemma
attaching one further path from~$|\cG|$ in each step. When carrying this plan out,
we need to avoid entering the reservoir and we need to be careful not to use the same
vertex multiple times.

To show that this is possible we consider a maximal subset $\cG^*\subseteq \cG$
such that some path $P^*_A\subseteq H-\cR$ has the
properties~\ref{it:PAa},~\ref{it:PAb}, and~\ref{it:PAc} enumerated above
with $\cG$ replaced by~$\cG^*$. As the end-pairs of members of $\cG$ are by definition
$\zeta_*$-connectable we have $P^*_A\ne\varnothing$.
From~\ref{it:PAc} and $1\gg \alpha, \ell^{-1}\gg \theta_*$ we infer
\begin{equation}\label{eq:PA-short1}
	|V(P^*_A)|\le 2+(3\ell+6)|\cG^*|\le 4\ell |\cG|
	\le 64\ell\alpha^{-5}\theta_*^2n\le \theta_*^{3/2}n
\end{equation}
and thus our upper bound on the size of the reservoir leads to
\begin{equation}\label{eq:PA-short2}
	|V(P^*_A)|+|\cR| \le 2\theta_*^{3/2}n \le \frac{\theta_*n}{2(3\ell+1)}\,.
\end{equation}
Assume for the sake of contradiction that $\cG^*\ne \cG$. Let $(z, w)$ be the ending pair
of $P^*_A$ and let~$P$ be an arbitrary path from $\cG\setminus \cG^*$ with starting pair $(x, y)$.
Since both~$(z,w)$ and~$(x,y)$ are $\zeta_*$-connectable,
Proposition~\ref{lem:con} tells us that there are at least $\theta_* n^{3\ell+1}$
tight $(z, w)$-$(x, y)$-paths of length $3(\ell+1)$. By~\eqref{eq:PA-short2} at least
half of these are internally disjoint from $V(P^*_A)\cup\cR$. In particular, there is at least one
such connection giving rise to a path $P^{**}_A\subseteq H-\cR$
starting with $P^*_A$, ending with $P$ and satisfying
\[
	|V(P^{**}_A)|=|V(P^{*}_A)|+(3\ell+1)+|V(P)|\le |V(P^{*}_A)|+(3\ell+6)
	\le 2+(3\ell+6)(|\cG^*|+1)\,.
\]
So $P^{**}_A$ exemplifies that $\cG^*\cup\{P\}$ contradicts the maximality of $\cG^*$
and this contradiction proves that we have indeed $\cG^*=\cG$, i.e., that a path
$P_A$ with the properties~\ref{it:PAa},~\ref{it:PAb}, and~\ref{it:PAc} promised above
does really exist.

As proved in~\eqref{eq:PA-short1} this path satisfies in particular the above
condition~\ref{it:PA1} of Proposition~\ref{prop:absorbingP}. Moreover,~\ref{it:PA2} is the same as~\ref{it:PAb}.
To finally establish~\ref{it:PA3} of Proposition~\ref{prop:absorbingP} one absorbs the up to at most $2\theta_*^2n$
vertices from $X$ one by one
into~$P_A$. By the discussion after Definition~\ref{dfn-vabs}
this is possible due to~\ref{it:PAa} combined with clause~\ref{it:cF4}
from Lemma~\ref{lem:choose-F}.
\end{proof}

\section{Almost spanning cycle}\label{sec:longpath}
This section is dedicated to the proof of
Proposition~\ref{prop:longpath}.
Most of the work we need to perform concerns
the construction of a long path $Q$ in the induced subhypergraph $\wh{H}=H-V(P_A)$
that covers ``almost all'' vertices, but leaves the reservoir set~$\cR$
``almost intact.'' Besides, the end-pairs of this path should be sufficiently connectable
so that it can easily be included into $C$.
These properties of $Q$ are made precise by the following statement.

\begin{lemma}\label{lem:long} Given Setup~\ref{setup2} as well as an absorbing path $P_A$
as provided by Proposition~\ref{prop:absorbingP},
there is a path $Q\subseteq \wh{H}=H-V(P_A)$ such that
\begin{enumerate}[label=\rmlabel]
\item\label{it:lp1} $|V(\wh{H})\setminus \bigl(\cR\cup V(Q)\bigr)|\le \theta_*^2 n$,
\item\label{it:lp2} $|V(Q)\cap \cR|\le \theta_{**}^2n$,
\item\label{it:lp3} and the end-pairs of~$Q$ are $\zeta_{**}$-connectable.
\end{enumerate}
\end{lemma}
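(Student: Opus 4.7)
The plan is to construct $Q$ in two stages. In the first stage I would exhibit a collection of vertex-disjoint tight paths in $\wh H - \cR$ whose end-pairs are all $\zeta_{**}$-connectable and which together cover all but at most $\theta_*^2 n$ vertices of $V(\wh H) \setminus \cR$. In the second stage I would glue them into a single tight path $Q$ by repeated application of Lemma~\ref{lem:use-reservoir}, using fresh reservoir vertices for each connection. For the reservoir budget~\ref{it:lp2} to hold, it is essential that the number $s$ of paths in the collection satisfies $s \le \theta_{**}^3 n$, so that the total number of reservoir vertices consumed by the $s-1$ connections stays well below the allowance $\theta_{**}^2 n$.

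The first stage is the main work. I would prove that there exist vertex-disjoint tight paths $P_1, \dots, P_s$ in $\wh H - \cR$ with $s \le \theta_{**}^3 n$, covering all but at most $\theta_*^2 n$ vertices of $V(\wh H) \setminus \cR$, whose starting and ending pairs are $\zeta_{**}$-connectable. The natural approach is iterative: at each stage start a new tight path at a $\zeta_{**}$-connectable pair in the uncovered region and extend it greedily one vertex at a time, using the density of the link graphs together with Proposition~\ref{prop:robust} to produce extension candidates. Fact~\ref{fact:easy} ensures that within each link graph the non-$\zeta_{**}$-connectable pairs are a small minority, so most extensions preserve connectability of the new end-pair; Proposition~\ref{prop:intersect} guarantees that the robust subgraphs of consecutive link graphs share $\Omega(n^2)$ edges, providing enough overlap to keep extending. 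When no such extension is available, close the current path and begin a new one; a standard averaging argument bounds the number of ``closures'' by $\theta_{**}^3 n$. An alternative route, more in the spirit of the absorption method, is to apply a weak hypergraph regularity decomposition to $\wh H - \cR$, find an almost spanning path structure in the reduced picture where the cluster link-graphs inherit the density hypothesis, and lift it back to $\wh H - \cR$ via Proposition~\ref{lem:con}.

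For the concatenation stage, process $P_1, \dots, P_s$ in order and maintain after step $i$ a tight path $Q_i$ whose starting pair is that of $P_1$ and whose ending pair is that of $P_i$. At step $i+1$, let $\cR'$ be the reservoir vertices already used; then
\[
	|\cR'| \le (i-1)(3\ell+1) \le s(3\ell+1) \le 2\theta_{**}^2 n,
\]
so Lemma~\ref{lem:use-reservoir} produces a tight path of length $3(\ell+1)$ through $\cR \setminus \cR'$ joining the ending pair of $Q_i$ to the starting pair of $P_{i+1}$. Concatenation yields $Q_{i+1}$, and we set $Q = Q_s$. Property~\ref{it:lp1} holds because $V(Q)$ misses only the vertices left uncovered in the first stage; property~\ref{it:lp2} follows from $|V(Q) \cap \cR| \le (s-1)(3\ell+1) \le \theta_{**}^2 n$; and property~\ref{it:lp3} holds because the ends of $Q$ are the starting pair of $P_1$ and the ending pair of $P_s$, both $\zeta_{**}$-connectable by construction.

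The main obstacle is the path cover of the first stage. The minimum vertex-degree hypothesis alone does not preclude ``stuck'' configurations, as witnessed by Example~\ref{exmpl1}, so global structural input from Proposition~\ref{prop:robust} and Proposition~\ref{prop:intersect} is needed to ensure that extensions whose new end-pair remains $\zeta_{**}$-connectable exist throughout the process, not only in a generic initial step. The technically heaviest point is to carry a dynamic certificate that the current end-pair is supported on many robust subgraphs and to show that this certificate survives under extensions that avoid both the already-used vertices and the reservoir; once this is established, the remainder of the argument reduces to the bookkeeping outlined above.
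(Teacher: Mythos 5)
Your second stage (gluing via Lemma~\ref{lem:use-reservoir}) is fine and matches what the paper does, but the first stage --- the almost-spanning collection of vertex-disjoint tight paths with $\zeta_{**}$-connectable ends --- is exactly where all the difficulty of Lemma~\ref{lem:long} lives, and your proposal does not actually prove it. The greedy extension sketch rests on the claim that ``a standard averaging argument bounds the number of closures by $\theta_{**}^3 n$,'' but no such standard argument exists here: under a vertex-degree (rather than pair-degree) hypothesis, the current end-pair of a greedily grown tight path can have tiny or zero pair degree into the uncovered set, and nothing in Propositions~\ref{prop:robust}, \ref{prop:intersect} or Fact~\ref{fact:easy} gives you a per-step guarantee that an extension preserving $\zeta_{**}$-connectability exists. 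You correctly identify this as ``the main obstacle'' and ``the technically heaviest point,'' but identifying the gap is not the same as closing it; as written, the proof is incomplete at its central step.

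The paper's resolution is structurally different from anything you sketch. It works with a \emph{maximal} pair $(\ccC,Q)$, where $\ccC$ is a family of $M$-vertex ``pieces'' with $\zeta_{**}$-connectable ends already threaded onto $Q$, and derives a contradiction if the uncovered set $U$ is large. The mechanism is not path extension at all: one partitions the non-reservoir vertices into blocks of size $M$, shows by an averaging/convexity computation (Claim~\ref{claim:cS'}) that some ``society'' of $m\approx 36/\alpha$ blocks is \emph{useful} for $\Omega(|U|)$ uncovered vertices $u$ (meaning $\overline{R}_u$ restricted to the society is dense enough), and then invokes the Faudree--Schelp extension of the Erd\H{o}s--Gallai theorem (Theorem~\ref{thm:FS}) to extract from each such $\overline{R}_u$ a long \emph{graph} path inside the society. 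Pigeonholing over the at most $(Mm)!$ possible paths yields a single graph path $W$ common to many $\overline{R}_u$, which is converted into a tight hypergraph path $T$ by interleaving vertices of $U''$ at every third position; splitting $T$ into $m+6$ new pieces and discarding the at most $m$ old pieces inside the society strictly enlarges $\ccC$, contradicting maximality. This exchange argument is the missing idea: it produces new connectable-ended pieces \emph{en masse} from the uncovered vertices' robust subgraphs, rather than trying to certify connectability one greedy step at a time. Your alternative suggestion of a weak regularity decomposition is also not what the paper does and would require its own substantial development to recover the connectability of end-pairs after lifting.
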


Before we prove Lemma~\ref{lem:long}, we deduce Proposition~\ref{prop:longpath} from the lemma.

\begin{proof}[Proof of Proposition~\ref{prop:longpath}]
Given the path $Q\subseteq H-V(P_A)$ by Lemma~\ref{lem:long},
one simply connects the end-pairs of~$P_A$ with the end-pairs of $Q$
through ``free vertices'' from the reservoir using Lemma~\ref{lem:use-reservoir}.
The connectability assumption of that lemma is satisfied by condition~\ref{it:PA2}
from Proposition~\ref{prop:absorbingP} and by condition~\ref{it:lp3} from
Lemma~\ref{lem:long}. Each of these connections uses exactly $3\l+1$ vertices of $\cR$.
Consequently, it follows from Lemma~\ref{lem:long}\,\ref{it:lp2} that
at most $\theta_{**}^2n+(3\ell+1)<2\theta_{**}^2n$
vertices from $\cR$ need to be avoided and Lemma~\ref{lem:use-reservoir} applies.
The resulting tight cycle~$C$ contains
all but at most $\theta_*^2 n$ vertices from $V\setminus\cR$ (see Lemma~\ref{lem:long}\,\ref{it:lp1}).
Furthermore, since $|\cR|\leq \theta_*^2 n$ (see Setup~\ref{setup2} and Proposition~\ref{prop:reservoir})
it follows that $C$ covers all but at most $2\theta_*^2 n$ vertices as required by
Proposition~\ref{prop:longpath}.
\end{proof}

It remains to establish Lemma~\ref{lem:long}. This proof will occupy the
remainder of this section and, as explained in Section~\ref{sec:plan},
it completes the proof of our main result.
In the proof we make use of the following extension of the Erd\H os--Gallai theorem~\cite{ErGa59} concerning
the extremal problem for \emph{long} paths. We state the result of Faudree and Schelp~\cite{FS75}*{page~151}
in a form tailored for our application.
\begin{theorem}[Faudree and Schelp]
\label{thm:FS}
If $G=(V,E)$ is a graph not containing a path of length $\lambda|V|$ for $\lambda>1/2$, then
$|E|\leq \big(\lambda^2+(1-\lambda)^2\big)|V|^2/2$.	
\qed
\end{theorem}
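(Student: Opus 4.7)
The extremal configuration is the disjoint union $K_a \dcup K_b$ with $a=\lfloor\lambda n\rfloor$ and $b=n-a\le a$ (using $\lambda>1/2$). In this graph the longest path has length $a-1<\lambda n$ and the number of edges equals $\binom{a}{2}+\binom{b}{2}=(\lambda^2+(1-\lambda)^2)n^2/2-O(n)$, so the claimed bound is tight up to lower-order terms. The plan is to show that this is essentially optimal by analysing the component structure of $G$.

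Let the connected components of $G$ be $C_1,\dots,C_t$ with $|V(C_i)|=n_i$ and $n_1\ge n_2\ge\cdots\ge n_t$. The trivial clique bound gives $e(G)\le\sum_i\binom{n_i}{2}\le\frac12\sum_i n_i^2$. If every $n_i\le\lambda n$, the conclusion follows purely from a convexity/rearrangement step: among nonnegative reals bounded by $\lambda n$ and summing to $n$, the quantity $\sum n_i^2$ is maximised at the two-part extreme point $(\lambda n,(1-\lambda)n)$---which is an admissible partition precisely because $\lambda>1/2$ forces $(1-\lambda)n\le\lambda n$---yielding $\sum n_i^2\le(\lambda^2+(1-\lambda)^2)n^2$ and hence the desired edge bound.

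The heart of the matter is the complementary case where some component, say $C_1$, satisfies $n_1>\lambda n$. Then $C_1$ is connected but admits no Hamilton path, since every path in it has at most $\lambda n<n_1$ vertices. Here the trivial clique bound $e(C_1)\le\binom{n_1}{2}$ is far too weak; one needs a Kopylov-type sharpening of the Erd\H{o}s--Gallai theorem for connected graphs, whose extremal construction is essentially a clique $K_{\lambda n-1}$ with $n_1-\lambda n+1$ pendant vertices attached at a single apex. I would obtain the required bound by applying the P\'osa rotation-extension technique to a longest path $P$ in $C_1$: maximality of $P$ forces both endpoints to have all neighbours on $P$, and if $v_1$ is adjacent to $v_{i+1}$ while $v_p$ is adjacent to $v_i$, one obtains a spanning cycle on $V(P)$, which combined with any edge from $V(C_1)\setminus V(P)$ to $V(P)$ would produce a longer path. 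Iterating the rotation to generate many endpoints pins down the structure and yields $e(C_1)\le\binom{\lambda n}{2}+O(n_1-\lambda n)$.

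Combining this with $\sum_{i\ge2}e(C_i)\le\binom{n-n_1}{2}$ (using that $\binom{a}{2}+\binom{b}{2}\le\binom{a+b}{2}$) and the inequality $n-n_1<(1-\lambda)n$ then delivers the target bound. The main obstacle is purely in this large-component case: a direct application of the classical Erd\H{o}s--Gallai theorem gives only $e(G)\le(\lambda n-1)n/2$, which misses the correct constant $\lambda^2+(1-\lambda)^2$ in the relevant regime. Executing the P\'osa argument quantitatively with enough precision to absorb the ``excess'' $n_1-\lambda n$ pendant-like vertices into the additive error, so that it telescopes cleanly against the $\binom{n-n_1}{2}$ term, is the delicate step that makes the proof work.
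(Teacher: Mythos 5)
The paper offers no proof of this statement --- it is quoted from Faudree and Schelp with a reference --- so your argument has to stand on its own. The component decomposition and the convexity step in the case where every component has at most $\lambda |V|$ vertices are correct. The gap is in the large-component case: the bound you claim there, $e(C_1)\le\binom{\lambda n}{2}+O(n_1-\lambda n)$ for a connected component $C_1$ on $n_1>\lambda n$ vertices with no path of length $\lambda n$, is false. The extremal problem for connected graphs without long paths has a \emph{second} extremal family besides the clique-with-pendants: take $K_s$ completely joined to an independent set of size $n_1-s$, with $s=\lfloor(\lambda n-1)/2\rfloor$. Since no two consecutive vertices of a path can lie in the independent set, every path has at most $2s+1$ vertices, hence length at most $2s\le \lambda n-1$; yet for $\lambda$ close to $1/2$ and $n_1=n$ this graph has about $\binom{n/4}{2}+\tfrac{n}{4}\cdot\tfrac{3n}{4}\approx\tfrac{7}{32}n^2$ edges, whereas $\binom{\lambda n}{2}\approx\tfrac{1}{8}n^2$. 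The excess is quadratic in $n$, not an additive $O(n_1-\lambda n)$, so your key lemma fails outright (the theorem itself survives because $\tfrac{7}{32}n^2$ is still below the target $\approx\tfrac14 n^2$, but your route to it does not). Relatedly, the P\'osa rotation--extension sketch cannot ``pin down'' the clique-plus-pendants structure, because the join construction is itself rotation-stable: every longest path already has both endpoints with all neighbours on the path, and rotations merely permute independent-set endpoints.

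To repair the argument you would need the correct refinement of Erd\H{o}s--Gallai for connected graphs (Kopylov; Balister--Gy\H{o}ri--Lehel--Schelp), which bounds $e(C_1)$ by the \emph{maximum} of two functions --- one for the clique-with-pendants family and one for the join family --- and then a separate (easy but necessary) computation showing that the join term is also at most $\bigl(\lambda^2+(1-\lambda)^2\bigr)n^2/2$ whenever $\lambda>1/2$, before combining with $\binom{n-n_1}{2}$ for the remaining components. Alternatively, one can follow Faudree and Schelp's original inductive proof, which establishes the clean bound $t\binom{k}{2}+\binom{r}{2}$ for $n=kt+r$ directly and explicitly accounts for the join-type extremal graphs.
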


\begin{proof}[Proof of Lemma~\ref{lem:long}]
We fix an integer $M$ satisfying the conditions
\begin{equation}\label{eq:M}
	\theta_{**}\gg \frac{1}{M}\gg \frac1n
	\qand
	M\equiv 2\pmod{3}\,.
\end{equation}

\begin{figure}[ht]
\centering
\begin{subfigure}[b]{0.50\textwidth}
\begin{tikzpicture}[scale=1.1]
	
	\def\r{0.7cm}
  	\def\v{1.4mm}
	\def\n{9}
	\def\con{
		($(72:\r)+(217:{\r+cos(217*\n)*\v})$)
			\foreach \a in {217,...,-97}{ -- ($(72:\r)+(\a:{\r+cos(\a*\n)*\v})$)}
	}
	
	\begin{pgfonlayer}{front}
	\draw[red!70!black, line width=2pt, rounded corners, line cap=round] (0,0) -- (2,0) -- (2,1);
	
	\begin{scope}[shift={(2.36,0.84)}]
	\draw[red!70!black, line width=2pt, line cap=round, rotate=30]
		\con;
	\end{scope}
	
	\draw[red!70!black, line width=2pt, rounded corners, line cap=round]
		(2.437,1) -- ++(0,-1) -- ++(2,0) -- ++(0,1);
	
	\begin{scope}[shift={(4.797,0.84)}]
	\draw[red!70!black, line width=2pt, line cap=round, rotate=30]
		\con;
	\end{scope}
	
	\draw[red!70!black, line width=2pt, rounded corners, line cap=round]
		(4.874,1) -- ++(0,-1) -- ++(2,0);
	\end{pgfonlayer}
	
	\draw[orange!70!yellow, line width=1.5pt, rounded corners] (1.1,0.6) rectangle (5.774,2.6);
	\fill[orange!70!yellow,opacity=0.2,line width=1.5pt, rounded corners] (1.1,0.6) rectangle (5.774,2.6);
	
	\node at (0.75,2.45) {\large \textcolor{orange!90!yellow}{$\cR$}};
	\node at (0.125,0.28) {\large \textcolor{red!70!black}{$Q$}};
	\node at (3.5,-1.2) {segments of $Q$};
	
	\draw[black!50!white, line width=1.5pt, line cap=round, ->] (2.15,-1.2) -- (1.0,-0.2);
	\draw[black!50!white, line width=1.5pt, line cap=round, ->] (3.5,-0.95) -- (3.5,-0.2);
	\draw[black!50!white, line width=1.5pt, line cap=round, ->] (4.82,-1.2) -- (5.9,-0.2);
	
	\end{tikzpicture}
	\caption{Segments of \textcolor{red!70!black}{$Q$} connected through \textcolor{orange!90!yellow}{$\cR$}}
	\label{sfig:Q}
\end{subfigure}\hfill
\begin{subfigure}[b]{0.47\textwidth}
\begin{tikzpicture}[scale=1.1]
	
	\draw[opacity=0] (0.2,0) -- (1,0);

	\def\piece{
		\coordinate (x1) at (0,0);
		\coordinate (x2) at (0.5,0);
		\coordinate (x3) at (2.0,0);
		\coordinate (x4) at (2.5,0);
		
		\draw[green!80!black, line width=2pt] (x1) -- (x2);
		\draw[line width=2pt, decorate, decoration={snake,segment length=7,amplitude=4, pre length=2pt, post length=2pt}, red!70!black] (x2) -- (x3);
		\draw[green!80!black, line width=2pt] (x3) -- (x4);	
		
		\begin{pgfonlayer}{front}
			\foreach \i in {1,...,4}
				\fill (x\i) circle (2pt);
		\end{pgfonlayer}
	}
	
	\begin{scope}[shift={(0.8,0)}]
		\piece;
	\end{scope}
	
	\begin{scope}[shift={(4.1,0)}]
		\piece;
	\end{scope}

	\coordinate (y1) at (3.7,0.5);
		
	\begin{pgfonlayer}{front}
		\fill (y1) circle (2pt);
	\end{pgfonlayer}

	\qedge{(y1)}{(3.3,0)}{(2.8,0)}{4pt}{1.5pt}{red!70!black}{red!70!black,opacity=0.2};
	\qedge{(y1)}{(4.1,0)}{(3.3,0)}{4pt}{1.5pt}{red!70!black}{red!70!black,opacity=0.2};
	\qedge{(y1)}{(4.6,0)}{(4.1,0)}{4pt}{1.5pt}{red!70!black}{red!70!black,opacity=0.2};

	\node at (3.7,-1.2) {pieces of a segment};
	\draw[black!50!white, line width=1.5pt, line cap=round, ->] (2.8,-0.95) -- (1.95,-0.3);
	\draw[black!50!white, line width=1.5pt, line cap=round, ->] (4.6,-0.95) -- (5.35,-0.3);

		\end{tikzpicture}
	\caption{Pieces with \textcolor{green!60!black}{$\zeta_{**}$-connectable ends}}
	\label{sfig:fragments}
    \end{subfigure}
\caption{Segments and pieces of the tight path $Q$.}
\label{fig:Qfragments}
\end{figure}
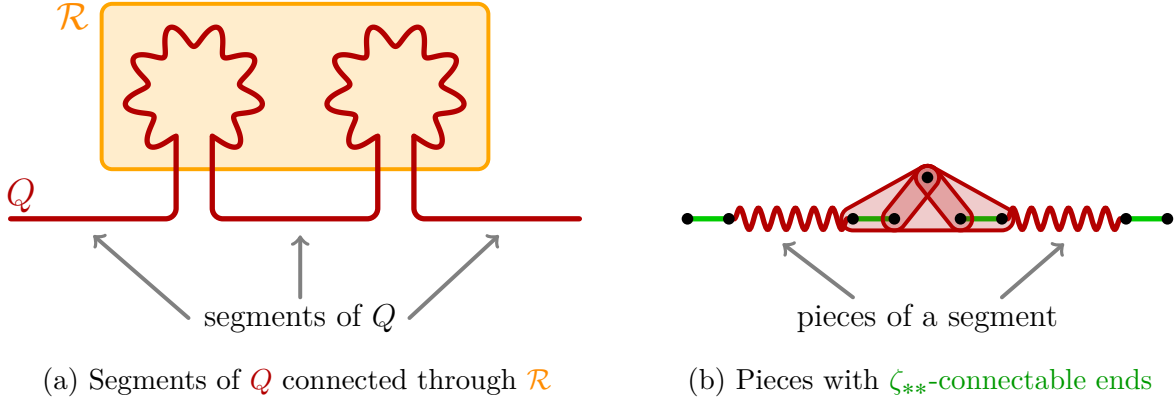

The desired path $Q$ will consist of many ``segments'' from~$\wh{H}- \cR$ that are connected
with each other through the reservoir~$\cR$ (see Figure~\ref{sfig:Q}).
For technical reasons it will be helpful to assume
that every segment $F$ satisfies
$$|V(F)|\equiv -1\pmod{M+1}$$ and that it has $\zeta_{**}$-connectable
end-pairs. The former property of these segments allows us to think of them as being composed
of several ``pieces'' consisting of $M$ vertices each, such
that any two consecutive pieces are connected with
each other through one further vertex (see Figure~\ref{sfig:fragments}).
These pieces will be taken from the set
\[
	\ccP=\bigl\{P\subseteq \wh{H}- \cR\colon P \text{ is an $M$-vertex tight path whose
	end-pairs are $\zeta_{**}$-connectable}\bigr\}.
\]
Roughly speaking, the strategy of the proof below is to show that a path $Q$ of the
kind just described will satisfy the conclusion of Lemma~\ref{lem:long}
as soon as it is ``maximal'' in the sense we will make precise next. To formulate
this maximality condition, it will be convenient to talk not only about the
path $Q$ itself but also about the set $\ccC\subseteq \ccP$ of pieces used in its
construction. We collect all properties that we require from the pair $(\ccC, Q)$
into the definition that follows.

\subsection*{Candidates}
A pair $(\ccC, Q)$ consisting of a subset $\ccC\subseteq \ccP$ whose members are mutually
vertex-disjoint and a tight path $Q\subseteq \wh{H}$ is said to be a {\it candidate} if
\begin{enumerate}[label=\alabel]
\item\label{it:C1} every $P\in \ccC$ is a subpath of $Q$,
\item\label{it:C2} if $P', P''\in \ccC$ with $P'\ne P''$ lie on $Q$ in such a way
	that no $P\in\ccC$ lies between them, then between~$P'$ and $P''$ there is
	\begin{enumerate}[label=\rmlabel]
	\item\label{it:C2i} either a single vertex
	\item\label{it:C2ii} or there are only vertices from $\cR$,
	\end{enumerate}
\item\label{it:C3} provided $\ccC\ne \emptyset$, the path $Q$ starts and ends
	with a path from $\ccC$,
\item\label{it:C4} and $|V(Q)\cap \cR|\le 19\alpha^{-1}\ell|\ccC|$.
\end{enumerate}

For instance, the pair consisting of the empty set and the empty path is a candidate.
Now let $(\ccC, Q)$ be a candidate with $|\ccC|$ as large as possible. Suppose we know
that the set
\[
	U=V(\wh{H})\setminus \bigl(\cR\cup V(Q)\bigr)=V\bigl(H - V(P_A) - \cR - V(Q)\bigr)
\]
of unused vertices outside the reservoir satisfies
\begin{equation}\label{eq:U-small}
	|U| \le \theta_*^2 n\,.
\end{equation}
We claim that then $Q$ would have all the desired properties.
Indeed by~\eqref{eq:U-small} it would
satisfy~\ref{it:lp1} of Lemma~\ref{lem:long}. Since the members of
$\ccC$ are mutually disjoint, we have $|\ccC|\le \tfrac{n}{M}$ so from~\ref{it:C4} and
$\theta_{**}, \ell^{-1}, \alpha\gg M^{-1}$ we get~\ref{it:lp2}.
Moreover~\ref{it:C3} implies part~\ref{it:lp3} of Lemma~\ref{lem:long}.

Hence, for the rest of the argument we assume that \eqref{eq:U-small} is false
and intend to derive a contradiction by constructing a ``better'' candidate $(\ccC',Q')$
with a larger family $\ccC'$.
Obviously, the path of such a candidate will need to contain some vertices from $U$
and to prepare ourself for a later stage of the argument we will now deal
with the connectability properties of the robust subgraphs of these vertices.
More precisely, for each $u\in U$ we define a subgraph $\overline{R}_u\subseteq R_u$
with the same set of vertices by deleting all edges that are not $\zeta_{**}$-connectable.
Owing to Fact~\ref{fact:easy} we have, in particular,
\[
	\sum_{u\in U} \bigl(e(R_u)-e(\overline{R}_u)\bigr)\le \frac{\zeta_{**}}{2}n^3\,.
\]
Consequently, the set
\[
	\Ubad=\big\{u\in U\colon e(\overline{R}_u)\le e(R_u)-\tfrac 18\alpha n^2\big\}
\]
satisfies $|\Ubad|\le 8\zeta_{**}\alpha^{-1}n$ and, by $\theta_*, \alpha\gg \zeta_{**}$,
this leads to
\begin{equation}\label{eq:N1-small}
	|\Ubad|\le \frac12 \theta_*^2 n\,.
\end{equation}
For each $u\in U\setminus \Ubad$ we introduce the real number
$\eta_u\in\bigl[\tfrac23+\tfrac{\alpha}{2}, 1\bigr]$ by
\begin{equation}\label{eq:eta-v}
	\big|V\big(\overline{R}_u\big)\big|=|V(R_u)|=\eta_u n
\end{equation}
and observe that part~\ref{it:rc2} from Proposition~\ref{prop:robust}  implies
\begin{equation}\label{eq:eta-e}
	e\big(\overline{R}_u\big)\ge
	e\big(R_u\big)-\frac18\alpha n^2\ge
	\left(\frac59+\frac{\alpha}4-(1-\eta_u)^2\right)\frac{n^2}{2}\,.
\end{equation}
\subsection*{Useful societies}
Let $B_1, \ldots, B_{|\ccC|}$ be the vertex sets
of the paths belonging to $\ccC$ and fix an arbitrary partition
\[
	U	=
	B_{|\ccC|+1}\dcup \ldots \dcup B_{\nu}\dcup B'\,,
\]
with
\[
	|B_{|\ccC|+1}|=\dots=|B_\nu|=M\qqand |B'|<M\,.
\]

The sets belonging to \[
	\ccB=\{B_1, \ldots, B_\nu\}
\]
will be referred to as {\it blocks}. The size of their union
\begin{equation}\label{eq:intro-B}
	B=B_1\dcup B_2\dcup \ldots \dcup B_\nu,
\end{equation}
 in view of candidacy property~\ref{it:C2}, can be bounded from below by
\[
	|B|=M\nu\ge n-|V(P_A)|-|\cR|-|\ccC|-|B'|\ge (1-\theta_*-\theta_*^2)n-\nu-M,
\]
where we used bounds on $|V(P_A)|$ and $|\cR|$ from Propositions~\ref{prop:absorbingP} and~\ref{prop:reservoir}.
Thus, observing that $\nu\leq n/M$ and recalling that $\theta_*\gg M^{-1}$, we obtain
\begin{equation}\label{eq:BBBB}
	|B|\ge (1-2\theta_*)n\,.
\end{equation}
Consequently, by~\eqref{eq:eta-v} and \eqref{eq:eta-e}, recalling that $\alpha\gg \theta_*$, for
every $u\in U\setminus \Ubad$
\begin{equation}\label{eq:elim-B}
	\frac{|V\big(\overline{R}_u\big)\cap B|}{M\nu}\le \frac{\eta_u}{1-2\theta_*}\le\eta_u+\frac{\alpha}{36}
	\qqand
	\frac{e_{\overline{R}_u}(B)}{M^2\binom{\nu}{2}}\ge
	\frac59+\frac{2\alpha}9-(1-\eta_u)^2,\end{equation}
where $e_G(A)$ stands for the number of edges in $G[A]$, the subgraph of $G$ induced by a subset of vertices $A\subseteq V(G)$.

A {\it society} is a set of $m$ blocks,
where
\begin{equation}\label{eq:m}
	m=1+\left\lceil\frac{36}{\alpha}\right\rceil\,.
\end{equation}
The collection of all $\binom{\nu}{m}$ societies will be denoted by $\gS$.

\begin{dfn}\label{dfn:use}
A society $\cS\in\gS$ is said to be {\it useful} for a vertex $u\in U\setminus \Ubad$
if for its union $S=\bigcup \cS$ and the real number $\tau$ defined by
$|S\cap V(\overline{R}_u)|=\tau |S|$,
\[
	e_{\overline{R}_u}\big(S\cap V(\overline{R}_u)\big)
	\ge
	\left(\frac59+\frac{\alpha}{9}-(1-\eta_u)(1+\eta_u-2\tau)\right)\frac{|S|^2}{2}\,.
\]
\end{dfn}

The following claim may explain the terminology used in Definition~\ref{dfn:use}.

\begin{claim}\label{clm:use-of-usefullness}
If a society $\cS\in\gS$ is useful for a vertex $u\in U\setminus \Ubad$, then
the graph $\overline{R}_u$ contains a graph  path on $\tfrac23(M+1)(m+6)$
vertices all of which belong to $S=\bigcup \cS$.
\end{claim}

\begin{proof}
Notice that by~\eqref{eq:M} the number $\tfrac23(m+6)(M+1)$ is indeed an integer.
Since
$\alpha m\ge 36+\alpha$, it follows from $\alpha\gg M^{-1}$ and~\eqref{eq:m} that
\[
	\frac{6(m+6)}{\alpha m-36}\le M\,,
\]
whence
\[
	\frac 23(M+1)(m+6)
	\le
	\frac 23 M\bigl((m+6)+(\alpha m-36)/6\bigr)
	=
	\left(\frac23 +\frac \alpha 9\right)Mm\,.
\]
Thus it suffices to find a path in the graph~$\overline{R}_u$ traversing
$\bigl(\tfrac23+\tfrac\alpha 9\bigr)Mm$ vertices all of which belong
to $S=\bigcup \cS$.
Let us define a
real number~$\rho$ by
\[
	e_{\overline{R}_u}\big(S\cap V(\overline{R}_u)\big)
	=
	\rho\frac{(Mm)^2}2
	=
	\rho\frac{|S|^2}2
	\,.
\]
Clearly $\tau^2\ge\rho$ and the definitions of $\tau$ and $\rho$ yield
\begin{equation}\label{eq:rhotau}
	e_{\overline{R}_u}\big(S\cap V(\overline{R}_u)\big)
	=
	\frac{\rho}{\tau^2}\cdot\frac{|S\cap V(\overline{R}_u)|^2}{2}\,.
\end{equation}
Moreover, the usefulness of $\cS$ implies
\begin{equation}\label{eq:tau2}
	\tau^2\ge \rho\ge \frac59+\frac{\alpha}{9}-(1-\eta_u)(1+\eta_u-2\tau)\,,
\end{equation}
which may be reformulated as
\begin{equation}\label{eq:tau3}
	\tau\left(\tau-\frac23\right)\ge \frac{\alpha}{9}
	+\left(\eta_u-\frac23\right)\left(\eta_u+\frac23-2\tau\right)\,.
\end{equation}
We have defined $\eta_u$ in \eqref{eq:eta-v} so that
$\eta_u\ge \tfrac23+\tfrac\alpha 2$. Consequently,
if $\tau< \tfrac23+\tfrac\alpha 9$, then
\[
	\frac{\alpha}{9}+\left(\eta_u-\frac23\right)\left(\eta_u+\frac23-2\tau\right)
	> \frac{\alpha}{9} > \tau\left(\tau-\frac23\right)\,,
\]
a contradiction with (\ref{eq:tau3}).
This proves that
\begin{equation}\label{eq:tau-big}
	\tau\ge \frac23+\frac\alpha 9\,.
\end{equation}
The right-hand side of~\eqref{eq:tau2} rewrites as
$\tfrac59+\tfrac{\alpha}{9}-(1-\tau)^2+(\eta_u-\tau)^2$
and for this reason we have
\begin{equation}\label{eq:tau-lessbig}
	\rho\ge \frac59+\frac{\alpha}{9}-(1-\tau)^2\,. 	
\end{equation}
Owing to~\eqref{eq:tau-big}, we deduce from Theorem~\ref{thm:FS} (applied with $\lambda=(2/3+\alpha/9)/\tau$
to the induced subgraph of $\overline{R}_u$ on the set $S\cap V(\overline{R}_u)$ of size $\tau|S|=\tau mM$) that the failure of our claim would imply
\[
	\frac{\rho}{\tau^2}\cdot\frac{|S\cap V(\overline{R}_u)|^2}{2}
	\overset{\eqref{eq:rhotau}}{=}
	e_{\overline{R}_u}\big(S\cap V(\overline{R}_u)\big)
	\leq
	\left(\left(\frac{\frac{2}{3}+\frac{\alpha}{9}}{\tau}\right)^2+\left(1-\frac{\frac{2}{3}+\frac{\alpha}{9}}{\tau}\right)^2\right)\frac{|S\cap V(\overline{R}_u)|^2}{2}	\,.
\]
Consequently, we arrive at
\[
	\left(\frac23+\frac\alpha 9\right)^2+\left(\tau-\frac23-\frac\alpha 9\right)^2
	\ge \rho
	\overset{\eqref{eq:tau-lessbig}}{\ge}
	\frac59+\frac{\alpha}{9}-(1-\tau)^2\,,
\]
whence
\[
	\left(\frac23+\frac\alpha 9\right)^2+\left(\frac13-\frac\alpha 9\right)^2
	\ge
	\frac59+\frac{\alpha}{9}\,,
\]
i.e.,
\[
	\frac{2}{27}\alpha+\frac{2}{81}\alpha^2\ge \frac 19\alpha\,,
\]
contrary to $1\gg\alpha$. This completes the proof of Claim \ref{clm:use-of-usefullness}.
\end{proof}

A counting argument shows that there exists a society that is useful for many vertices.
\begin{claim}\label{claim:cS'}
There is a society $\cS'\in\gS$
useful for at least $\tfrac{\alpha}{18}|U\setminus \Ubad|$ vertices $u\in U\setminus \Ubad$.
\end{claim}

\begin{proof}
The claim follows by double counting from the
assertion that for every $u\in U\setminus \Ubad$ the number of useful societies
is at least~$\tfrac{\alpha}{18}|\gS|$, which we verify below. For that consider a vertex $u\in U\setminus \Ubad$.
Suppose that $\gamma\binom{\nu}{m}$ of all $|\gS|=\binom{\nu}{m}$ societies are useful for $u$.
For $i\in[\nu]$ set
\[
	|B_i\cap V(\overline{R}_u)|=\tau_i M
\]
and for all $i$ and $j$ with $1\le i<j\le\nu$ set
\[
	e_{\overline{R}_u}\big(B_i\cap V(\overline{R}_u), B_j\cap V(\overline{R}_u)\big)
	=\rho_{ij} M^2\,.
\]
By Definition~\ref{dfn:use}, if the society $\cS=\{B_1, \ldots, B_m\}$ is not useful for $u$,
then
\[
	\sum_{1\le i<j\le m}\rho_{ij}
	\leq
	\frac{e_{\overline{R}_u}(S\cap V(\overline{R}_u))}{M^2}
	<
	\left(\frac59+\frac{\alpha}{9}-(1-\eta_u^2)\right)\frac{m^2}2
	+(1-\eta_u)m\sum_{1\le i\le m}\tau_i\,.
\]
If $\cS$ is useful we still have the trivial bound
\[
	\sum_{1\le i<j\le m}\rho_{ij}\le \binom{m}{2}\,.
\]
Summing over all societies we infer
\begin{multline*}
	\binom{\nu-2}{m-2}\sum_{1\le i<j\le \nu}\rho_{ij}\le
 	\gamma\binom{\nu}{m}\binom{m}{2}
	 +\binom{\nu}{m}\left(\frac59+\frac{\alpha}{9}-(1-\eta_u^2)\right)\frac{m^2}2 \\
 	 +(1-\eta_u)m\binom{\nu-1}{m-1}\sum_{1\le i\le \nu}\tau_i\,.
\end{multline*}
Dividing by $\binom{\nu-2}{m-2}\binom{\nu}{2}=\binom{\nu}{m}\binom{m}{2}$ one learns
that the set $B$ introduced in~\eqref{eq:intro-B} satisfies
\[
	\frac{e_{\overline{R}_u}(B)-\sum_{1\le i\le \nu}e_{\overline{R}_u}(B_i)}{M^2\binom{\nu}{2}}
	\le \gamma+\frac{m}{m-1}\left(\frac59+\frac{\alpha}{9}-(1-\eta_u^2)\right)
	+2(1-\eta_u)\frac m{m-1}\frac{|V(\overline{R}_u)\cap B|}{M\nu}\,.
\]
Owing to
\[
	\frac{\sum_{1\le i\le \nu}e_{\overline{R}_u}(B_i)}{M^2\binom{\nu}{2}}
	\le
	\frac{\nu\binom{M}{2}}{M^2\binom{\nu}{2}}
	\le
	\frac{1}{\nu-1}
	\le
	\frac{2}{\nu}
	=
	\frac{2M}{|B|}
	\overset{\eqref{eq:BBBB}}{\le}
	\frac{3M}{n}
	\le
	\frac{\alpha}{108}
\]
and~\eqref{eq:elim-B} this yields
\[
	\frac59+\frac{2\alpha}{9}-(1-\eta_u)^2
	\le
	\gamma+\frac\alpha{108}
	+\left(1+\frac{1}{m-1}\right)\left(\frac59+\frac{\alpha}{9}-(1-\eta_u^2)
	+2(1-\eta_u)\left(\eta_u+\frac\alpha{36}\right)\right)\,,
\]
whence
\[
	\frac{\alpha}{9}\le \gamma+\frac\alpha{108}+\frac{\alpha}{18}(1-\eta_u)+\frac{1}{m-1}\,.
\]
Hence, the choice of $m$ in~\eqref{eq:m} and the bound $\eta_u>\tfrac23$ yield indeed that
$\gamma\ge\tfrac{\alpha}{18}$.
\end{proof}
For the rest of the proof let $\cS'$ from Claim~\ref{claim:cS'} be fixed.
By~\eqref{eq:N1-small} and the purported falsity of~\eqref{eq:U-small}
this means that the set
\[
	U'=\{u\in U\setminus \Ubad\colon \cS' \text{ is useful for } u\}
\]
satisfies $|U'|\ge \tfrac{\alpha\theta_*^2}{36}n$. Now we apply
Claim~\ref{clm:use-of-usefullness} to each $u\in U'$. Each time
the outcome may be regarded as a sequence of $\tfrac23(M+1)(m+6)$ distinct vertices
from the set $S'=\bigcup \cS'$.
Due to $|S'|=Mm$ there are no more than $(Mm)!$ such sequences and thus there is a set
$U''\subseteq U'$ with
\[
	|U''|\ge  \frac{\alpha\theta_*^2n}{36 (Mm)!}\ge \frac13(M+1)(m+6)-1
\]
such that all graphs $\overline{R}_u$ with $u\in U''$ contain a common path $W$
on $\tfrac23(M+1)(m+6)$ vertices.

\subsection*{Augmenting \texorpdfstring{$Q$}{\it Q}} Using the vertices of $W$ and $\tfrac13(M+1)(m+6)-1$
arbitrary vertices from $U''$ we obtain a tight path $T\subseteq (\wh{H}-\cR)$
with $|V(T)|=(M+1)(m+6)-1$ and every vertex of $T$
with a position divisible by 3 is a vertex from $U''$ (see Figure~\ref{fig:picT}).

\begin{figure}[ht]
\centering
\begin{tikzpicture}[scale=1.19]

	\foreach \i in {1,...,8}{
		\coordinate (x\i) at (235+5*\i:13);
		\begin{pgfonlayer}{front}
			\fill  (x\i) circle (2pt);
		\end{pgfonlayer}
	}
	\foreach \i in {9,...,12}{
		\coordinate (x\i) at (240+5*\i:13);
		\begin{pgfonlayer}{front}
			\fill  (x\i) circle (2pt);
		\end{pgfonlayer}
	}
	
	\coordinate (y1) at (-4,-10.2);
	\coordinate (y2) at (-2,-10.2);
	\coordinate (y3) at (0,-10.2);
	\coordinate (y4) at (4,-10.2);
	
	\begin{pgfonlayer}{front}
		\foreach \i in {1,...,4}{
			\draw[blue!75!black, very thick]  (y\i) circle (2pt);
			\fill[blue!75!white]  (y\i) circle (2pt);
		}	
	\end{pgfonlayer}
	
	\draw[green, line width=2pt]
		(x1) \foreach \i in {2,...,8}{ -- (x\i)};
	\draw[green!70!black, line width=2pt, decorate, decoration={snake,segment length=7,amplitude=4, pre length=3pt, post length=2pt}]
		(x8) -- (x9);
	\draw[green, line width=2pt]
		(x9) \foreach \i in {10,...,12}{ -- (x\i)};
		
	\draw[blue!75!black, line width=2pt] (0,-10.2) ellipse (4.7cm and 18pt);
	\fill[blue!75!white,opacity=0.2] (0,-10.2) ellipse (4.7cm and 18pt);
	
	\node at (2,-10.2) {\large\textcolor{blue!75!black}{$U''$}};
	
	\node at (-6.4,-10.6) {\large\textcolor{red!70!black}{$T$}};
	\node at (-6.4,-12.1) {\large\textcolor{green!70!black}{$W$}};
		
	\qedge{(y1)}{(x2)}{(x1)}{4.5pt}{1.5pt}{red!70!black}{red!70!black,opacity=0.2};
	\qedge{(y1)}{(x3)}{(x2)}{4.5pt}{1.5pt}{red!70!black}{red!70!black,opacity=0.2};
	\qedge{(y1)}{(x4)}{(x3)}{4.5pt}{1.5pt}{red!70!black}{red!70!black,opacity=0.2};
	
	\qedge{(y2)}{(x4)}{(x3)}{4.5pt}{1.5pt}{red!70!black}{red!70!black,opacity=0.2};
	\qedge{(y2)}{(x5)}{(x4)}{4.5pt}{1.5pt}{red!70!black}{red!70!black,opacity=0.2};
	\qedge{(y2)}{(x6)}{(x5)}{4.5pt}{1.5pt}{red!70!black}{red!70!black,opacity=0.2};
	
	\qedge{(y3)}{(x6)}{(x5)}{4.5pt}{1.5pt}{red!70!black}{red!70!black,opacity=0.2};
	\qedge{(y3)}{(x7)}{(x6)}{4.5pt}{1.5pt}{red!70!black}{red!70!black,opacity=0.2};
	\qedge{(y3)}{(x8)}{(x7)}{4.5pt}{1.5pt}{red!70!black}{red!70!black,opacity=0.2};
	
	\qedge{(y4)}{(x10)}{(x9)}{4.5pt}{1.5pt}{red!70!black}{red!70!black,opacity=0.2};
	\qedge{(y4)}{(x11)}{(x10)}{4.5pt}{1.5pt}{red!70!black}{red!70!black,opacity=0.2};
	\qedge{(y4)}{(x12)}{(x11)}{4.5pt}{1.5pt}{red!70!black}{red!70!black,opacity=0.2};

\end{tikzpicture}
\caption{Tight \textcolor{red!60!black}{path $T$} on the graph path \textcolor{green!60!black}{$W$} of
\textcolor{green!60!black}{$\zeta_{**}$-connectable pairs}.}
\label{fig:picT}
\end{figure}
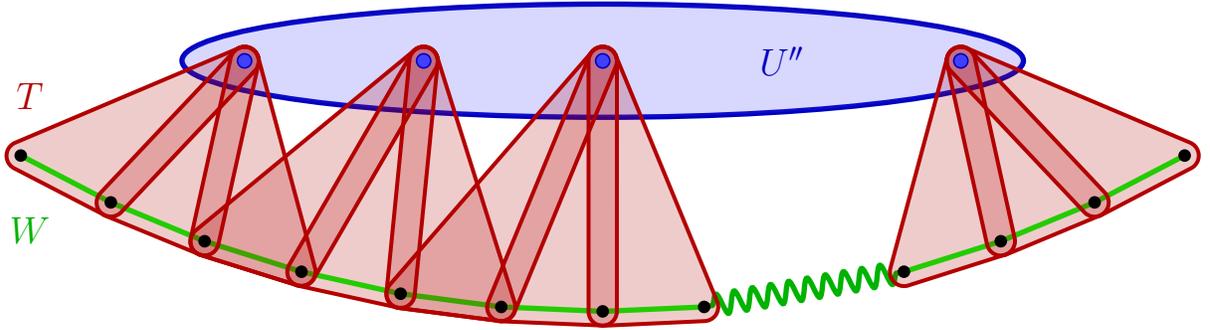

Next we split the path $T$ into $(m+6)$ tight paths $P_1,\dots,P_{m+6}$ on $M$ vertices each  such that
$T=P_1x_1P_2x_2\dots x_{m+5}P_{m+6}$ for some $x_1, \ldots, x_{m+5}\in V$.
In fact, owing to~\eqref{eq:M} the vertices $x_1, \ldots, x_{m+5}$ have a position divisible by 3
on the path $T$ and, therefore, they belong to $U''$.
Consequently, the end-pairs of the
paths $P_1, \ldots, P_{m+6}$ consist of consecutive vertices from $W$ and, hence, they
are $\zeta_{**}$-connectable. (This is the reason why we passed from the graphs $R_u$ to
the graphs $\overline{R}_u$ in the beginning of the argument.) In other words,
\[
	P_1, \ldots, P_{m+6}\in\ccP\,.
\]
Now let $\ccC^-$ be the collection of those paths from $\ccC$ whose vertex sets
belong to the society~$\cS'$, i.e., the paths from $\ccC^-\subseteq \ccC$ are blocks from the society $\cS'$,
and put
\[
	\ccC^0=(\ccC\setminus \ccC^-)\cup \{P_1, \ldots, P_{m+6}\}\,.
\]
Since $|\ccC^-|\le |\cS'|=m$, we have $|\ccC^0|\ge |\ccC|+6$ and thus to derive the
desired contradiction it is enough to construct a path $Q^0$ such that $(\ccC^0, Q^0)$
is a candidate. The idea for doing so is to take the subpaths into which the removal
of~$\ccC^-$ splits $Q$ as well as the path $T$ and to connect all of them by means of
Lemma~\ref{lem:use-reservoir}. Of course we may also need to remove several vertices
of the type mentioned in condition~\ref{it:C2}\,\ref{it:C2i} and in case $\ccC^-$
should contain the initial or terminal part of $Q$ we might also need to disregard some
$\cR$-vertices in order to achieve that $Q^0$ satisfies~\ref{it:C3}. The things that remain
to be checked are
\begin{enumerate}[label=\nlabel]
\item\label{it:cand:1} that we still have enough space in the reservoir to create the desired connections by applications of Lemma~\ref{lem:use-reservoir}
\item\label{it:cand:2} and that the new pair $(\ccC^0, Q^0)$ will again obey condition~\ref{it:C4}.
\end{enumerate}

Since $|\ccC^-|\le m$ at most $m+1$ successive
applications of Lemma~\ref{lem:use-reservoir} are required to connect all
pieces for building~$Q^0$.
Since $(\ccC, Q)$ satisfies~\ref{it:C4}, we know
\[
	|V(Q)\cap \cR|\le \frac{19\ell|\ccC|}{\alpha}\le\frac{19\ell n}{\alpha M}
	\le \theta_{**}^2n-(3\ell+1)m
\]
and, hence, there arises no problem with~\ref{it:cand:1}.

Utilising the same condition~\ref{it:C4}
more carefully we obtain
\[
	|V(Q^0)\cap \cR|\le |V(Q)\cap \cR|+(3\ell+1)(m+1)\leq \frac{19\ell|\ccC|}{\alpha}+(3\ell+1)(m+1)\,.
\]
So our choice of $m$ in~\eqref{eq:m} and $1\gg \alpha, \ell^{-1}$
lead to
\[
	|V(Q^0)\cap \cR|\le \frac{19\ell|\ccC|}{\alpha}+(3\ell+1)\left(\frac{36}{\alpha}+3\right)
	\le
	\frac{19\ell}{\alpha}(|\ccC|+6) \,.
\]
In the light of $|\ccC^0|\ge |\ccC|+6$ this shows that $(\ccC^0, Q^0)$ obeys condition~\ref{it:C4} and, hence, it
is indeed a candidate. As it contradicts the maximality of $(\ccC^0, Q^0)$
we have thereby proved the validity of~\eqref{eq:U-small} and as said above
Lemma~\ref{lem:long} is thereby proved as well.
\end{proof}

\begin{bibdiv}
\begin{biblist}

\bib{berge}{book}{
   author={Berge, Claude},
   title={Graphs and hypergraphs},
   note={Translated from the French by Edward Minieka;
   North-Holland Mathematical Library, Vol. 6},
   publisher={North-Holland Publishing Co., Amsterdam-London; American
   Elsevier Publishing Co., Inc., New York},
   date={1973},
   pages={xiv+528},
   review={\MR{0357172 (50 \#9640)}},
}

\bib{Bermond}{article}{
   author={Bermond, J.-C.},
   author={Germa, A.},
   author={Heydemann, M.-C.},
   author={Sotteau, D.},
   title={Hypergraphes hamiltoniens},
   language={French, with English summary},
   conference={
      title={Probl\`emes combinatoires et th\'eorie des graphes},
      address={Colloq. Internat. CNRS, Univ. Orsay, Orsay},
      date={1976},
   },
   book={
      series={Colloq. Internat. CNRS},
      volume={260},
      publisher={CNRS, Paris},
   },
   date={1978},
   pages={39--43},
   review={\MR{539937 (80j:05093)}},
}

\bib{BHS}{article}{
   author={Bu{\ss{}}, Enno},
   author={H{\`a}n, Hi{\d{\^e}}p},
   author={Schacht, Mathias},
   title={Minimum vertex degree conditions for loose Hamilton cycles in
   3-uniform hypergraphs},
   journal={J. Combin. Theory Ser. B},
   volume={103},
   date={2013},
   number={6},
   pages={658--678},
   issn={0095-8956},
   review={\MR{3127586}},
   doi={10.1016/j.jctb.2013.07.004},
}

\bib{BR}{article}{
   author={Blakley, G. R.},
   author={Roy, Prabir},
   title={A H\"older type inequality for symmetric matrices with nonnegative
   entries},
   journal={Proc. Amer. Math. Soc.},
   volume={16},
   date={1965},
   pages={1244--1245},
   issn={0002-9939},
   review={\MR{0184950}},
}

\bib{CoMy}{article}{
   author={Cooley, Oliver},
   author={Mycroft, Richard},
   title={The minimum vertex degree for an almost-spanning tight cycle in a
   3-uniform hypergraph},
   journal={Discrete Math.},
   volume={340},
   date={2017},
   number={6},
   pages={1172--1179},
   issn={0012-365X},
   review={\MR{3624602}},
}

\bib{CM}{article}{
   author={Czygrinow, Andrzej},
   author={Molla, Theodore},
   title={Tight codegree condition for the existence of loose Hamilton
   cycles in 3-graphs},
   journal={SIAM J. Discrete Math.},
   volume={28},
   date={2014},
   number={1},
   pages={67--76},
   issn={0895-4801},
   review={\MR{3150175}},
   doi={10.1137/120890417},
}

\bib{dirac}{article}{
   author={Dirac, G. A.},
   title={Some theorems on abstract graphs},
   journal={Proc. London Math. Soc. (3)},
   volume={2},
   date={1952},
   pages={69--81},
   issn={0024-6115},
   review={\MR{0047308 (13,856e)}},
}

\bib{ErGa59}{article}{
   author={Erd{\H{o}}s, P.},
   author={Gallai, T.},
   title={On maximal paths and circuits of graphs},
   language={English, with Russian summary},
   journal={Acta Math. Acad. Sci. Hungar},
   volume={10},
   date={1959},
   pages={337--356},
   issn={0001-5954},
   review={\MR{0114772}},
}

\bib{ES82}{article}{
   author={Erd{\H{o}}s, P.},
   author={Simonovits, M.},
   title={Cube-supersaturated graphs and related problems},
   conference={
      title={Progress in graph theory},
      address={Waterloo, Ont.},
      date={1982},
   },
   book={
      publisher={Academic Press, Toronto, ON},
   },
   date={1984},
   pages={203--218},
   review={\MR{776802}},
}

\bib{FS75}{article}{
   author={Faudree, R. J.},
   author={Schelp, R. H.},
   title={Path Ramsey numbers in multicolorings},
   journal={J. Combinatorial Theory Ser. B},
   volume={19},
   date={1975},
   number={2},
   pages={150--160},
   review={\MR{0412023}},
}

\bib{GPW}{article}{
   author={Glebov, Roman},
   author={Person, Yury},
   author={Weps, Wilma},
   title={On extremal hypergraphs for Hamiltonian cycles},
   journal={European J. Combin.},
   volume={33},
   date={2012},
   number={4},
   pages={544--555},
   issn={0195-6698},
   review={\MR{2864440}},
   doi={10.1016/j.ejc.2011.10.003},
}

\bib{YiJie}{article}{
   author={Han, Jie},
   author={Zhao, Yi},
   title={Minimum vertex degree threshold for loose Hamilton cycles in
   3-uniform hypergraphs},
   journal={J. Combin. Theory Ser. B},
   volume={114},
   date={2015},
   pages={70--96},
   issn={0095-8956},
   review={\MR{3354291}},
   doi={10.1016/j.jctb.2015.03.007},
}

\bib{JLR00}{book}{
   author={Janson, Svante},
   author={{\L}uczak, Tomasz},
   author={Ruci{\'n}ski, Andrzej},
   title={Random graphs},
   series={Wiley-Interscience Series in Discrete Mathematics and
   Optimization},
   publisher={Wiley-Interscience, New York},
   date={2000},
   pages={xii+333},
   isbn={0-471-17541-2},
   review={\MR{1782847}},
   doi={10.1002/9781118032718},
}

\bib{KK}{article}{
   author={Katona, Gy. Y.},
   author={Kierstead, H. A.},
   title={Hamiltonian chains in hypergraphs},
   journal={J. Graph Theory},
   volume={30},
   date={1999},
   number={3},
   pages={205--212},
   issn={0364-9024},
   review={\MR{1671170 (99k:05124)}},
   doi={10.1002/(SICI)1097-0118(199903)30:3<205::AID-JGT5>3.3.CO;2-F},
}

\bib{KO}{article}{
   author={K{\"u}hn, Daniela},
   author={Osthus, Deryk},
   title={Loose Hamilton cycles in 3-uniform hypergraphs of high minimum
   degree},
   journal={J. Combin. Theory Ser. B},
   volume={96},
   date={2006},
   number={6},
   pages={767--821},
   issn={0095-8956},
   review={\MR{2274077 (2007h:05115)}},
   doi={10.1016/j.jctb.2006.02.004},
}

\bib{sur}{article}{
   author={R{\"o}dl, Vojt{\v{e}}ch},
   author={Ruci{\'n}ski, Andrzej},
   title={Dirac-type questions for hypergraphs---a survey (or more problems
   for Endre to solve)},
   conference={
      title={An irregular mind},
   },
   book={
      series={Bolyai Soc. Math. Stud.},
      volume={21},
      publisher={J\'anos Bolyai Math. Soc., Budapest},
   },
   date={2010},
   pages={561--590},
   review={\MR{2815614 (2012j:05008)}},
   doi={10.1007/978-3-642-14444-8\_16},
}

\bib{1112}{article}{
   author={R{\"o}dl, Vojt{\v{e}}ch},
   author={Ruci{\'n}ski, Andrzej},
   title={Families of triples with high minimum degree are Hamiltonian},
   journal={Discuss. Math. Graph Theory},
   volume={34},
   date={2014},
   number={2},
   pages={361--381},
   issn={1234-3099},
   review={\MR{3194042}},
   doi={10.7151/dmgt.1743},
}

\bib{RRSSz}{article}{
   author={R\"odl, Vojt\v ech},
   author={Ruci\'nski, Andrzej},
   author={Schacht, Mathias},
   author={Szemer\'edi, Endre},
   title={On the Hamiltonicity of triple systems with high minimum degree},
   journal={Ann. Comb.},
   volume={21},
   date={2017},
   number={1},
   pages={95--117},
   issn={0218-0006},
   review={\MR{3613447}},
}

\bib{rrs3}{article}{
   author={R{\"o}dl, Vojt{\v{e}}ch},
   author={Ruci{\'n}ski, Andrzej},
   author={Szemer{\'e}di, Endre},
   title={A Dirac-type theorem for 3-uniform hypergraphs},
   journal={Combin. Probab. Comput.},
   volume={15},
   date={2006},
   number={1-2},
   pages={229--251},
   issn={0963-5483},
   review={\MR{2195584 (2006j:05144)}},
   doi={10.1017/S0963548305007042},
}

\bib{3}{article}{
   author={R{\"o}dl, Vojt{\v{e}}ch},
   author={Ruci{\'n}ski, Andrzej},
   author={Szemer{\'e}di, Endre},
   title={Dirac-type conditions for Hamiltonian paths and cycles in
   3-uniform hypergraphs},
   journal={Adv. Math.},
   volume={227},
   date={2011},
   number={3},
   pages={1225--1299},
   issn={0001-8708},
   review={\MR{2799606 (2012d:05213)}},
   doi={10.1016/j.aim.2011.03.007},
}

\bib{Si84}{article}{
   author={Sidorenko, A. F.},
   title={Extremal problems in graph theory and functional-analytic
   inequalities},
   language={Russian},
   conference={
      title={Proceedings of the All-Union seminar on discrete mathematics
      and its applications (Russian)},
      address={Moscow},
      date={1984},
   },
   book={
      publisher={Moskov. Gos. Univ., Mekh.-Mat. Fak., Moscow},
   },
   date={1986},
   pages={99--105},
   review={\MR{930303}},
}

\bib{Zhao-sur}{article}{
   author={Zhao, Yi},
   title={Recent advances on Dirac-type problems for hypergraphs},
   conference={
      title={Recent trends in combinatorics},
   },
   book={
      series={IMA Vol. Math. Appl.},
      volume={159},
      publisher={Springer, [Cham]},
   },
   date={2016},
   pages={145--165},
   review={\MR{3526407}},
}

\end{biblist}
\end{bibdiv}

\end{document}